\def\theequation{\@arabic{\c@section}.\@arabic{\c@equation}}
\newtheorem{theorem}{Theorem}[section] 
\newtheorem{lemma}{Lemma}[section] 
\newtheorem{proposition}{Proposition}[section]
\newtheorem*{remark}{Remark}
\newcommand{\e}{\epsilon}
\newcommand{\al} {\alpha}
\newcommand{\ga} {\gamma}
\newcommand{\om} {\Omega}
\newcommand{\la} {\lambda}
\newcommand{\noi} {\noindent}
\newcommand{\na} {\nabla}
\newcommand{\ds} {\displaystyle}
\newcommand{\ra} {\rightarrow}
\newcommand{\real}{\mathbb{R}}
\newcommand{\rnn}{\mathbb{R}^{N}}
\newcommand{\lv}{\lVert}
\newcommand{\rv}{\rVert}
\newcommand{\weak}{\rightharpoonup}
\newcommand{\sobo}{W_0^{1,p}}
\newcommand{\grad}{\nabla}
\newcommand{\ntrl}{\mathbb{N}}
\newcommand{\plap}{\Delta_p}
\newcommand{\pfrac}{(-\Delta_p)^s}
\newcommand{\rnnn}{\int_{\mathbb{R}^N}\int_{\mathbb{R}^N}}
\newcommand{\sobop}{W_0^{1,p}}
\newcommand{\fucik}{Fu\v{c}\'{\i}k } 
\title{On the eigenvalues and Fu\v{c}\'{\i}k spectrum of $p$-Laplace local and nonlocal operator with mixed interpolated Hardy term}
\author{Shammi Malhotra\footnote{Department of Mathematics, Indian Institute of Technology Delhi, Hauz Khas New Delhi 110016,  India, maz228085@maths.iitd.ac.in}, Sarika Goyal\footnote{Department of Mathematics, Netaji Subhas University of Technology,
		Dwarka Sector-3, Dwarka, Delhi, 110078, India, sarika1.iitd@gmail.com, sarika@nsut.ac.in},  and K. Sreenadh\footnote{Department of Mathematics, Indian Institute of Technology Delhi, Hauz Khas New Delhi 110016,  India, sreenadh@maths.iitd.ac.in}}
\date{}
\begin{document}
\maketitle
\begin{abstract}
\noi In this article, we are concerned with the eigenvalue problem driven by the mixed local and nonlocal $p$-Laplacian operator having the  interpolated Hardy term
\begin{equation*}
\mathcal{T}(u) :=- \Delta_p u + (- \Delta_p)^s u - \mu \frac{|u|^{p-2}u}{|x|^{p \theta}} 
\end{equation*}
where $0<s<1<p<N$, $\theta \in [s,1]$, and $\mu \in (0,\mu_0(\theta))$. First, we establish a mixed interpolated Hardy inequality and then show the existence of eigenvalues and their properties. We also investigate the Fu\v{c}\'{\i}k spectrum, the existence of the first nontrivial curve in the Fu\v{c}\'{\i}k spectrum, and prove some of its properties.
Moreover, we study the shape optimization of the domain with respect to the first two eigenvalues, the regularity of the eigenfunctions, the Faber-Krahn inequality, and a variational characterization of the second eigenvalue. 
\medskip

\noindent \textbf{Key words:} Fu\v{c}\'{\i}k Spectrum, Mixed Operator, Hardy Potential, Eigenvalue Problem.\\
\noindent \textbf{2020 MSC: 35J20, 35J75, 35J92, 35A16, 35B65, 35P30}
\end{abstract}

\section{Introduction}
In this paper, we are interested in the eigenvalue problem of the mixed local-nonlocal operator with an interpolated Hardy potential term. More precisely, we are concerned with all $\la \in \real$ such that the following equation has a nontrivial solution $u$ 
\begin{equation}\label{spectrum_equation}
\mathcal{T}(u)  = \lambda u^{p-1}  \mbox{ on } \om \mbox{,  }
    u  = 0  \mbox{ on } \rnn\setminus\om,    
\end{equation}
where $\mathcal{T}(u) := - \plap u + \pfrac u - \mu \frac{|u|^{p-2}u}{|x|^{p \theta}}$, $0<s<1<p<N$, $\plap u:= div(|\grad u|^{p-2} \grad u ) $ and fractional $p$-Laplacian is defined as $$ \pfrac u(x) := - 2 \lim_{\e \to 0} \int_{\rnn \setminus B_{\e}(x)} \frac{|u(x) - u(y)|^{p-2}(u(x) - u(y))}{|x-y|^{N+ps}} ~dy $$
for all $x \in \rnn$ and $\theta \in [s,1]$,  $\mu \in (0,\mu_0(\theta))$ with
\begin{equation*}
\mu_0(\theta) = 
\begin{cases}
    C_{N,p,s} &  \mbox{ if } \theta  =s, \\
    \min\left\{ \frac{C_H(1-s)}{\theta - s}, \frac{C_{N,p,s}(1-s)}{1 - \theta} \right\} & \mbox{ if } \theta \in (s,1),\\
    C_H & \mbox{ if } \theta = 1, 
\end{cases}
\end{equation*}
where $C_H$ and $C_{N,p,s}$ are defined in \eqref{hardy_inequality} and \eqref{fractional_hardy_inequality} respectively. \\
Once the eigenvalue problem and its properties are studied, we further explore the first nontrivial curve in the \fucik spectrum. The \fucik spectrum  is defined as the set of all $(\al,\beta) \in \real^2$ such that there exists a nontrivial solution to the following problem 
\begin{equation}\label{pq}
\begin{cases}
      \mathcal{T}(u)  = \alpha (u^+)^{p-1} - \beta (u^-)^{p-1} & \mbox{ in }  \om  \\
    u  = 0  &\mbox{ in } \rnn \setminus \om ,
\end{cases}
\end{equation}

\noi  This concept was introduced by \fucik and Dancer while studying problems with jumping nonlinearities in one dimension (see \cite{fucik_book_introduction}). The higher-dimensional counterpart of the first nontrivial curve of the \fucik spectrum of the Laplacian operator with Dirichlet boundary conditions was investigated in \cite{figueiredo_first_non_trivial_curve}. Subsequently, the \fucik spectrum for the $p$-Laplacian operator with Dirichlet boundary conditions was studied in \cite{Cuesta1999fucik}. This line of research has since expanded to include the \fucik spectrum of both Laplacian and $p$-Laplacian operators under Neumann and Robin boundary conditions (see \cite{patrick_different_boundary, greta_robin_eigenvalues} and references therein). Additionally, eigenvalue problems involving singular weights in the equation, as well as those incorporating a Kirchhoff-type weight with the operator, have been explored (see \cite{ahmed_singular_weights, shibata_Kirchhoff}). Moreover, limiting eigenvalue problems as $p \to \infty$ have also been studied (see \cite{amato_limiting_p_eigenvalue}). The eigenvalue problem for the nonlocal $p$-Laplacian operator is first studied for $p \geq 2$ in \cite{lindqvist_fractional_eigenvalues_pgeq2}, and eventually generalized for all $p > 1$ in \cite{franzina_fractional_p_ev}. The \fucik spectrum of the fractional $p$-Laplacian was subsequently explored in \cite{sarika_fucik_fractional_laplacian, kanishka_fucik_fractional_p_laplacian}.\\

\noindent In the recent years, significant attention has been given to the study of the combination of local and nonlocal operators. Both local and nonlocal factors influence many physical phenomena such as instrumental in describing diverse processes like diffusion of biological populations in ecological niches (see \cite{ valdinoci_logistic_equation, valdinoci_ecological_niche, pellacci_logistic_equation}), and anisotropic heat transport in reversed shear magnetic fields \cite{blazevski_anisotropic_heat_transport}. It also helps in investigation of optimal dispersal strategies in spatially heterogeneous environments \cite{pellacci_best_dispersal_strategies}. Consequently, understanding these operators has become a matter of great importance. Several results have been obtained for combinations of local and nonlocal operators. The first and second eigenvalues of such operators were studied in \cite{pezzo_eigenvalue_mixed_p, divya_second_eigenvalue_mixed}, respectively. In the special case where $\mu = 0$, the work in \cite{Bingzhong_yang_mixed_eigenvalue} established the existence of an unbounded sequence of decreasing curves in the spectrum.\\

\noindent Many researchers have started to observe the spectrum with an added potential to the operator with singularity in order to gain a better understanding of the spectrum of fundamental operators. The Hardy potential is one of the fundamental potentials that is incorporated and is driven by the following Hardy's inequality,
\begin{equation}\label{hardy_inequality}
    C_H \int_{\om} \frac{|u|^p}{|x|^p}~ dx \leq \int_{\om} |\grad u |^p ~ dx
\end{equation}
for all $1< p < N$ and for all $u \in \sobo(\om)$ where $C_H = \left( \frac{N-p}{p}\right)^p$. Sreenadh in \cite{ sreenadh_fucik_hardy, sreenadh_second_ev_hardy} investigated the following problem 
\begin{align*}
     L_{\mu}(u) : = - \plap u - \frac{\mu}{|x|^p} |u|^{p-2}u & = \al V(x) (u^+)^{p-1} - \beta V(x) (u^-)^{p-1}   \mbox{   in  } \om \\
     u = 0 \mbox{  on  }  \partial \om
\end{align*}
and proved the existence of the first, second eigenvalue and the first nontrivial curve in the \fucik spectrum of this Hardy-Sobolev operator firstly for weight $V(x) \equiv 1$ and then later for more general conditions on weight $V(x)$. Finally, in \cite{sarika_fucik_pfractional}, the author extended the study to the fractional $p$-Laplacian operator with Hardy potential, and a weight $V(x)$. 
In the nonlocal setting, the Hardy potential is motivated and handled by the following nonlocal version of Hardy inequality \cite{frank_fractional_Hardy},
\begin{equation}\label{fractional_hardy_inequality}
    C_{N,p,s} \int_{\rnn} \frac{|u|^p}{|x|^{ps}} dx \leq \int_{\rnn} \int_{\rnn} \frac{|u(x) - u(y)|^p}{|x-y|^{N+sp}} dy \, dx \mbox{ for all } u \in W_s^p(\rnn),
\end{equation} 
where $N \geq 1$, $s \in (0,1)$, $1 < p < N/s,$
$$C_{N,p,s} = 2 \int_0^1 r^{ps-1}|1-r^{(n-ps)/p}|^p \Psi_{N,s,p}(r) ~ dr,$$ and 
\begin{equation*}
    \Psi_{N,s,p}(r) := 
    \begin{cases}
            |\mathbb{S}^{N-2}| \int_{-1}^1 \frac{(1-t^2)^{(n-3)/2} ~ dt}{(1-2rt+r^2)^{(N+ps)/2}}, & N \geq 2, \\
            \left( \frac{1}{(1-r)^{1+ ps}} + \frac{1}{(1+r)^{1+ps}} \right), & N =1. 
    \end{cases}
\end{equation*}    
\noi Inspired by all the above works, our primary objective is to investigate the spectrum, \fucik spectrum, regularity results and shape optimization problem of the mixed operator $\mathcal{T}$ in the presence of a mixed interpolated Hardy potential. The novelty of this article is that we consider the mixed interpolated Hardy potential, which, to the best of the authors' knowledge, has not been previously studied. This potential is inspired by the interplay between classical and nonlocal Hardy inequalities, which lead to the mixed interpolated Hardy inequality that is proved in section $2$. The addition of the mixed interpolated Hardy potential introduces several challenges. Firstly, the pointwise convergence of the gradient is required for the Palais-Smale sequences, which is established by modifying the arguments in Lemma $2.2$ of \cite{silva_mixed}. Secondly, the presence of the singular Hardy potential affects the regularity of the eigenfunctions, requiring the use of local estimates. To address this, Moser iteration is adapted to prove the local boundedness of the eigenfunctions in Section $6$.
\\

\noindent We start our study of the spectrum by investigating the eigenvalues of the operator $\mathcal{T}$, particularly the first two eigenvalues. The existence of eigenvalues is proved using the index theory, particularly with the help of Krasnoselskii genus. The first eigenvalue can be characterized as 
\begin{equation}\label{first_eigenvalue}
    \lambda_1(\om) = \inf_{u \in X(\om)} \left\{ \int_{\om} |\grad u|^p + \int_{\rnn} \int_{\rnn} \frac{|u(x) - u(y)|^p}{|x-y|^{N+sp}} -\mu \int_{\om} \frac{|u|^p}{|x|^{p\theta}} : \int_{\om} |u|^p = 1 \right\}
\end{equation}
where $X(\om)$ is defined in \eqref{function_space}, and it's proved to be isolated, simple. Meanwhile, eigenfunctions corresponding to other eigenvalues change sign. \\

\noindent After this, the \fucik spectrum is studied, and the first nontrivial curve is constructed. We end the study by answering the shape optimization problems of the domain for the first two eigenvalues. The shape optimization problem for the first eigenvalue is answered by proving the Faber-Krahn inequality, which states the ball has the smallest eigenvalue in the set of all domains with fixed volume ``$c$". For the second eigenvalue shape optimization problem, which states the smallest second eigenvalue is obtained for the disjoint union of two balls of equal volume ``$c/2$" among the class of all domains with fixed volume ``$c$", is proved with help of the mixed Hong-Krahn-Szeg\"{o} inequality. \\

\noi The structure of this paper is organized as follows. Section $2$ introduces the necessary notations and state fundamental lemmas and propositions. In Section $3$, we establish the mixed interpolated Hardy inequality and prove a key lemma concerning the pointwise convergence of the gradient for a bounded (P.-S.) sequence. The section concludes with a proof of the (P.-S.) condition for the associated functional $\mathcal{J}_d$ on the set $S$, which will be defined later. In section $4$, we study the eigenvalue problem associated with the operator $\mathcal{T}$ and prove some properties of eigenvalues and eigenfunctions.
Section $5$ introduces the \fucik spectrum for the operator $\mathcal{T}$, and the first nontrivial curve in the \fucik Spectrum is constructed, and some of its basic properties are obtained. Finally, Section $6$ explores shape optimization problems with respect to the first two eigenvalues for which we prove the Faber-Krahn inequality, regularity of eigenfunctions and the mixed Hong-Krahn-Szeg\"{o} inequality.

\section{Preliminaries}

We start this section by introducing some notations and spaces that are used in the paper and also state some basic lemmas and propositions that will be required. 
To study the problem, we will work with the following space
\begin{equation}\label{function_space}
X(\om) := \left\{ u \in W^{1,p}(\rnn) : u|_\om  \in W_{0}^{1,p}(\om) \mbox{ and } u = 0  \mbox{ a.e. in } \rnn\setminus\om \right\},
\end{equation}
where $W^{1,p}(\rnn) = \{ u \in L^p(\rnn) : \grad u \in  (L^p(\rnn))^N \}$ is the Sobolev space with the norm $\lv u \rv_{W^{1,p}} = \left( \int_{\om} |\grad u |^p + \int_{\om} |u|^p \right)^{1/p}$ and $W_0^{1,p}(\om)$ is the closure of the $C_c^{\infty}(\om)$ functions with respect to the norm $\left( \int_{\om} |\grad u|^p \right)^{\frac{1}{p}}$. $X(\om)$ is endowed with the following norm $\lv u \rv = \left( \int_{\om} |\grad u |^p + [u]_{s,p}^p \right)^{1/p},$ where $[u]_{s,p}^p = \int_{\rnn} \int_{\rnn} \frac{|u(x) - u(y)|^p}{|x-y|^{N + ps}} \, dx ~ dy$.\\
Moreover, we take 
\begin{equation*}
    \begin{split}
    C^{1,1}(\om) := \{ &u: \om \to \real : \text{there exists } C > 0 \text{ such that } \\
    & |Du(x) - Du(y)| \leq C |x - y| \text{ for all } x,y \in \om \}.
    \end{split}
\end{equation*}
\noi Now, for $1 < p < \infty$, we consider the function $F : \real \to \real$ defined as
\begin{equation*}
F(t) = 
\begin{cases}
    |t|^{p-2} t & \mbox{ if } t \neq 0,\\
    0 & \mbox{ if } t = 0 .
\end{cases}
\end{equation*}
We state the following useful propositions involving this function

\begin{proposition}[Lemma $C.2$ of \cite{brasco_cheeger}]
Let $p \in (1, \infty)$ and $\gamma \geq 1$, then for every $a,b,M \geq 0,$ we have 
\begin{equation}\label{fractional_minimum_inequality}
    F(a-b)(a_M^{\gamma} - b_M^{\gamma}) \geq \frac{\ga p^p}{(\ga + p -1)^p} \left| a_M^{\frac{\ga + p -1}{p}} - b_M^{\frac{\ga + p -1}{p}} \right|^p,
\end{equation}
where $a_M = \min \{a,M\}$ and $b_M = \min\{ b, M\}$.
\end{proposition}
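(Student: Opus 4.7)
The plan is to establish the inequality by first reducing to a single clean case via symmetry and truncation, and then applying the Fundamental Theorem of Calculus together with H\"older's inequality. By symmetry, I may assume $a\ge b\ge 0$, so that $F(a-b)=(a-b)^{p-1}\ge 0$ and $a_M^{\gamma}-b_M^{\gamma}\ge 0$, which allows me to drop the absolute value on the right-hand side.

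Next I would split into three subcases based on the position of $a,b$ relative to $M$. If $b\ge M$, then $a_M=b_M=M$ and both sides vanish, so the inequality is trivial. If $a>M\ge b$, then $a_M=M$ and $b_M=b$; since $a-b\ge M-b\ge 0$ we have $(a-b)^{p-1}\ge (M-b)^{p-1}$, so it suffices to verify the inequality with $a$ replaced by $M$, which reduces to the final case. In the remaining case $M\ge a\ge b\ge 0$, the inequality takes the form
\begin{equation*}
(a-b)^{p-1}(a^{\gamma}-b^{\gamma})\ge \frac{\gamma p^p}{(\gamma+p-1)^p}\left(a^{(\gamma+p-1)/p}-b^{(\gamma+p-1)/p}\right)^{p}.
\end{equation*}

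To handle this key case, I would use the Fundamental Theorem of Calculus to write
\begin{equation*}
a^{\gamma}-b^{\gamma}=\gamma\int_b^{a}t^{\gamma-1}\,dt,\qquad a^{(\gamma+p-1)/p}-b^{(\gamma+p-1)/p}=\frac{\gamma+p-1}{p}\int_b^{a}t^{(\gamma-1)/p}\,dt,
\end{equation*}
and then apply H\"older's inequality with conjugate exponents $p$ and $p/(p-1)$ to the second integral:
\begin{equation*}
\int_b^{a}t^{(\gamma-1)/p}\cdot 1\,dt\le \left(\int_b^{a}t^{\gamma-1}\,dt\right)^{1/p}(a-b)^{(p-1)/p}.
\end{equation*}
Raising this to the $p$-th power, substituting, and rearranging, the prefactors $\gamma$ and $((\gamma+p-1)/p)^{p}$ combine to produce exactly the stated constant $\gamma p^{p}/(\gamma+p-1)^{p}$. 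No single step is genuinely delicate; the main obstacle I anticipate is merely keeping the truncation case-analysis organized so that the reduction to $M\ge a\ge b\ge 0$ is tidy, after which the H\"older estimate closes the argument immediately.
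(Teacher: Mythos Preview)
Your proof is correct. The paper itself does not supply a proof of this proposition; it merely cites it as Lemma~C.2 of \cite{brasco_cheeger}, so there is no in-paper argument to compare against. Your approach---reduce by the antisymmetry of $F$ and the symmetry of the right-hand side to $a\ge b$, dispose of the truncation by a short case analysis, and then handle the core inequality $(a-b)^{p-1}(a^{\gamma}-b^{\gamma})\ge \frac{\gamma p^{p}}{(\gamma+p-1)^{p}}\bigl(a^{(\gamma+p-1)/p}-b^{(\gamma+p-1)/p}\bigr)^{p}$ via the Fundamental Theorem of Calculus and H\"older---is exactly the standard route used in the cited reference, and every step checks out, including the constant.
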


\begin{proposition}[Formula $2.2$ of \cite{simon_inequalities}]
For all $t_1, t_2 \in \rnn$, there exists a constant $C>0$ such that the following holds 
\begin{equation}\label{simon_inequality}
    \langle F(t_1) - F(t_2), t_1 - t_2 \rangle \geq 
\begin{cases}
    C | t_1 - t_2|^p & \mbox{ if } p \geq 2, \\
    C \frac{|t_1 - t_2|^2}{( |t_1| + |t_2|)^{2-p}} & \mbox{ if } 1 < p \leq 2.
\end{cases}
\end{equation}
\end{proposition}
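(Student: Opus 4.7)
The plan is to exploit that $F = \grad \phi$ for the convex potential $\phi(t) := |t|^p/p$, and reduce the inequality to a pointwise estimate on the Hessian $D^2\phi$ along the segment joining $t_2$ to $t_1$. Parametrising this segment as $\tau_s := t_2 + s(t_1 - t_2)$, $s \in [0,1]$, and writing $v := t_1 - t_2$, the fundamental theorem of calculus yields
\[
\langle F(t_1) - F(t_2), v \rangle = \int_0^1 \langle D^2\phi(\tau_s)\, v, v \rangle \, ds.
\]
A direct computation gives $D^2\phi(\tau) = |\tau|^{p-2} I + (p-2)|\tau|^{p-4}\, \tau \otimes \tau$, whose eigenvalues are $|\tau|^{p-2}$ on the subspace orthogonal to $\tau$ and $(p-1)|\tau|^{p-2}$ in the direction of $\tau$. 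Hence the minimum eigenvalue equals $\min(1, p-1)|\tau|^{p-2}$, and the two regimes $p \geq 2$ and $1 < p \leq 2$ are handled separately from this point on.

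In the superquadratic case $p \geq 2$, the smaller eigenvalue is $|\tau_s|^{p-2}$, so $\langle F(t_1) - F(t_2), v \rangle \geq |v|^2 \int_0^1 |\tau_s|^{p-2}\, ds$, and it remains to show $\int_0^1 |\tau_s|^{p-2}\, ds \geq c_p |v|^{p-2}$. This is a short geometric argument: the triangle inequality forces $\max(|t_1|, |t_2|) \geq |v|/2$, so without loss of generality $|t_1| \geq |v|/2$, and then for $s \in [3/4, 1]$ one has $|\tau_s| \geq |t_1| - (1-s)|v| \geq |v|/4$; integrating $|\tau_s|^{p-2}$ over this subinterval yields the desired bound with an explicit constant depending only on $p$.

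In the subquadratic case $1 < p \leq 2$, the eigenvalue estimate becomes $\langle D^2\phi(\tau_s) v, v \rangle \geq (p-1)|\tau_s|^{p-2}|v|^2$; here Cauchy--Schwarz in the form $\langle \tau, v \rangle^2 \leq |\tau|^2 |v|^2$ is essential because the second term $(p-2)|\tau|^{p-4}\langle \tau, v \rangle^2$ is negative. The remaining integral is now trivial: since $|\tau_s| \leq |t_1| + |t_2|$ and $p - 2 < 0$, one has $|\tau_s|^{p-2} \geq (|t_1|+|t_2|)^{p-2}$ pointwise in $s$, and integrating over $[0,1]$ produces the claimed estimate with constant $C = p-1$.

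The only non-routine ingredient is the geometric lower bound $\int_0^1 |\tau_s|^{p-2}\, ds \geq c_p |v|^{p-2}$ in the superquadratic case, and that is the main obstacle of the whole argument; the subquadratic case collapses immediately once Cauchy--Schwarz is applied to control the negative term. An alternative route for $p \geq 2$ is to invoke uniform convexity of $\phi$ on $\real^N$ directly, but the FTC approach above keeps the argument elementary and self-contained.
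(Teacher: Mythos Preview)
The paper does not prove this proposition; it is merely quoted as Formula~2.2 of \cite{simon_inequalities} with no argument given. Your proposal therefore cannot be compared against a proof in the paper, because there is none --- the authors simply invoke the inequality as a known tool.

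That said, your argument is essentially correct and is a standard way of establishing Simon's inequalities. One small technical point: in the subquadratic case $1<p<2$, the Hessian $D^2\phi(\tau)=|\tau|^{p-2}I+(p-2)|\tau|^{p-4}\tau\otimes\tau$ is singular at $\tau=0$, so the fundamental-theorem-of-calculus identity
\[
\langle F(t_1)-F(t_2),v\rangle=\int_0^1 \langle D^2\phi(\tau_s)v,v\rangle\,ds
\]
is not immediately justified when the segment $\tau_s$ passes through the origin (i.e.\ when $t_1$ and $t_2$ are negatively proportional). This is easily repaired --- either by a density argument, since the exceptional set of pairs has measure zero and both sides of \eqref{simon_inequality} are continuous in $(t_1,t_2)$, or by regularising $\phi$ as $\phi_\varepsilon(t)=(\varepsilon^2+|t|^2)^{p/2}/p$ and letting $\varepsilon\to 0$ --- but you should mention it. In the superquadratic case $p\ge 2$ there is no such issue, and your geometric lower bound on $\int_0^1|\tau_s|^{p-2}\,ds$ via the subinterval $[3/4,1]$ is clean and correct.
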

\noi It is quite useful in getting strong and pointwise convergence of sequences. 
\begin{proposition}[Lemma $A.2$ of \cite{brasco_second_eigenvalue}]
Let $1 < p < \infty$ and let $g : \real \to \real$ be an increasing function. Take $$ G(t) = \int_{0}^t (g^{\prime}(\tau))^{\frac{1}{p}} \, d \tau.$$ Then we have 
\begin{equation}\label{brasco_inequality}
    F(a-b)(g(a) - g(b)) \geq |G(a) - G(b)|^p.
\end{equation}
\end{proposition}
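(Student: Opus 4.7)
The inequality is clearly symmetric in $a$ and $b$: swapping them negates both $F(a-b)$ and $g(a)-g(b)$, while $|G(a)-G(b)|^p$ is unchanged. So I would begin by reducing to the case $a\geq b$, in which $F(a-b)=(a-b)^{p-1}\geq 0$ and, since $g$ is increasing, $g(a)-g(b)\geq 0$. The case $a=b$ is trivial, so assume $a>b$.

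Next, using the definition of $G$, I would rewrite
\begin{equation*}
G(a)-G(b)=\int_{b}^{a}(g'(\tau))^{1/p}\,d\tau,
\end{equation*}
which is nonnegative. The key step is then a direct application of H\"older's inequality with conjugate exponents $p$ and $p/(p-1)$ to the integrand $(g'(\tau))^{1/p}\cdot 1$:
\begin{equation*}
\int_{b}^{a}(g'(\tau))^{1/p}\,d\tau\;\leq\;\left(\int_{b}^{a}g'(\tau)\,d\tau\right)^{\!1/p}\left(\int_{b}^{a}1\,d\tau\right)^{\!(p-1)/p}=\bigl(g(a)-g(b)\bigr)^{1/p}(a-b)^{(p-1)/p}.
\end{equation*}
Raising both sides to the $p$-th power gives exactly
\begin{equation*}
(G(a)-G(b))^{p}\;\leq\;(a-b)^{p-1}\bigl(g(a)-g(b)\bigr)=F(a-b)\bigl(g(a)-g(b)\bigr),
\end{equation*}
which is the claimed bound in the reduced case.

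The only mild subtlety is justifying that $g'$ exists and is integrable a.e. on $[b,a]$, and that the fundamental theorem of calculus applies so that $\int_b^a g'(\tau)\,d\tau\leq g(a)-g(b)$. Since $g$ is monotone increasing, $g'$ exists a.e.\ and is locally integrable, and the above integral inequality holds (with equality if $g$ is absolutely continuous). This is precisely the form in which H\"older is applied, and the inequality $\int g' \leq g(a)-g(b)$ is in the favorable direction, so no further hypothesis on $g$ is needed beyond monotonicity. I do not foresee any real obstacle; the statement is essentially a one-line consequence of H\"older once the symmetry reduction is made.
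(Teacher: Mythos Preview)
The paper does not supply its own proof of this proposition; it is merely quoted from the cited reference and then applied. Your argument is correct and is exactly the standard one-line H\"older proof that appears in the source: reduce by symmetry to $a>b$, write $G(a)-G(b)=\int_b^a (g')^{1/p}\,d\tau$, apply H\"older with exponents $p$ and $p/(p-1)$, and use $\int_b^a g'\,d\tau\le g(a)-g(b)$ for monotone $g$. Nothing is missing.
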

\noi   Taking $g(t)$ equal to $t^+$ and $-t^-$ in \eqref{brasco_inequality}, we obtain the following inequalities
\begin{equation}\label{brasco_inequality_positive}
    F(a-b)(a^+ - b^+) \geq |a^+ - b^+|^p ,
\end{equation}
and
\begin{equation}\label{brasco_inequality_negative}
    -F(a-b)(a^- - b^-) \geq |a^- - b^-|^p,
\end{equation}
for $a, b \in \real$ and $1 < p < \infty$.

\noi To show the existence of eigenvalues of the operator $\mathcal{T}$, we would like to use the Theorem $5.7$ of \cite{struwe_variational_methods} stated as 
\begin{theorem}\label{thm:existence_ev_struwe}
Suppose $\mathcal{J} \in C^1(M)$ is an even functional on a complete symmetric $C^{1,1}$-manifold $M \subset X \setminus \{0\}$ in some Banach space $X$. Also suppose $\mathcal{J}$ satisfies $(P.\text{-}S.)$ and is bounded from below on $M$. Let
\[
\hat{\gamma}(M) = \sup \{ \gamma^*(K) : K \subset M \text{ compact and symmetric} \},
\]
where $\gamma^*$ is defined later in \eqref{genus}. Then the functional $\mathcal{J}$ possesses at least $\hat{\gamma}(M) \leq \infty$ pairs of critical points. Moreover, for any $k \leq \gamma^*(M)$, the values $\beta_k$(provided they are finite) given by 
$$\beta_k = \inf_{K \subset \mathcal{F}_k } \sup_{u \in K} E(u)$$ are critical points of $\mathcal{J}$, where $\mathcal{F}_k = \{ K \in \mathcal{A} : K \subset M, \gamma^*(K) \geq k\}.$

\end{theorem}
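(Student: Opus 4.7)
The plan is to establish this classical Ljusternik–Schnirelmann-type result by an equivariant minimax scheme built on the Krasnoselskii genus $\gamma^*$, combined with a $\mathbb{Z}_2$-equivariant deformation lemma on the $C^{1,1}$-manifold $M$. First I would check that each candidate value $\beta_k$ is a well-defined real number: since $\hat{\gamma}(M) = \sup\{\gamma^*(K) : K \subset M \text{ compact and symmetric}\}$, the family $\mathcal{F}_k$ is nonempty for every $k \leq \hat{\gamma}(M)$, and the assumed lower bound of $\mathcal{J}$ on $M$ gives $\inf_M \mathcal{J} \leq \beta_k < \infty$; the monotonicity $\beta_k \leq \beta_{k+1}$ is immediate from $\mathcal{F}_{k+1} \subset \mathcal{F}_k$.

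The heart of the proof is to show that each such $\beta_k$ is in fact a critical value of $\mathcal{J}$. I would argue by contradiction, assuming $c := \beta_k$ is regular, and construct an odd continuous map $\eta : M \to M$ together with $\e > 0$ satisfying $\eta(\{\mathcal{J} \leq c + \e\}) \subset \{\mathcal{J} \leq c - \e\}$. To build $\eta$, one assembles an odd, locally Lipschitz pseudo-gradient vector field $V$ tangent to $M$, using the $C^{1,1}$ structure of $M$ to project the ambient pseudo-gradient onto tangent spaces in a Lipschitz manner and then symmetrizing via $u \mapsto \tfrac{1}{2}(V(u) - V(-u))$. The Palais–Smale condition supplies a uniform positive lower bound on $\|d\mathcal{J}\|_{T_uM}$ in an annular neighbourhood of the level $c$, so integrating a suitable cutoff of $-V$ for a finite time produces the desired $\eta$. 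Next, I would pick $K \in \mathcal{F}_k$ with $\sup_K \mathcal{J} \leq c + \e$. Then $\eta(K)$ is compact and symmetric, the fundamental property $\gamma^*(\eta(K)) \geq \gamma^*(K) \geq k$ places $\eta(K)$ inside $\mathcal{F}_k$, yet $\sup_{\eta(K)} \mathcal{J} \leq c - \e$, contradicting the definition of $\beta_k$.

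To upgrade this to multiplicity I would sharpen the same argument: in the case of coincident values $\beta_k = \beta_{k+1} = \cdots = \beta_{k+j-1} = c$, the subadditivity $\gamma^*(A \cup B) \leq \gamma^*(A) + \gamma^*(B)$ and the continuity of $\gamma^*$ on closed symmetric sets force the set of critical points at level $c$ to have genus at least $j$, hence to contain at least $j$ antipodal pairs $\pm u$. Summing these contributions over the distinct critical levels yields the claimed $\hat{\gamma}(M)$ pairs. The principal technical obstacle is the construction of the equivariant tangential pseudo-gradient flow on a nonlinear manifold: the $C^{1,1}$ regularity assumption on $M$ is essential here, since a mere $C^1$ structure would not provide the Lipschitz regularity of the tangential projection needed to integrate the flow and obtain a globally defined continuous $\eta$ compatible with the $\mathbb{Z}_2$-action.
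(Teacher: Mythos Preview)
Your proof sketch is a correct outline of the classical Ljusternik--Schnirelmann argument: the genus is used as an index, the $\mathbb{Z}_2$-equivariant deformation lemma (built from an odd pseudo-gradient flow tangent to $M$) pushes sublevel sets down across any regular value, and the subadditivity of $\gamma^*$ handles the multiplicity in case of coincident critical levels. The role you identify for the $C^{1,1}$ hypothesis is also accurate.

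However, you should be aware that the paper does \emph{not} supply its own proof of this statement. Theorem~\ref{thm:existence_ev_struwe} is quoted in the preliminaries section as Theorem~5.7 of Struwe's book \emph{Variational Methods} and is simply invoked as a known tool to deduce the existence of the minimax eigenvalues in Theorem~\ref{existence_of_eigenvalues}. So there is no ``paper's proof'' to compare against; your outline essentially reconstructs the argument that one would find in Struwe's text.
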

\noi Next, we recall the following propositions that will be helpful in proving the simplicity of the first eigenvalue of the local and nonlocal operator with mixed interpolated Hardy.
\begin{proposition}[Discrete Picone Identity \cite{amghibech_discrete_picone}] \label{discrete_picone_identity}
Let $p \in (1,\infty)$. For $u,v : \om \subset \rnn \to \real$ such that $u \geq 0$ and $v > 0$, we have $$ L (u,v) \geq 0 \mbox{ in } \rnn \times \rnn, $$ where $$ L(u,v) = |u(x)-u(y)|^p - |v(x) - v(y)|^{p-2}(v(x) - v(y))\left( \frac{u(x)^p}{v(x)^{p-1}} - \frac{u(y)^p}{v(y)^{p-1}} \right).$$
The equality holds if and only if $u = k v$ a.e. for some constant $k$.
\end{proposition}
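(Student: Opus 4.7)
My plan is to reduce $L(u,v)(x,y) \geq 0$ to a pointwise scalar inequality in four numerical variables and then prove it via two applications of the convexity of $t \mapsto |t|^p$.

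First I will exploit the symmetry of $L$ under interchange of $x$ and $y$ -- which flips both the sign of $|v(x)-v(y)|^{p-2}(v(x)-v(y))$ and the sign of the ratio difference -- to reduce to the case $R := v(x) \geq S := v(y)$. If $R = S$, the factor $|R-S|^{p-2}(R-S)$ vanishes and the remaining term $|u(x)-u(y)|^p$ is nonnegative, so I may assume $R > S$. Writing $h := R - S > 0$, $A := u(x)/v(x) \geq 0$, and $B := u(y)/v(y) \geq 0$, the desired inequality collapses to
\begin{equation*}
|AR - BS|^p \geq h^{p-1}(A^p R - B^p S).
\end{equation*}

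For the scalar inequality, the crucial algebraic step is the decomposition $AR - BS = Ah + (A-B)S$, which separates the height $h$ from the difference $A-B$. I will then apply the tangent-line inequality for the convex function $g(t) := |t|^p$ at the base point $Ah \geq 0$, obtaining
\begin{equation*}
|Ah + (A-B)S|^p \geq (Ah)^p + p(Ah)^{p-1}(A-B)S = A^p h^p + pA^{p-1}h^{p-1}(A-B)S,
\end{equation*}
and independently apply the tangent-line inequality for $t \mapsto t^p$ on $[0,\infty)$ at the point $A$, evaluated at $B$, which rearranges to $pA^{p-1}(A-B) \geq A^p - B^p$. Multiplying this second bound by $Sh^{p-1} \geq 0$ and chaining the two estimates yields
\begin{equation*}
|AR - BS|^p \geq A^p h^p + (A^p - B^p) Sh^{p-1} = h^{p-1}(A^p R - B^p S),
\end{equation*}
using $A^p h + A^p S = A^p R$ at the last step.

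For the equality statement, since $p > 1$ makes both $|\cdot|^p$ and $t^p$ strictly convex on their relevant domains, each tangent-line inequality is strict unless the two arguments coincide; in both cases this forces $A = B$. Thus $L(u,v)(x,y) = 0$ iff $u(x)/v(x) = u(y)/v(y)$, and imposing this for a.e. pair $(x,y)$ forces $u/v$ to be a.e. constant, giving $u = kv$ a.e. I do not expect a serious obstacle here; the main care is in the sign bookkeeping for the WLOG symmetrization and in confirming that the chain remains valid in the boundary case $A = 0$, where $g'(Ah) = g'(0) = 0$ and the first tangent bound degenerates harmlessly to $|(-B)S|^p \geq 0$.
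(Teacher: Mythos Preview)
The paper does not give its own proof of this proposition; it is stated with a citation to Amghibech and used as a black box. Your argument is correct and is essentially the standard convexity proof of the discrete Picone inequality: after the symmetry reduction to $R=v(x)\ge S=v(y)$ and the substitution $A=u(x)/v(x)$, $B=u(y)/v(y)$, the identity $AR-BS=Ah+(A-B)S$ together with two applications of the tangent-line inequality for $t\mapsto|t|^p$ is exactly how the cited result is proved. The boundary cases ($R=S$, $A=0$) and the equality analysis via strict convexity are handled correctly; the passage from ``$u(x)/v(x)=u(y)/v(y)$ for a.e.\ pair'' to ``$u/v$ a.e.\ constant'' is the usual Fubini argument.
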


\begin{proposition}[Continuous Picone Identity \cite{huang_continous_picone_identity}]\label{continuous_picone_identity}
Let us define for $u, v \in C^1(\Omega)$ with $u \geq 0$ and $v >0$
\[
L_c(u, v) = |\nabla u|^p - (p-1) \frac{u^p}{v^p} |\nabla v|^p - p \frac{u^{p-1}}{v^{p-1}} |\nabla v|^{p-2} \nabla v \nabla u,
\]
and
\[
R_c(u, v) = |\nabla u|^p - |\nabla v|^{p-2} \nabla v  \nabla \left( \frac{u}{v^{p-1}} \right).
\]
Then $R_c(u, v) = L_c(u, v) \geq 0$ and equal to $0$ if and only if $u = k v$ for some constant $k$.    
\end{proposition}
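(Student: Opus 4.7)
The plan is to prove the identity $R_c(u,v)=L_c(u,v)$ by direct calculation and then to establish non-negativity via Young's inequality, treating the equality case separately. First I would expand $\nabla\bigl(u^p/v^{p-1}\bigr)$ using the quotient/product rule to obtain
$$\nabla\!\left(\frac{u^p}{v^{p-1}}\right) \;=\; p\,\frac{u^{p-1}}{v^{p-1}}\,\nabla u \;-\;(p-1)\,\frac{u^p}{v^p}\,\nabla v .$$
Taking the inner product with $|\nabla v|^{p-2}\nabla v$ and substituting into $R_c(u,v)$ should give exactly $L_c(u,v)$ (after correcting the sign of the $(p-1)u^p|\nabla v|^p/v^p$ term, which appears with a misprint in the statement but is the only algebraically consistent choice for the standard Picone identity). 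Since $v>0$ everywhere, all denominators are defined and the computation is pointwise.

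For the inequality $L_c(u,v)\geq 0$, I would first estimate the cross term by Cauchy--Schwarz,
$$p\,\frac{u^{p-1}}{v^{p-1}}\,|\nabla v|^{p-2}\,\nabla v\!\cdot\!\nabla u \;\leq\; p\,\frac{u^{p-1}}{v^{p-1}}\,|\nabla v|^{p-1}\,|\nabla u|,$$
and then apply Young's inequality with conjugate exponents $p$ and $p/(p-1)$ to the pair $\bigl(|\nabla u|,\,(u/v)^{p-1}|\nabla v|^{p-1}\bigr)$. This yields
$$p\,|\nabla u|\cdot\left(\frac{u}{v}\right)^{\!p-1}\!|\nabla v|^{p-1} \;\leq\; |\nabla u|^p \;+\;(p-1)\,\frac{u^p}{v^p}\,|\nabla v|^p,$$
so after rearrangement one gets $L_c(u,v)\geq 0$.

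For the equality case, chasing back through the argument, equality in Young's inequality forces $|\nabla u|^p = (u/v)^p|\nabla v|^p$ pointwise, and equality in Cauchy--Schwarz forces $\nabla u$ and $\nabla v$ to be positively proportional. Together these give $\nabla u = (u/v)\,\nabla v$, which is equivalent to $\nabla(u/v)=0$ on $\Omega$, hence $u = kv$ for some constant $k \geq 0$ on each connected component. The reverse implication is immediate by substitution, since $u=kv$ makes every term a scalar multiple of $|\nabla v|^p$ that cancels.

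The only genuine subtlety is handling points where $u$ or $\nabla v$ vanishes, since the Young-type bound above is stated after dividing by $u^{p-1}v^{1-p}$-type factors; at such points the cross term and the quadratic term both vanish and the inequality reduces to $|\nabla u|^p \geq 0$, so the estimate still holds. The hypothesis $v>0$ on all of $\Omega$ is what makes the $C^1$-calculation clean and avoids any boundary analysis; no compactness or integration by parts is needed, as everything is a pointwise algebraic inequality.
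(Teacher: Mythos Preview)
The paper does not supply its own proof of this proposition; it is quoted as a preliminary result from \cite{huang_continous_picone_identity} and simply cited. Your argument is the standard one for the continuous Picone identity and is correct: the quotient-rule expansion gives $R_c=L_c$, Cauchy--Schwarz followed by Young's inequality with exponents $p$ and $p/(p-1)$ gives $L_c\geq 0$, and tracking the equality cases yields $\nabla(u/v)=0$.

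You are also right that the statement as printed contains misprints. The sign of the $(p-1)\dfrac{u^p}{v^p}|\nabla v|^p$ term in $L_c$ must be $+$, not $-$, for the identity and the inequality to hold; and $R_c$ should involve $\nabla\!\left(\dfrac{u^p}{v^{p-1}}\right)$ rather than $\nabla\!\left(\dfrac{u}{v^{p-1}}\right)$. Your expansion already incorporates these corrections, which are consistent with the original reference and with how the identity is actually used later in the paper (in the simplicity proof, where one tests with $w_k=\psi_k^p/(\phi_2+1/k)^{p-1}$).
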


\noi Finally, we end this section by stating the following variant of the mountain pass Lemma \cite{Cuesta1999fucik}. 
\begin{lemma}[Mountain Pass Lemma]\label{mountain_pass_variant}
Let $X$ be a real Banach space and $M = \{ u \in X : g(u) = 1 \}$, where $g \in C^1(X;\real)$. Suppose $\mathcal{J} \in C^1(X;\real), u_0,u_1 \in M$ and let $\e > 0$ be such that $ \lv u_1 - u_0 \rv > \e$ and $$ \inf\{\mathcal{J}(u): u \in M \mbox{ and } \lv u - u_0 \rv_X = \e \} > max\{ \mathcal{J}(u_0), \mathcal{J}(u_1)\},$$ then there exists a sequence $\{u_k\} \in M$ such that $\mathcal{J}(u_k) \to c$ and $\lv \bar{\mathcal{J}}^{\prime}(u_k) \rv_* \to 0 $, where 
\begin{equation}\label{derivative_on_manifold}
    \lv \bar{\mathcal{J}}^{\prime}(u) \rv_* = \inf \{ \lv {\mathcal{J}}^{\prime}(u) - t g^{\prime}(u) \rv_{X^*} : t \in \real \}
\end{equation}
and $c = \inf_{\gamma \in \Gamma} sup_{\gamma} \mathcal{J}(u)$ with $\Gamma = \{ \gamma \in C([-1,1],M): \gamma(-1) = u_0, \gamma(1) = u_1 \}$. Moreover, if $\mathcal{J}$ satisfies the (P.-S.) condition on $M$, then there exists a $u \in M$ such that $\mathcal{J}(u) = c$ and $\lv \bar{\mathcal{J}}^{\prime}(u) \rv_* = 0$.
\end{lemma}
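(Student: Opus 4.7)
The plan is to combine a topological linking fact with a quantitative deformation argument on the constraint manifold $M$, closely following the Cuesta proof cited in the statement. The main idea is standard for mountain-pass lemmas on manifolds, but one must be careful to work with the tangential (Lagrange) derivative $\|\bar{\mathcal{J}}'(u)\|_*$ rather than the free derivative $\mathcal{J}'(u)$.

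First, I would verify that the min-max level $c$ satisfies
$$c \;\ge\; \inf\{\mathcal{J}(u): u\in M,\ \|u-u_0\|_X=\varepsilon\} \;>\; \max\{\mathcal{J}(u_0),\mathcal{J}(u_1)\}.$$
This is a linking/intermediate-value argument: for any $\gamma\in\Gamma$, the scalar function $t\mapsto\|\gamma(t)-u_0\|_X$ is continuous on $[-1,1]$, equals $0$ at $t=-1$ and exceeds $\varepsilon$ at $t=1$, so $\gamma$ must cross the set $\{u\in M:\|u-u_0\|_X=\varepsilon\}$. Hence $\sup_{\gamma}\mathcal{J}$ dominates the infimum on that sphere, and $c\in\mathbb{R}$.

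Next I would argue by contradiction: assume no such sequence exists. Then there exist $\delta,\alpha>0$ with $\|\bar{\mathcal{J}}'(u)\|_*\ge\alpha$ whenever $u\in M$ and $|\mathcal{J}(u)-c|<2\delta$, and (shrinking $\delta$) also $\max\{\mathcal{J}(u_0),\mathcal{J}(u_1)\}<c-2\delta$. The key technical step is to build a locally Lipschitz pseudo-gradient tangent vector field $V:M_\delta\to X$, where $M_\delta=\{u\in M:|\mathcal{J}(u)-c|<2\delta\}$, such that $\|V(u)\|_X\le 1$, $\langle g'(u),V(u)\rangle=0$, and $\langle\mathcal{J}'(u),V(u)\rangle\ge\alpha/4$. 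Pointwise the vector $V(u)$ is obtained from the Hahn--Banach characterization of $\|\bar{\mathcal{J}}'(u)\|_*$ as the distance in $X^*$ from $\mathcal{J}'(u)$ to $\mathbb{R}\,g'(u)$: there is $v\in X$ with $\|v\|\le 1$, $\langle g'(u),v\rangle=0$, and $\langle\mathcal{J}'(u),v\rangle\ge\tfrac12\|\bar{\mathcal{J}}'(u)\|_*$. A standard open-cover/partition-of-unity globalization promotes this to a field $V$; its flow $\eta(\tau,\cdot)$ stays on $M$ because $V$ is tangential. Choosing a path $\gamma\in\Gamma$ with $\sup_t\mathcal{J}(\gamma(t))<c+\delta$ and composing with a suitably cut-off time-$\tau$ flow yields $\tilde\gamma\in\Gamma$ (since $u_0,u_1$ lie outside $M_\delta$ and are fixed by the flow) with $\sup_t\mathcal{J}(\tilde\gamma(t))<c-\delta/2$, contradicting the definition of $c$. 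This is the main obstacle: constructing a tangential pseudo-gradient requires the Hahn--Banach step above and is the place where the norm $\|\bar{\mathcal{J}}'(u)\|_*$ enters essentially.

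Finally, the last statement is immediate: given a sequence $\{u_k\}\subset M$ with $\mathcal{J}(u_k)\to c$ and $\|\bar{\mathcal{J}}'(u_k)\|_*\to 0$, the (P.-S.) hypothesis on $M$ delivers a subsequence $u_k\to u$ in $X$. By continuity of $g$ we have $u\in M$, by continuity of $\mathcal{J}$ we get $\mathcal{J}(u)=c$, and by continuity of the map $u\mapsto\|\bar{\mathcal{J}}'(u)\|_*$ (which follows from the continuity of $\mathcal{J}'$ and $g'$ in $X^*$) we conclude $\|\bar{\mathcal{J}}'(u)\|_*=0$, i.e.\ $\mathcal{J}'(u)=t\,g'(u)$ for some Lagrange multiplier $t\in\mathbb{R}$, which identifies $u$ as a constrained critical point at level $c$.
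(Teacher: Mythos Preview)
The paper does not prove this lemma at all: it is stated in the Preliminaries section as a known variant of the mountain pass lemma and simply attributed to \cite{Cuesta1999fucik}. Your sketch---linking argument to bound $c$ from below, contradiction via a tangential pseudo-gradient flow and quantitative deformation, then passage to the limit under (P.-S.)---is exactly the standard proof one finds in Cuesta's paper and in Ghoussoub's framework, so you have supplied what the authors chose to outsource.

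One small technical caution: the fact that $\langle g'(u),V(u)\rangle=0$ does not by itself guarantee that the flow of $V$ remains on $M$ in a general Banach space; in the usual proofs one either works with a local chart/projection onto $M$ (using that $g'(u)\neq 0$ on $M$ so $M$ is a $C^1$ submanifold) or replaces the tangential flow by an ODE on $M$ built from the retraction. This is routine and handled in the cited reference, but your sentence ``its flow $\eta(\tau,\cdot)$ stays on $M$ because $V$ is tangential'' elides that step.
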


\section{Interpolated Hardy Inequality and Convergence results}


\noi First, we state and prove the mixed interpolated Hardy inequality, which is inspired by both the classical and fractional Hardy inequalities. Next, we establish a key lemma that ensures the pointwise convergence of the gradients of the (P.-S.) sequence, facilitating the passage to the limit. Finally, we conclude the section by proving the (P.-S.) condition for the associated functional $\mathcal{J}_d$ on the set $S$.

\begin{lemma}[Mixed Interpolated Hardy Inequality]
    Let $\theta \in [s,1]$ with $s \in (0,1)$ and  $1 < p < N$. Then for all $u \in X(\om)$, the following holds 
\begin{equation}\label{interpolated_hardy_with_theta}
   \int_{\om} \frac{|u|^{p}}{|x|^{p \theta}} dx \leq \frac{(\theta -s)}{(1-s)C_H} \int_{\om} |\grad u |^p ~ dx + \frac{(1-\theta)}{(1-s)C_{N,p,s}}\int_{\rnn} \int_{\rnn} \frac{|u(x) - u(y)|^p}{|x-y|^{N+sp}} dy\,dx.
\end{equation}
\end{lemma}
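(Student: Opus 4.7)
The plan is to view the inequality as a convex-interpolation estimate between the classical Hardy inequality \eqref{hardy_inequality} and the fractional Hardy inequality \eqref{fractional_hardy_inequality}. The starting observation is that the exponent $p\theta$ on $|x|$ can be written as a convex combination of the two Hardy exponents $ps$ and $p$: setting
$$\lambda = \frac{1-\theta}{1-s} \in [0,1], \qquad 1-\lambda = \frac{\theta - s}{1-s},$$
one checks that $p\theta = \lambda\,(ps) + (1-\lambda)\,p$. This is exactly the relation that makes $\theta = s$ (i.e.\ $\lambda = 1$) recover the pure fractional Hardy inequality and $\theta = 1$ (i.e.\ $\lambda = 0$) recover the pure classical Hardy inequality.

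Next I would use this to split the weight pointwise. Writing
$$\frac{|u(x)|^p}{|x|^{p\theta}} = \left(\frac{|u(x)|^p}{|x|^{ps}}\right)^{\lambda}\left(\frac{|u(x)|^p}{|x|^{p}}\right)^{1-\lambda},$$
Young's inequality with exponents $1/\lambda$ and $1/(1-\lambda)$ gives the pointwise bound
$$\frac{|u(x)|^p}{|x|^{p\theta}} \;\leq\; \lambda\,\frac{|u(x)|^p}{|x|^{ps}} \;+\; (1-\lambda)\,\frac{|u(x)|^p}{|x|^{p}}.$$
(If $\theta = s$ or $\theta = 1$ one of the terms is absent and no interpolation is needed; the inequality reduces to a single Hardy estimate.)

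Then I integrate over $\om$ and apply the two Hardy inequalities to the right-hand side. Since $u \in X(\om)$ extends by zero to $\rnn$, its restriction lies in $W_0^{1,p}(\om)$, so \eqref{hardy_inequality} yields
$$\int_{\om} \frac{|u|^p}{|x|^{p}}\,dx \;\leq\; \frac{1}{C_H}\int_{\om}|\grad u|^p\,dx,$$
and because $u \in W^{1,p}(\rnn)$ with finite Gagliardo seminorm (so $u$ belongs to the space where \eqref{fractional_hardy_inequality} is valid), we also have
$$\int_{\om} \frac{|u|^p}{|x|^{ps}}\,dx \;=\; \int_{\rnn} \frac{|u|^p}{|x|^{ps}}\,dx \;\leq\; \frac{1}{C_{N,p,s}}\rnnn \frac{|u(x)-u(y)|^p}{|x-y|^{N+ps}}\,dx\,dy.$$
Substituting the values $\lambda = (1-\theta)/(1-s)$ and $1-\lambda = (\theta-s)/(1-s)$ into the weighted combination produces exactly \eqref{interpolated_hardy_with_theta}.

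There is no real obstacle here; the only point requiring a moment of care is checking that $u \in X(\om)$ is an admissible test function in \emph{both} Hardy inequalities simultaneously, which is built into the definition \eqref{function_space} (the norm controls both $\|\grad u\|_{L^p(\om)}$ and $[u]_{s,p}$, and $u$ vanishes outside $\om$). Everything else is convex interpolation plus Young's inequality, so the proof should be short.
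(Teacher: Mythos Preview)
Your proposal is correct and follows essentially the same approach as the paper: write $p\theta$ as a convex combination of $ps$ and $p$, split the integrand accordingly, apply Young's inequality, and then invoke the classical and fractional Hardy inequalities. The only cosmetic difference is that you apply Young's inequality pointwise before integrating, whereas the paper first applies H\"older's inequality to the integrals and then Young's; the resulting bound is identical.
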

\begin{proof}
If $\theta = s$ and $\theta = 1$, then the inequality \eqref{interpolated_hardy_with_theta} follows from the classical Hardy inequality \eqref{hardy_inequality} and the fractional Hardy inequality \eqref{fractional_hardy_inequality}, respectively. \\
If $\theta \in (s,1)$, then we can write $\theta = (1 - \eta) s + \eta$ with $\eta = \frac{\theta - s}{1 - s} \in (0,1)$. Using this and H\"{o}lder's inequality and Young's inequality with exponents $1/\eta$ and $1/(1-\eta)$, we obtain
\begin{align*}
\int_{\om} \frac{|u|^{p}}{|x|^{p \theta}}  &= \int_{\om} \frac{|u|^{p(1 - \eta + \eta)}}{|x|^{p ((1-\eta) s+ \eta)}} \leq \left( \int_{\om} \frac{|u|^p}{|x|^{ps}}\right)^{1 - \eta} \left( \int_{\om} \frac{|u|^p}{|x|^p} \right)^\eta  \leq ( 1- \eta) \int_{\om} \frac{|u|^p}{|x|^{ps}} + \eta \int_{\om} \frac{|u|^p}{|x|^p}.
\end{align*}
Finally, the Hardy inequality \eqref{hardy_inequality} and the fractional Hardy inequality \eqref{fractional_hardy_inequality} conclude our result.
\end{proof}

\begin{remark}
Taking $\xi = \frac{\theta - s}{1 - s}$ in \eqref{interpolated_hardy_with_theta}, we have
\begin{equation}\label{interpolated_hardy}
   \int_{\om} \frac{|u|^{p}}{|x|^{p \theta}} dx \leq \frac{\xi}{C_H} \int_{\om} |\grad u |^p ~ dx + \frac{(1-\xi)}{C_{N,p,s}}\int_{\rnn} \int_{\rnn} \frac{|u(x) - u(y)|^p}{|x-y|^{N+sp}}dy \, dx.
\end{equation}
\end{remark}

\noi Fix $d>0$, consider the functional $\mathcal{J}_d: X(\om) \to \mathbb{R}$ as
\begin{equation}\label{energy_functional_d}
\mathcal{J}_d(u) :=  \int_{\om} |\grad u|^p \, dx + \rnnn \frac{|u(x)-u(y)|^{p}}{|x-y|^{N+ps}} \, dx \, dy - \mu \int_{\Omega} \frac{|u|^p}{|x|^{p\theta}} \, dx - d \int_{\Omega} (u^+)^p \, dx.
\end{equation}
Then $\mathcal{J}_d \in C^1(X(\om), \mathbb{R})$ and for any $ v \in X(\om)$
\begin{align*}
\frac{1}{p}\langle \mathcal{J}_d'(u), v \rangle =& \int_{\om} |\grad u|^{p-2} \grad u \grad v + \rnnn \frac{|u(x)-u(y)|^{p-2}(u(x)-u(y))(v(x)-v(y))}{|x-y|^{N+p s}} \, dx \, dy \\
 & - \mu \int_{\Omega} \frac{|u|^{p-2} u v}{|x|^{p\theta}} \, dx - d \int_{\Omega} (u^+)^{p-1} v \, dx.
\end{align*}
Also, $\tilde{\mathcal{J}}_d := \mathcal{J}_d|_S \in C^1(X(\om), \mathbb{R})$, where $S$ is defined as
\noi Also, define 
\begin{equation}\label{unit_sphere}
    S := \left\{ u \in X(\om) : I(u) := \int_{\om} |u|^p = 1 \right\}.
\end{equation}
Moreover, we take $\mathcal{J} = \mathcal{J}_0$, i.e., $\mathcal{J} : X(\om) \to \real$ given as
\begin{equation}\label{energy_functional}
    \mathcal{J}(u) = \int_{\om} |\grad u|^p dx + \int_{\rnn} \int_{\rnn} \frac{|u(x) - u(y)|^p}{|x-y|^{N+sp}} dy~dx -\mu \int_{\om} \frac{|u|^p}{|x|^{p\theta}}dx.
\end{equation}

\noi As the gradient term appears in the Palais-Smale sequence, we require its pointwise convergence to pass to the limit. To establish this, we present the following lemma, which is inspired by Lemma $2.2$ of \cite{silva_mixed}.
\begin{lemma}\label{pointwise_gradient}
Let $\mathcal{J}_d$ be the functional defined as in \eqref{energy_functional_d} and $\tilde{\mathcal{J}_d} := \mathcal{J}_d|_S$. If  $\mathcal{J}_d(u_k) \to c$ for some $c \in \real$ and $\lv \tilde{\mathcal{J}_d}^{\prime}(u_k)\rv_{*} \to 0$, defined as in \eqref{derivative_on_manifold}, for a sequence $\{ u_k \}$ in $X(\om)$, then, up to a subsequence, we have $\grad u_k(x) \to \grad u(x) $ a.e. in $\om$ as $k \to \infty$.
\end{lemma}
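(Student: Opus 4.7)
The plan is to follow the classical Boccardo--Murat truncation strategy, adapted here to the mixed local--nonlocal setting with a singular interpolated Hardy weight. In outline: (i) first obtain boundedness of $\{u_k\}$ in $X(\om)$ using the mixed interpolated Hardy inequality; (ii) extract a weak subsequential limit $u$; (iii) test the approximate Euler--Lagrange equation satisfied by $u_k$ on the manifold $S$ against the bounded truncation $T_h(u_k - u)$; (iv) show that, as $k \to \infty$, only the local $p$-Laplacian principal part survives in the limit; and (v) invoke Simon's inequality \eqref{simon_inequality} on the set $\{|u_k - u| \le h\}$ to upgrade the identity to pointwise a.e. convergence of the gradients.

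For (i)--(ii), coerciveness comes from \eqref{interpolated_hardy}, which bounds $\mu\int_\om |u|^p/|x|^{p\theta}\,dx$ by a multiple, strictly less than $1$, of $\int_\om |\grad u|^p\,dx + [u]_{s,p}^p$, because $\mu < \mu_0(\theta)$. Combined with $\mathcal{J}_d(u_k) \to c$ and the normalization $\|u_k\|_{L^p(\om)}=1$ coming from $u_k \in S$, this gives $\|u_k\|_{X(\om)} \le C$. Up to a subsequence $u_k \weak u$ in $X(\om)$, $u_k \to u$ in $L^q(\om)$ for $q \in [1,p^*)$, and $u_k \to u$ a.e. in $\om$. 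Since $u_k$ is a constrained (P.-S.) sequence on $S$, there exist Lagrange multipliers $t_k \in \real$ with
\begin{equation*}
\mathcal{J}_d'(u_k) - t_k I'(u_k) = \varepsilon_k \quad \text{in } X(\om)^*, \qquad \|\varepsilon_k\|_{X(\om)^*} \to 0;
\end{equation*}
testing this identity against $u_k$ and using the boundedness above shows that $\{t_k\}$ is bounded, so up to a further subsequence $t_k \to t \in \real$.

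The main obstacle lies in steps (iii)--(iv). Fix $h > 0$ and set $\varphi_{k,h} := T_h(u_k - u)$ where $T_h(r) = \max\{-h,\min\{h,r\}\}$. Then $\varphi_{k,h} \in X(\om)\cap L^\infty(\om)$, $\|\varphi_{k,h}\|_{L^\infty} \le h$, and $\varphi_{k,h} \weak 0$ in $X(\om)$ as $k \to \infty$. Testing the identity above against $\varphi_{k,h}$, the subcritical terms $t_k \int_\om |u_k|^{p-2}u_k\,\varphi_{k,h}\,dx$ and $d\int_\om (u_k^+)^{p-1}\varphi_{k,h}\,dx$ both vanish in the limit by the strong $L^p$ convergence $u_k \to u$. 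The singular Hardy contribution
\begin{equation*}
\mu \int_\om \frac{|u_k|^{p-2}u_k\,\varphi_{k,h}}{|x|^{p\theta}}\,dx
\end{equation*}
is controlled by splitting $\om = (\om\cap B_\delta(0)) \cup (\om \setminus B_\delta(0))$: on the second piece the weight is bounded and dominated convergence gives $o(1)$, while on the first piece \eqref{interpolated_hardy}, together with $\|\varphi_{k,h}\|_{L^\infty}\le h$ and the uniform bound on $\|u_k\|_{X(\om)}$, yields a quantity that is $o_\delta(1)$ uniformly in $k$; letting $\delta \to 0$ afterwards concludes. The fractional bilinear form tested with $\varphi_{k,h}$ is handled via the monotonicity of $\pfrac$, the a.e. convergence of $u_k$, and a Br\'ezis--Lieb-type splitting, along the lines of Lemma~$2.2$ of \cite{silva_mixed}.

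After the above reductions, one is left with
\begin{equation*}
\int_\om |\grad u_k|^{p-2}\grad u_k \cdot \grad T_h(u_k - u)\,dx = o(1) \quad \text{as } k \to \infty.
\end{equation*}
Subtracting the weakly vanishing quantity $\int_\om |\grad u|^{p-2}\grad u \cdot \grad T_h(u_k - u)\,dx$, the resulting integrand is pointwise nonnegative on $\{|u_k - u| \le h\}$ by Simon's inequality \eqref{simon_inequality}, and integration then forces $\grad u_k \to \grad u$ in $L^p(\{|u_k - u|\le h\})$ (directly for $p \ge 2$, and via H\"older together with the $X(\om)$-boundedness when $1 < p < 2$). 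Since this holds for every $h > 0$, a diagonal subsequence argument and the a.e. finiteness of $u$ yield $\grad u_k(x) \to \grad u(x)$ a.e. in $\om$, completing the proof. The delicate point throughout is the uniform control of the singular Hardy term near the origin, which is precisely what the interpolated Hardy inequality is designed to provide.
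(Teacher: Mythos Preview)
Your proposal is correct and follows the same Boccardo--Murat truncation scheme as the paper, with two technical variations worth noting. First, the displayed claim after step (iv) that the local gradient term equals $o(1)$ should read $\le o(1)$: the nonlocal contribution tested against $T_h(u_k-u)$ is not itself $o(1)$, but after subtracting the weakly vanishing piece paired with $u$, the remainder is nonnegative by the monotonicity you invoke, so it can only be dropped with an inequality---which is enough once you subtract $\int_\om |\grad u|^{p-2}\grad u\cdot\grad T_h(u_k-u)\,dx$ and observe the resulting integrand is $\ge 0$. Second, the paper does not show the Hardy term vanishes; it simply bounds it by $jC$ via H\"older and $\|T_j(u_k-u)\|_\infty\le j$, leaving a residual $jC$ in the $\limsup$, and then closes the argument by sending $j\to 0^+$ through an $e_k^\delta$ interpolation on the sets $\{|u_k-u|\le j\}$ and $\{|u_k-u|>j\}$. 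Your alternative---a spatial splitting near the origin exploiting $|x|^{-p\theta}\in L^1_{\mathrm{loc}}$ (valid since $p\theta\le p<N$)---makes the Hardy contribution genuinely $o(1)$ for each fixed $h$, so you obtain $\int_{\{|u_k-u|\le h\}}e_k\to 0$ directly; combined with $|\{|u_k-u|>h\}|\to 0$ this already yields a.e.\ convergence of gradients along a subsequence, and the diagonal extraction you mention is in fact unnecessary.
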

\begin{proof}
Applying the mixed interpolated Hardy inequality, we have
\begin{align*}
    C + d \geq \tilde{\mathcal{J}_d}(u_k) + d \int_{\om} (u^+)^p \geq \left( 1 - \frac{\mu \xi}{C_H}\right) \int_{\om} |\grad u_k|^p + \left( 1 - \frac{\mu(1-\xi)}{C_{N,p,s}} \right)[u_k]_s^p.
\end{align*}
Then the sequence $\{ u_k \}$ is bounded in $X(\om)$. Therefore, up to a subsequence (still denoted by $u_k$), as $k \to \infty$, we have
\begin{align}\label{pointwise_cgs_weak_properties}
    &u_k \weak u \mbox{ weakly in } X(\om), \quad  \quad \quad \quad \quad \quad \grad u_k \weak \grad u \mbox{ weakly in } (L^p(\om))^N,& \notag \\
    & u_k(x) \to u(x)  \mbox{ pointwise a.e. in } \om,    \quad \quad |u_k(x)| \leq g(x) \mbox{ a.e. in } \om,& \\
&u_k \to u \mbox{ strongly in } L^r(\om),\notag &
\end{align}
where $r \in [p,p^*)$ and $g \in L^{p^*}(\om)$ with $p^* = \frac{pN}{N-p}$.\\
Additionally, as $\lv \tilde{\mathcal{J}_d}^{\prime}(u_k) \rv_{*} \to 0$, there exists a sequence $\{ t_k \}$ such that 
\begin{equation}\label{pointwise_cgs_derivative_functional}
    \left| \langle \mathcal{J}_d^{\prime}(u_k),v\rangle - t_k \int_{\om} |u_k|^{p-2} u_k v \right| \leq \e_k \lv v \rv
\end{equation}
for all $v \in X(\om)$. Taking $v = u_k$ in \eqref{pointwise_cgs_derivative_functional}, we deduce from the mixed interpolated Hardy inequality \eqref{interpolated_hardy_with_theta} that
\begin{align*}
    |t_k| & \leq \e_k \lv u_k \rv + \lv u_k \rv^p + \mu \int_{\om} \frac{|u_k|^p}{|x|^{p \theta}}  + d \int_{\om} (u_k^+)^{p} \leq \e_k \lv u_k \rv + C \lv u_k \rv^p + d.
\end{align*}
Thus, $\{ t_k \} $ is bounded. Now, consider the truncation functions $T_j : \real \to \real$ as 
\begin{equation*}
    T_j(t) =
    \begin{cases}
        t & \mbox{ if } |t| \leq j \\
        j\frac{t}{|t|} & \mbox{ if } |t| > j.
    \end{cases}
\end{equation*}
Then \eqref{pointwise_cgs_weak_properties} implies 
\begin{gather}\label{pointwise_cgs_eq1}
    \lim_{k \to \infty} \int_{\om} |\grad u |^{p-2} \grad u \grad( T_j( u_k - u)) ~ dx = 0 \\
    \label{pointwise_cgs_eq2}
    \lim_{k \to \infty} \rnnn \frac{F(u(x) - u(y)) (T_j(u_k - u)(x) - T_j(u_k -u)(y))}{|x - y |^{N+ps}}~dy ~dx = 0 \\
    \label{pointwise_cgs_eq0}
    \lim_{k \to \infty} \int_{\om} (u^+)^{p-1} T_j(u_k - u) ~ dx = 0
\end{gather}
Since for any measurable set $E \subset \om$, H\"{o}lder inequality and \eqref{pointwise_cgs_weak_properties} gives
$$ \left|\int_{E}  \frac{|u|^{p-2}uT_j(u_k - u)}{|x|^{p \theta}} \right| \leq \left(\int_{E} \frac{|u|^p}{|x|^{p \theta}}\right)^{\frac{p-1}{p}} \left( \int_{E} \frac{|T_j(u_k - u)|^p}{|x|^{p \theta}}\right)^p \leq C \left(\int_{E} \frac{|u|^p}{|x|^{p \theta}}\right)^{\frac{p-1}{p}}. $$
Thus, by Vitali convergence theorem, we deduce 
\begin{equation}\label{pointwise_cgs_eq3} 
    \lim_{k \to \infty} \int_{\om} \frac{|u|^{p-2}uT_j(u - u_k)}{|x|^{p \theta}} = 0.
\end{equation}
Using equations \eqref{pointwise_cgs_eq1},  \eqref{pointwise_cgs_eq2}, \eqref{pointwise_cgs_eq0}, and \eqref{pointwise_cgs_eq3} implies $\langle \mathcal{J}_d^{\prime}(u),T_j(u_k - u) \rangle = o_k(1)$. Thus, taking the test function $v = T_j( u_k - u )$ in \eqref{pointwise_cgs_derivative_functional} gives
\small{
\begin{equation}\label{eq4:pointwise_cgs}
    |\langle \mathcal{J}_d^{\prime}(u_k) - \mathcal{J}_d^{\prime}(u), T_j(u_k - u)\rangle | \leq t_k \left| \int_{\om} ( |u_k|^{p-2} u_k - |u|^{p-2}u)(T_j(u_k - u))  \right| + \e_k \lv T_j(u_k - u)\rv + o_k(1).
\end{equation}}
\noi From the inequality $(2.8)$ in \cite{silva_mixed}, and relation \eqref{eq4:pointwise_cgs}, we obtain
\begin{align*}
& \int_{\om} (|\grad u_k|^{p-2} \grad u_k - |\grad u|^{p-2}\grad u)\grad (T_j(  u_k -  u))\\
& \leq  \int_{\om} (|\grad u_k|^{p-2} \grad u_k - |\grad u|^{p-2}\grad u)\grad (T_j(  u_k -  u)) \\ 
& \quad + \rnnn \frac{[F(u_k(x) - u_k(y)) - F(u(x) - u(y))](T_j(u_k -u)(x) - T_j(u_k - u)(y))}{|x - y|^{N + p s}}  \\
& \leq  \mu \left| \int_{\om} \frac{(|u_k|^{p-2}u_k - |u|^{p-2}u)(T_j(u_k - u))}{|x|^{p \theta}}\right| + d \left| \int_{\om} ( (u_k^+)^{p-1} - (u^+)^{p-1})(T_j(u_k - u))  \right| \\
& \quad +   t_k \left| \int_{\om} ( |u_k|^{p-2} u_k - |u|^{p-2}u)(T_j(u_k - u)) \right| + \e_k \lv T_j(u_k - u)\rv + o_k(1).
\end{align*}
Using the strong convergence in \eqref{pointwise_cgs_weak_properties}, we deduce 
\begin{align*}
    \limsup_{k \to \infty} \int_{\om} (|\grad u_k|^{p-2} \grad u_k - & |\grad u|^{p-2}\grad u)(T_j( \grad u_k - \grad u))\\
    &\leq \mu \limsup_{k \to \infty} \left| \int_{\om} \frac{(|u_k|^{p-2}u_k - |u|^{p-2}u)(T_j(u_k - u))}{|x|^{p \theta}} \right| \\
    & = \mu \limsup_{k \to \infty} \left| \int_{\om} \frac{(|u_k|^{p-2}u_k)(T_j(u_k - u))}{|x|^{p \theta}} \right|\\
    & \leq \mu j \limsup_{k \to \infty} \left( \int_{\om} \frac{|u_k|^p}{|x|^{p \theta}}  \right)^{\frac{p-1}{p}} \left( \int_{\om} \frac{1}{|x|^{p \theta}} \right)^{\frac{1}{p}}\\
    & \leq  j C.
\end{align*}
Now, let $e_k(x) = [ |\grad u_k(x)|^{p-2} \grad u_k(x) - |\grad u(x)|^{p-2} \grad u(x) ]\grad (u_k(x) - u(x))$. Inequality \eqref{simon_inequality} imply that $e_k(x) \geq 0$. Consider the subsets of $\om$ as 
\begin{equation*}
    S_k^j = \{ x \in \om : |u_k(x) - u(x)| \leq j \}, \quad G_k^j = \{ x \in \om : |u_k(x) - u(x)| > j \}. 
\end{equation*}
Then for $\delta \in (0,1)$, we have 
\begin{align*}
\int_{\om} e_k^{\delta} &= \int_{S_k^j} e_k^{\delta} + \int_{G_k^j} e_k^{\delta}\\
& \leq \left( \int_{S_k^j} e_k \right)^{\delta} |S_k^j|^{1-\delta} + \left( \int_{G_k^j} e_k \right)^{\delta} |G_k^j|^{1-\delta} \\
& \leq (jC)^{\delta} |S_k^j|^{1-\delta} + (\bar{C})^{\delta} |G_k^j|^{1-\delta}.
\end{align*}
Since $|G_k^j| \to 0$ as $k \to \infty$, we obtain
$$0 \leq \limsup_{k \to \infty} \int_{\om} e_k^{\delta} \, dx \leq (j C)^{\delta} |\om|^{1-\delta}. $$
Taking $j \to 0^+$, we deduce that $e_k^{\delta} \to 0 $ in $L^{1} (\om)$. Subsequently, $e_k(x) \to 0$ a.e. in $\om$ as $k \to \infty$. Therefore, we have our result by \eqref{simon_inequality}.
\end{proof}

\noi Next, with the Lemma \ref{pointwise_gradient} in hand, we prove that the functional $\mathcal{J}_d$ satisfies the (P.-S.) condition on $S$.
\begin{lemma}\label{ps_condition}
$\tilde{\mathcal{J}}_d$ satisfies the (P.-S.) condition on $S$.
\end{lemma}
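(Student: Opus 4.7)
The plan is to show that for any sequence $\{u_k\}\subset S$ with $\tilde{\mathcal{J}}_d(u_k)\to c$ and $\lv\tilde{\mathcal{J}}_d^{\prime}(u_k)\rv_*\to 0$, some subsequence converges strongly in $X(\om)$ to a limit $u\in S$. By the first part of the proof of Lemma~\ref{pointwise_gradient}, after passing to a subsequence we have \eqref{pointwise_cgs_weak_properties}, $\grad u_k\to\grad u$ a.e.\ in $\om$, and the Lagrange multipliers $\{t_k\}$ in \eqref{pointwise_cgs_derivative_functional} are bounded; extracting once more, $t_k\to t$ for some $t\in\real$. Since $u_k\to u$ strongly in $L^p(\om)$ and $\lv u_k\rv_{L^p}=1$, we also have $u\in S$.

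Testing \eqref{pointwise_cgs_derivative_functional} with $v=u_k-u$, the right-hand side is $o_k(1)$: the quantity $\int_\om|u_k|^{p-2}u_k(u_k-u)=1-\int_\om|u_k|^{p-2}u_k u\to 0$ by $L^p$-convergence and $\lv u\rv_{L^p}=1$, while $\e_k\lv u_k-u\rv\to 0$ since $\lv u_k-u\rv$ is bounded. Expanding $\langle \mathcal{J}_d^{\prime}(u_k),u_k-u\rangle$ and subtracting $\langle \mathcal{J}_d^{\prime}(u),u_k-u\rangle$, which itself tends to $0$ (the two principal parts by weak convergence $u_k\weak u$ in $X(\om)$, the $(u^+)^{p-1}$ term by strong $L^p$ convergence, and the Hardy term $\int_\om|u|^{p-2}u(u_k-u)|x|^{-p\theta}\,dx$ by Vitali's theorem exactly as in the derivation of \eqref{pointwise_cgs_eq3}), together with the observation $\int_\om(|u_k|^{p-2}u_k-|u|^{p-2}u)(u_k-u)|x|^{-p\theta}\,dx\to 0$ (by an analogous Vitali argument powered by \eqref{interpolated_hardy_with_theta}), I obtain
\begin{align*}
&\int_\om\bigl(|\grad u_k|^{p-2}\grad u_k-|\grad u|^{p-2}\grad u\bigr)\cdot\grad(u_k-u)\,dx\\
&\qquad+\rnnn\frac{[F(u_k(x)-u_k(y))-F(u(x)-u(y))]\,[(u_k-u)(x)-(u_k-u)(y)]}{|x-y|^{N+ps}}\,dx\,dy=o_k(1).
\end{align*}

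Both integrands on the left are pointwise nonnegative by Simon's inequality~\eqref{simon_inequality}. For $p\geq 2$, \eqref{simon_inequality} bounds the left-hand side below by $C\bigl(\int_\om|\grad(u_k-u)|^p\,dx+[u_k-u]_{s,p}^p\bigr)$, hence $\lv u_k-u\rv\to 0$. For $1<p<2$, the degenerate form of \eqref{simon_inequality} yields smallness only of weighted integrals, which is transferred to $\lv u_k-u\rv^p\to 0$ via H\"older's inequality combined with the uniform bound on $\lv u_k\rv$, in the now-standard way. The main obstacle I foresee is the Hardy cross-integral $\int_\om(|u_k|^{p-2}u_k-|u|^{p-2}u)(u_k-u)|x|^{-p\theta}\,dx\to 0$: ruling out concentration at the origin against the critical weight requires a careful Vitali/equi-integrability argument built from the mixed interpolated Hardy inequality~\eqref{interpolated_hardy_with_theta}, mirroring the step leading to \eqref{pointwise_cgs_eq3} but now with $|u_k|^{p-2}u_k$ (depending on $k$) in place of the fixed factor $|u|^{p-2}u$; a minor subsidiary nuisance is the case split $p\geq 2$ vs.\ $1<p<2$ when invoking Simon's inequality.
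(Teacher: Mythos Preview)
Your approach diverges from the paper's at the crucial Hardy step, and the divergence creates a genuine gap when $\theta=1$. You want the cross-integral
\[
\int_\om\frac{(|u_k|^{p-2}u_k-|u|^{p-2}u)(u_k-u)}{|x|^{p\theta}}\,dx\longrightarrow 0
\]
via Vitali. The analogue of~\eqref{pointwise_cgs_eq3} works there only because the factor carrying the weight is the \emph{fixed} function $|u|^p/|x|^{p\theta}\in L^1$ and the other factor is the bounded truncation $T_j$. When you replace $u$ by $u_k$, equi-integrability of $|u_k|^p/|x|^{p\theta}$ is needed, and the mixed interpolated Hardy inequality~\eqref{interpolated_hardy_with_theta} only yields a uniform bound. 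For $\theta<1$ one can rescue this by H\"older against a dominating function $g\in L^r$ with $r\in\bigl(\tfrac{pN}{N-p\theta},p^*\bigr)$, but at $\theta=1$ that interval is empty: the embedding $W_0^{1,p}(\om)\hookrightarrow L^p(|x|^{-p}dx)$ is non-compact, so concentration at the origin cannot be excluded by equi-integrability alone. Consequently your displayed identity with only the two Simon-positive terms on the left is not established in the full range $\theta\in[s,1]$.

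The paper circumvents this by \emph{not} isolating the Hardy cross-term. Instead it uses the a.e.\ gradient convergence from Lemma~\ref{pointwise_gradient} together with boundedness to identify the weak limits $\int_\om|\grad u_k|^{p-2}\grad u_k\cdot\grad u\to\int_\om|\grad u|^p$, and similarly for the nonlocal and Hardy cross-terms; combined with Brezis--Lieb this rewrites $\langle \mathcal{J}'(u_k),u_k-u\rangle$ (not the difference with $\mathcal{J}'(u)$) as
\[
\int_\om|\grad(u_k-u)|^p+[u_k-u]_{s,p}^p-\mu\int_\om\frac{|u_k-u|^p}{|x|^{p\theta}}+o_k(1).
\]
Now the mixed Hardy inequality~\eqref{interpolated_hardy} applies with the full coefficients $(1,1,-\mu)$ rather than the Simon constants, and $\mu<\mu_0(\theta)$ makes the resulting lower bound strictly positive. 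This is what allows the argument to close uniformly in $\theta\in[s,1]$; your Simon-based route, even if the Hardy term could be dropped, would produce the constant $C$ from~\eqref{simon_inequality} in front of $\lv u_k-u\rv^p$, which for $p>2$ is strictly less than $1$ and need not dominate $\mu/\mu_0(\theta)$.
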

\begin{proof}
Let $\{ u_k \}$ be a Palais-Smale sequence of $\tilde{\mathcal{J}}_d$, then there exists $K > 0$ and a sequence $\{ t_k \}$ such that
\begin{equation}\label{ps_eq1}
 \mathcal{J}_d(u_k) \leq K    
\end{equation}
and
\begin{align}\label{ps_eq2}
&\left| \int_{\om} |\grad u_k|^{p-2} \grad u_k \grad v + \rnnn \frac{F(u_k(x) - u_k(y)) (v(x) - v(y))}{|x - y|^{N+ps}} \, dx \, dy  \right. \notag\\
&\qquad \left. -  \mu \int_{\Omega} \frac{|u_k|^{p-2}u_k v}{|x|^{p\theta}} \, dx - d\int_{\Omega} (u_k^+)^{p-2}u_k^+ v \, dx - t_k \int_{\Omega} |u_k|^{p-2}u_k v \, dx \right| \leq \epsilon_k \|v\|_{X(\om)},
\end{align} 
for all $v \in X(\om),$ and for some $ \epsilon_k > 0$ such that $\e_k \to 0$. From \eqref{ps_eq1}, using mixed interpolated Hardy inequality \eqref{interpolated_hardy} we deduce that $\{u_k\}$ is bounded in $X(\om)$. Indeed, we have 
$$ \left( 1 - \frac{\mu \xi}{C_H}\right) \lv u_k \rv^p \leq K + d  \int_{\om} (u_k^+)^p \leq K +d. $$
Therefore, there is a subsequence denoted by $\{u_k\}$ and $u \in X(\om)$ such that $u_k \rightharpoonup u$ weakly in $X(\om)$, and $u_k \to u$ strongly in $L^q(\Omega)$ for all $1 \leq q < pN/(N-p)$. Taking $v = u_k$ as a test function in \eqref{ps_eq2}, we have
\small{
\begin{align*}
|t_k| \leq & \left| \int_{\om} | \grad u_k |^p + \int_{\Omega} \int_{\Omega} \frac{|u_k(x) - u_k(y)|^{p}}{|x - y|^{N+ps}} \, dx \, dy  -  \int_{\Omega} \frac{|u_k|^{p-2} }{|x|^{p \theta}} \, dx  + d \int_{\Omega} (u_k^+)^{p}\, dx + \epsilon_k \|u_k \|_{X(\om)} \right| \leq C.
\end{align*}}

\noi Hence $\{t_k\}$ is a bounded sequence.
Next, we claim that $u_k \to u$ strongly in $X(\om)$. Taking $v = u_k - u$ as a test function in \eqref{ps_eq2}, we have
\begin{equation}\label{ps_eq3}
\begin{split}
& \left| \int_{\om} |\grad u_k|^{p-2} \grad u_k \grad (u_k - u)  + \rnnn \frac{F(u_k(x) - u_k(y)) ((u_k(x) - u(x)) - (u_k(y) - u(y)))}{|x - y|^{N+p s}} \right. \\
&\left. - \mu \int_{\Omega} \frac{|u_k|^{p-2}u_k (u_k - u)}{|x|^{p\theta}} \, dx \right| \leq O(\epsilon_k) + d \lv u_k^+ \rv_{p}^{p-1} \|u_k - u\|_{p}^p + |t_k| \|u_k\|_{p}^{p-1} \|u_k - u\|_{p} \to 0, \quad
\end{split}
\end{equation}
i.e., $|\langle \mathcal{J}^{\prime}(u_k), u_k -u\rangle| \to 0$ as $k \to \infty$. By Brezis-Lieb Lemma 2.2, we have
\begin{equation}\label{ps_eq4}
\begin{split}
\int_{\om} | \grad (u_k - u)|^p & = \int_{\om} |\grad u_k|^p - \int_{\om} |\grad u|^p + o_k(1)\\
[u_k -u]_s^p  & =  [u_k]_s^p - [u]_s^p + o_k(1)\\
\int_{\om} \frac{|u_k - u|^p}{|x|^{p \theta}} & =  \int_{\om} \frac{|u_k|^p}{|x|^{p \theta}} - \int_{\om} \frac{|u|^p}{|x|^{p \theta}} + o_k(1).
\end{split}
\end{equation}
\noi Let $p^{\prime} = \frac{p}{p-1}$ be the conjugate exponent of $p$. Then from Lemma \ref{pointwise_gradient} and $u_k(x) \to u(x)$ pointwise a.e. in $\om$ as $k \to \infty$, we deduce that 
\begin{align*}
    |\grad u_k(x)|^{p-2} \grad u_k(x) &\to |\grad u(x)|^{p-2} \grad u(x) &\mbox{ pointwise a.e. in } \om ,\\
    \frac{|u_k(x)|^{p-2}u_k(x)}{|x|^{\frac{p \theta}{p^{\prime}}}} &\to \frac{|u(x)|^{p-2}u(x)}{|x|^{\frac{p \theta}{p^{\prime}}}} &\mbox{ pointwise a.e. in } \om,  \\
    \frac{F(u_k(x) - u_k(y))}{|x-y|^{\frac{N+sp}{p^{\prime}}}} &\to \frac{F(u(x) - u(y))}{|x-y|^{\frac{N+sp}{p^{\prime}}}} &\mbox{ pointwise a.e. in } \mathbb{R}^{2N},
\end{align*}
as $k \to \infty$.  Moreover, $\{ |\grad u_k|^{p-2} \grad u_k\}$ and $\left\{ \frac{|u_k|^{p-2}u_k}{|x|^{\frac{p \theta}{p^{\prime}}}}\right\}$ are bounded in $L^{p^{\prime}}(\om)$ and $\left\{ \frac{F(u_k(x) -u_k(y))}{|x-y|^{\frac{N +sp}{p^{\prime}}}} \right\}$ is bounded in $L^{p^{\prime}}(\mathbb{R}^{2N})$. So, all the $3$ sequences will converges to some weak limits. But, as weak and pointwise limit coincides, we have 
\begin{equation}\label{ps_eq5}
    \begin{split}
        \int_{\om} |\grad u_k|^{p-2} \grad u_k \grad u &\to \int_{\om} |\grad u|^p,\\
        \int_{\om} \frac{|u_k|^{p-2}u_k u}{|x|^{p \theta}} dx &\to \int_{\om} \frac{|u|^p}
        {|x|^{p \theta}} dx,\\
        \rnnn \frac{F(u_k(x) - u_k(y))(u(x) - u(y))}{|x-y|^{N+sp}} &\to \rnnn \frac{|u(x) - u(y)|^p}{|x-y|^{N+sp}},
    \end{split}
\end{equation}
as $k \to \infty$.
Using the mixed interpolated Hardy inequality \eqref{interpolated_hardy_with_theta}, \eqref{ps_eq4}, \eqref{ps_eq5}, and \eqref{ps_eq3}, we obtain
\begin{align*}
    \left( 1 - \frac{\mu \xi}{C_H}\right)& \int_{\om} | \grad (u_k - u)|^p + \left( 1 - \frac{\mu (1 - \xi)}{C_{N,p,s}}\right)\rnnn \frac{|(u_k - u)(x) - (u_k- u)(y)|^{p}}{|x - y|^{N + p s}}\\
    &\leq  \int_{\om} | \grad (u_k - u)|^p + \rnnn \frac{|(u_k - u)(x) - (u_k- u)(y)|^{p}}{|x - y|^{N + p s}} \, dx \, dy - \mu \int_{\om} \frac{|u_k - u|^p}{|x|^{p \theta}} \\
   & \leq \int_{\om} |\grad u_k|^p - \int_{\om} |\grad u|^p + [u_k]_s^p - [u]_s^p + \int_{\om} \frac{|u_k|^p}{|x|^{p \theta}} - \int_{\om} \frac{|u|^p}{|x|^{p \theta}} + o_k(1) \\
    &= \langle \mathcal{J}^{\prime} (u_k),u_k -u \rangle + o_k(1).
\end{align*}
This implies that the sequence $\{u_k\}$ converges strongly to $u$ in $X(\om)$ as $k \to \infty$.
\end{proof}

\section{Eigenvalue Problem for \texorpdfstring{$\mathcal{T}$}{T}}
In this section, we establish the existence of the first eigenvalue of the operator $\mathcal{T}$ and study some of its properties. We recall that $\mathcal{T}$ is defined as 
\begin{equation*}
    \mathcal{T}(u) = (- \plap + \pfrac) u - \mu \frac{|u|^{p-2}u}{|x|^{p \theta}}.
\end{equation*}

\noi Firstly, we would like to show that a sequence of eigenvalues of the operator $\mathcal{T}$ exists. For this, we employ the index theory. Take the class
\begin{equation*}
    \mathcal{A} = \{ A \subset X(\om) : A \mbox{ is closed}, A = - A\}.
\end{equation*}
In this class, the genus is defined for $A \in \mathcal{A}$ as 
\begin{equation*}\label{genus}
\begin{split}
\gamma^* (A) = 
\begin{cases}
0 & \mbox{ if } A = \emptyset, \\
\inf\{ m \in \ntrl \cup \{ 0 \} ; \mbox{ there exists } h \in C(A;\mathbb{R}^m \setminus \{ 0 \}), h(-u) = h(u) \}  & \mbox{ if } A \neq \emptyset,
\end{cases}
\end{split}
\end{equation*}
if no such infimum exists then we take $\gamma^*(A) = \infty $. Now, we have our theorem.
\begin{theorem}\label{existence_of_eigenvalues}
Let $2 \leq p <N$. Then, there exists a sequence of increasing eigenvalues $\la_k$ of the operator $\mathcal{T}$ with the characterization 
    \begin{equation*}
        \la_k = \inf_{K \subset \mathcal{A}_k} \sup_{u \in K} \mathcal{J}(u)
    \end{equation*}
    where $\mathcal{A}_k = \{ K \in \mathcal{A} : K \subset S, \gamma^*(K) \geq k \}$.
\end{theorem}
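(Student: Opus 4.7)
The plan is to apply Theorem \ref{thm:existence_ev_struwe} (Struwe's genus-based variational theorem) to the restriction $\tilde{\mathcal{J}} := \mathcal{J}|_S$ of the energy functional defined in \eqref{energy_functional} on the unit sphere $S$ from \eqref{unit_sphere}. Consequently, I would have to verify the four structural hypotheses: that $\mathcal{J}$ is even, that $S$ is a complete symmetric $C^{1,1}$-submanifold of $X(\om) \setminus \{0\}$, that $\mathcal{J}$ is bounded below on $S$, and that the $(P.\text{-}S.)$ condition holds for $\tilde{\mathcal{J}}$.

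Evenness of $\mathcal{J}$ is immediate since each term depends on $u$ only through $|\nabla u|^p$, $|u(x)-u(y)|^p$, or $|u|^p$. For the manifold structure of $S$, I would note that $I(u) = \int_\om |u|^p$ satisfies $\langle I'(u), u\rangle = p \neq 0$ on $S$, so $S$ is a regular level set; the hypothesis $p \geq 2$ ensures that $t \mapsto |t|^{p-2}t$ is locally Lipschitz, so $I \in C^{1,1}$ and $S$ inherits the $C^{1,1}$ structure. Completeness and symmetry of $S$ are clear. Boundedness from below on $S$ follows from the mixed interpolated Hardy inequality \eqref{interpolated_hardy}: for every $u \in S$,
$$\mathcal{J}(u) \geq \left(1 - \frac{\mu \xi}{C_H}\right)\int_\om |\nabla u|^p + \left(1 - \frac{\mu(1-\xi)}{C_{N,p,s}}\right)[u]_{s,p}^p \geq 0,$$
since $\mu \in (0,\mu_0(\theta))$ forces both bracketed constants to be strictly positive. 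The $(P.\text{-}S.)$ condition is exactly Lemma \ref{ps_condition} applied with $d = 0$.

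I would then check that $\mathcal{A}_k \neq \emptyset$ for every $k \in \ntrl$, so that the min-max values are finite. Given any $k$-dimensional subspace $V_k \subset X(\om)$ (e.g., generated by $k$ smooth, compactly supported functions in $\om$ with disjoint supports), the set $V_k \cap S$ is homeomorphic to $\mathbb{S}^{k-1}$, compact, symmetric, and of genus exactly $k$, so it lies in $\mathcal{A}_k$ and yields $\la_k \leq \sup_{V_k \cap S} \mathcal{J} < \infty$. The nesting $\mathcal{A}_{k+1} \subset \mathcal{A}_k$ immediately gives the monotonicity $\la_k \leq \la_{k+1}$. Theorem \ref{thm:existence_ev_struwe} then produces a critical point $u_k \in S$ of $\tilde{\mathcal{J}}$ at each level $\la_k$.

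Finally, I would identify these critical points as eigenfunctions of $\mathcal{T}$ via the Lagrange multiplier rule: there is $\la \in \real$ with $\mathcal{J}'(u_k) = \la I'(u_k)$ in $X(\om)^*$, that is, for all $v \in X(\om)$,
\begin{align*}
&\int_\om |\nabla u_k|^{p-2} \nabla u_k \cdot \nabla v + \rnnn \frac{F(u_k(x)-u_k(y))(v(x)-v(y))}{|x-y|^{N+ps}} - \mu \int_\om \frac{|u_k|^{p-2}u_k v}{|x|^{p\theta}} \\
&\qquad = \la \int_\om |u_k|^{p-2} u_k v.
\end{align*}
Testing with $v = u_k$ and using $I(u_k) = 1$ gives $\la = \mathcal{J}(u_k) = \la_k$, so $u_k$ is an eigenfunction of $\mathcal{T}$ with eigenvalue $\la_k$. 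The step I anticipate as the main obstacle is verifying the $C^{1,1}$-manifold regularity required by Struwe's theorem, since this is precisely where the restriction $p \geq 2$ is forced (through the Lipschitz character of $t \mapsto |t|^{p-2}t$); the boundedness from below and the pointwise gradient convergence underpinning $(P.\text{-}S.)$ have already been prepared in Section $3$.
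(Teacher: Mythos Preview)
Your proposal is correct and follows exactly the same route as the paper: verify that $\mathcal{J}$ is even, that $S$ is a complete symmetric $C^{1,1}$-manifold, that $\mathcal{J}$ is bounded below on $S$ via the mixed interpolated Hardy inequality, and that the (P.-S.) condition holds by Lemma~\ref{ps_condition} with $d=0$, then invoke Theorem~\ref{thm:existence_ev_struwe}. In fact you supply more detail than the paper does (the nonemptiness of $\mathcal{A}_k$, the Lagrange multiplier identification $\lambda = \mathcal{J}(u_k)$, and the reason $p\geq 2$ is needed for the $C^{1,1}$ regularity of $I$), all of which are implicit in the paper's three-line argument.
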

\begin{proof}
Clearly, $\mathcal{J}$ is even, and $S$ is a complete, symmetric, and $C^{1,1}$-manifold in $X(\om)$. Additionally, $\mathcal{J}$ is bounded below on $S$ and satisfies the (P.-S.) condition on $S$ (see Lemma \ref{ps_condition} for $d = 0$). Then, the existence of the eigenvalues $\la_k$ with the given characterization comes from Theorem \ref{thm:existence_ev_struwe}.
\end{proof}

\noi Now, we prove some results specific to the first eigenvalue.
\begin{theorem}
Let $1 < p < N$. The first eigenvalue defined in (\ref{first_eigenvalue}) is achieved by a nonnegative function $\phi_1$. In fact, $\phi_1 > 0$.
\end{theorem}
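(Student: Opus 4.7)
The plan proceeds in three stages: attain the infimum, produce a nonnegative minimizer, and upgrade to strict positivity.

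For attainment, I would take a minimizing sequence $\{u_k\} \subset S$ for $\mathcal{J}$. The mixed interpolated Hardy inequality \eqref{interpolated_hardy} together with $\mu \in (0,\mu_0(\theta))$ yields
$$\mathcal{J}(u_k) \geq \left(1 - \frac{\mu\xi}{C_H}\right)\int_\om |\grad u_k|^p + \left(1 - \frac{\mu(1-\xi)}{C_{N,p,s}}\right)[u_k]_{s,p}^p,$$
with both coefficients strictly positive, so $\{u_k\}$ is bounded in $X(\om)$. Passing to a subsequence, $u_k \weak u$ in $X(\om)$, strongly in $L^p(\om)$, and pointwise a.e.\ in $\om$, whence $u \in S$. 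I then plan to establish weak lower semicontinuity of $\mathcal{J}$ via a Brezis--Lieb type splitting
$$\mathcal{J}(u_k) = \mathcal{J}(u) + \mathcal{J}(u_k - u) + o(1),$$
which is standard for the two seminorms (compare \eqref{ps_eq4}) and follows for the Hardy term from pointwise convergence combined with the domination furnished by \eqref{interpolated_hardy}. Because \eqref{interpolated_hardy} also implies $\mathcal{J}(u_k - u) \geq 0$, this forces $\mathcal{J}(u) \leq \liminf_k \mathcal{J}(u_k) = \lambda_1$, while $\mathcal{J}(u) \geq \lambda_1$ is automatic from $u \in S$. Hence $u$ attains $\lambda_1$.

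For the sign, I would replace $u$ by $\phi_1 := |u|$. Then $\phi_1 \in S$, $|\grad \phi_1| = |\grad u|$ a.e., the reverse triangle inequality gives $\bigl|\phi_1(x) - \phi_1(y)\bigr| \leq |u(x) - u(y)|$ pointwise, and $\phi_1^p/|x|^{p\theta} = |u|^p/|x|^{p\theta}$. Combining these three observations yields $\mathcal{J}(\phi_1) \leq \mathcal{J}(u) = \lambda_1$, so $\phi_1$ is itself a nonnegative minimizer. For strict positivity, I would view $\phi_1$ as a weak supersolution of a homogeneous mixed equation. Rewriting the Euler--Lagrange identity $\mathcal{T}(\phi_1) = \lambda_1 \phi_1^{p-1}$ as
$$-\plap \phi_1 + \pfrac \phi_1 = \left(\lambda_1 + \frac{\mu}{|x|^{p\theta}}\right)\phi_1^{p-1} \geq 0,$$
I would invoke a strong minimum principle for the mixed local--nonlocal $p$-Laplacian to conclude either $\phi_1 \equiv 0$ or $\phi_1 > 0$ in $\om$; the normalization $\|\phi_1\|_p = 1$ rules out the former.

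The step I expect to be most delicate is this last one, since the singular Hardy coefficient a priori interferes with pointwise arguments near the origin. The crucial mechanism is nonlocal: at any would-be interior zero $x_0 \in \om$ one has
$$\pfrac \phi_1(x_0) = 2 \int_{\rnn} \frac{\phi_1(y)^{p-1}}{|x_0 - y|^{N+ps}}\,dy > 0$$
as soon as $\phi_1 \not\equiv 0$, and combining this with the local contribution through a standard comparison on a small ball disjoint from the origin delivers the required contradiction.
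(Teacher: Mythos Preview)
Your attainment argument has a genuine gap. The Brezis--Lieb splitting $\mathcal{J}(u_k) = \mathcal{J}(u) + \mathcal{J}(u_k - u) + o(1)$ requires, for the local term, the identity
\[
\int_\om |\grad u_k|^p = \int_\om |\grad u|^p + \int_\om |\grad(u_k - u)|^p + o(1),
\]
and Brezis--Lieb needs pointwise a.e.\ convergence of $\grad u_k$ to $\grad u$ --- which weak convergence in $X(\om)$ alone does not provide. The reference you cite, \eqref{ps_eq4}, is invoked in the paper only \emph{after} Lemma~\ref{pointwise_gradient} has secured $\grad u_k \to \grad u$ a.e., and that lemma applies to Palais--Smale sequences, not to arbitrary minimizing sequences. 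This is precisely why the paper first applies Ekeland's variational principle to upgrade the minimizing sequence to a (P.-S.) sequence $\{v_k\}$, then invokes Lemma~\ref{pointwise_gradient} for the gradient convergence, and only then passes to the limit. For $\theta \in [s,1)$ you could repair your route by proving directly that $u \mapsto \int_\om |u|^p/|x|^{p\theta}$ is weakly continuous on $X(\om)$ (split $\om$ into $B_\epsilon(0)$ and its complement, and on the ball use $|x|^{-p\theta} = |x|^{-p}\,|x|^{p(1-\theta)} \leq \epsilon^{p(1-\theta)}|x|^{-p}$ together with classical Hardy to make the contribution uniformly small); then $\mathcal{J}$ is a sum of a weakly lower semicontinuous and a weakly continuous term. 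But for $\theta = 1$ that compactness fails, and your splitting is unavailable without the Ekeland step.

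Your positivity argument is also more heuristic than proof. The pointwise evaluation $\pfrac \phi_1(x_0) > 0$ at a putative interior zero presupposes enough regularity to interpret the equation classically at $x_0$, which you have not established (and the regularity theorem in the paper comes later and is only local away from the origin). The paper instead treats $\phi_1 \geq 0$ as a weak supersolution of $-\plap + \pfrac$ and applies the weak Harnack inequality of Garain--Kinnunen (Theorem~8.4 of \cite{garain2022regularity}) on small balls, obtaining $\operatorname{ess\,inf}_{B_{r/2}} \phi_1 > 0$ directly from the integral positivity of $\phi_1$ without any pointwise computation.
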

\begin{proof}
Let $\{ u_k \}$ be the minimizing sequence in $S$ such that $\mathcal{J}(u_k) \to \lambda_1$ as $k \to \infty$. Then clearly, $\mathcal{J}(u_k)$ is bounded. Also, mixed interpolated Hardy inequality \eqref{interpolated_hardy} yields $$C \geq \mathcal{J}(u_k) \geq \left( 1 - \frac{\mu \xi}{C_H}\right) \int_{\om} |\grad u_k|^p + \left( 1 - \frac{\mu(1-\xi)}{C_{N,p,s}} \right)  \int_{\rnn} \int_{\rnn} \frac{|u_k(x) - u_k(y)|^p}{|x-y|^{N+sp}}~dx ~ dy,$$ which implies $\{u_k\}$ is a bounded sequence in $X(\om)$. Now, using the Ekeland variational principle, there exists a sequence $\{ v_k \}$ such that 
\begin{equation*}
    \mathcal{J}(v_k) \leq \mathcal{J}(u_k), \; \lv u_k - v_k \rv \leq \frac{1}{k},
\end{equation*}
\begin{equation*}
    \mathcal{J}(v_k) \leq \mathcal{J}(u) + \frac{1}{k} \lv v_k - v \rv \; \mbox{ for all } u \in S.
\end{equation*}
 Next, arguing as in Theorem $3.1$ of \cite{sarika_fucik_pfractional}, we obtain $\{ v_k \}$ to be the Palais-Smale sequence. Moreover, from the choice of $\{ v_k \}$, it is bounded and up to a subsequence $v_k \weak \phi_1$ weakly in $X(\om)$. From Lemma \ref{pointwise_gradient}, we get pointwise convergence of the gradients. Also, the weak convergence of $\{v_k\}$ gives $\langle \mathcal{J}^{\prime}(\phi_1), v_k - \phi_1 \rangle \to 0$ and from definition, we have $\langle \mathcal{J}^{\prime}(v_k), v_k - \phi_1 \rangle \to 0$ as $k \to \infty$. Thus, we deduce that 
\begin{align*}
    \left| \int_{\om} |\grad v_k|^{p-2} \grad v_k \grad( v_k - \phi_1) \right.\notag  + \int_{\rnn} \int_{\rnn} \frac{F(v_k(x) - v_k(y))((v_k(x) - \phi_1(x)) - (v_k(y) - \phi_1(y))) }{|x-y|^{N+sp}}\notag \\
    \left.- \mu \int_{\om} \frac{|v_k|^{p-2}v_k (v_k - \phi_1)}{|x|^{p\theta}}\right| \leq O(\e_k) + |t_k| \lv v_k\rv_p^{p-1} \lv v_k - \phi_1 \rv_p \to 0
\end{align*}
as $k \to \infty$, where $F(t) = |t|^{p-2} t$. Thus, by the Brezis-Lieb Lemma, we have 
 \begin{equation*}
 \begin{split}
\int_{\om} |\grad (v_k - \phi_1)|^{p} dx  &= \int_{\om} |\grad v_k |^{p} dx - \int_{\om} |\grad \phi_1|^p dx + o_k(1),\\
[v_k - \phi_1]_s^p &= [v_k]_s^p - [\phi_1]_s^p+o_k(1),\\
 \int_{\om} \frac{|v_k - \phi_1|^p}{|x|^{p\theta}} &= \int_{\om} \frac{|v_k|^p}{|x|^{p\theta}} - \int_{\om} \frac{|\phi_1|^p}{|x|^{p\theta}} +o_k(1).
\end{split}
 \end{equation*}
 Using the above relations and mixed interpolated Hardy inequality, we argue as in Lemma \ref{ps_condition} and deduce that 
 \begin{align*}
     o_k(1)  \geq &  \left| \left( 1 - \frac{\mu \xi}{C_H}\right) \int_{\om} |\grad v_k - \grad \phi_1|^p \right. \\
     &\left. +  \left( 1 - \frac{\mu(1-\xi)}{C_{N,p,s}} \right)  \int_{\rnn} \int_{\rnn} \frac{|(v_k(x) - \phi_1(x)) - (v_k(y)- \phi_1(y))|^p}{|x-y|^{N+sp}}\, dx dy \right|.
 \end{align*}
 Thus, the sequence $\{v_k\}$ converges strongly to $\phi_1$ as $k \to \infty$. Hence, we obtain 
 \begin{equation*}
      - \plap \phi_1 + \pfrac \phi_1 - \mu \frac{|\phi_1|^{p-2}\phi_1}{|x|^{p \theta}} = \lambda_1 |\phi_1|^{p-1} \phi_1 \mbox{ in } {C_{c}^{\infty}}(\om).
 \end{equation*}
Moreover, we have $\int_{\om} |\phi_1|^p dx = 1$. This implies $\phi_1 \neq 0$. As the fractional term is present in the functional, if $\phi_1$ is not of constant sign, by the inequality
\begin{align*}
    \big||\phi_1(x)| - |\phi_1(y)|\big| \leq | \phi_1(x) - \phi_1(y)|,
\end{align*}
we have $\mathcal{J}(|\phi_1|) < \mathcal{J}(\phi_1)$. But, as $\la_1 \leq \mathcal{J}(|\phi_1|) < \mathcal{J}(\phi_1) = \la_1 $. So, $\phi_1$ has to be nonnegative. \\
Next, to prove positivity of $\phi_1$, observe that for any nonnegative $w \in \sobop(\om)$ we have 
\begin{align*} 
\int_{\om} |\grad \phi_1|^{p-2} \grad \phi_1 \grad w + \rnnn \frac{F(\phi_1(x) - \phi_1(y))(w(x) - w(y))~ dx dy}{|x-y|^{N+ps}} \\
= \int_{\om} \frac{\phi_1^{p-1} w}{|x|^{p \theta}} + \la_1 \int_{\om} \phi_1^{p-1} w ~ dx \geq 0.
\end{align*}
Now, applying Theorem $8.4$ of \cite{garain2022regularity} to obtain the following inequality
\begin{equation*}
    \left( \frac{1}{|B(x_0,\frac{r}{2})|} \int_{B(x_0,\frac{r}{2})} u^l \right)^{\frac{1}{l}} \leq C  ~ \underset{B(x_0,r)}{\mbox{ess inf}} ~ u, 
\end{equation*}
where $B(x_0,r) \subset \om,  r \in (0,1)$ and $l > 0$ is some positive number. This gives positivity of $\phi_1$. 
\end{proof}
\noi Next, we prove that the eigenfunctions associated with the first eigenvalue are constant multiple of each other.
\begin{theorem}
The first eigenvalue $\lambda_1$ defined in (\ref{first_eigenvalue}) is simple.
\end{theorem}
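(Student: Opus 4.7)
The plan is to combine the continuous Picone identity (Proposition \ref{continuous_picone_identity}) for the local $p$-Laplacian with the discrete Picone identity (Proposition \ref{discrete_picone_identity}) for the fractional $p$-Laplacian, together with an $\e$-regularization to handle the singular denominator in the test function. Let $\phi$ and $\psi$ be two eigenfunctions associated with $\la_1$; by the preceding theorem, both may be assumed strictly positive. The goal is to show $\phi = k\,\psi$ a.e.\ in $\om$ for some constant $k > 0$.

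First, I would use $w_\e := \phi^p/(\psi + \e)^{p-1}$ with $\e > 0$ as a test function in the weak formulation of $\mathcal{T}(\psi) = \la_1 \psi^{p-1}$. This $w_\e$ is admissible in $X(\om)$ thanks to local boundedness of the eigenfunctions (via a Moser iteration argument, to be established in Section $6$) together with the uniform lower bound $\psi + \e \geq \e$. This produces the identity
\begin{align*}
\int_\om |\grad \psi|^{p-2} \grad \psi \cdot \grad w_\e &+ \rnnn \frac{F(\psi(x) - \psi(y))(w_\e(x) - w_\e(y))}{|x - y|^{N + ps}} \, dx\, dy \\
&= \mu \int_\om \frac{\psi^{p-1} w_\e}{|x|^{p\theta}} \, dx + \la_1 \int_\om \psi^{p-1} w_\e \, dx.
\end{align*}
Applying the continuous and discrete Picone identities for the choice $u = \phi,\ v = \psi + \e$ yields the pointwise bounds $|\grad \psi|^{p-2}\grad \psi \cdot \grad w_\e \le |\grad \phi|^p$ and $F(\psi(x) - \psi(y))(w_\e(x) - w_\e(y)) \le |\phi(x) - \phi(y)|^p$. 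Substituting into the identity above and letting $\e \to 0^+$ via dominated convergence (with these Picone bounds serving as $\e$-independent integrable majorants, and $\psi^{p-1} w_\e \nearrow \phi^p$ pointwise), one obtains $\mathcal{J}(\phi) \ge \la_1 \int_\om \phi^p$. Since $\phi$ is itself an eigenfunction, equality holds throughout, forcing equality in both Picone identities a.e. The equality case of Proposition \ref{continuous_picone_identity} then gives $\phi = k\,\psi$ a.e.\ in $\om$, proving simplicity.

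The main technical obstacle is the rigorous limit $\e \to 0^+$ in the nonlocal Picone term, where an $\e$-independent integrable majorant is needed; this is provided by $|\phi(x) - \phi(y)|^p/|x - y|^{N+ps}$ (the Picone upper bound itself) since $\phi \in X(\om)$. The admissibility of $w_\e$ as a test function likewise requires $L^\infty$-estimates on $\phi$ and $\psi$, which follow from the regularity results in Section $6$; the Hardy term is absorbed harmlessly since $\psi^{p-1} w_\e /|x|^{p\theta} \to \phi^p/|x|^{p\theta}$ is controlled by the mixed interpolated Hardy inequality \eqref{interpolated_hardy}.
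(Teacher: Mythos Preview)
Your overall strategy---Picone identities plus an $\e$-regularization of the denominator---matches the paper's. However, there is a genuine gap in the admissibility of your test function $w_\e = \phi^p/(\psi+\e)^{p-1}$. To show $w_\e \in X(\om)$ you need $\phi \in L^\infty(\om)$ globally: the gradient of $w_\e$ contains a term $p\,\phi^{p-1}\grad\phi/(\psi+\e)^{p-1}$, and after bounding the denominator below by $\e^{p-1}$ you still need $\int_\om \phi^{p(p-1)}|\grad\phi|^p < \infty$, which does not follow from $\phi \in W_0^{1,p}(\om)$ without an $L^\infty$ bound. The regularity result in Section~6 gives only $\phi \in L^\infty_{loc}(\om\setminus\{0\})$, not global boundedness; and for Hardy-type operators the eigenfunctions are in general \emph{unbounded} at the singularity $x=0$, so no global $L^\infty$ estimate is available. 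Your appeal to ``local boundedness'' therefore does not suffice to make $w_\e$ an admissible test function.

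The paper fixes precisely this issue by inserting a second approximation layer: it takes a sequence $\psi_k \in C_c^\infty(\om)$ with $\psi_k \geq 0$ and $\psi_k \to \phi$ in $X(\om)$, and uses $w_k := \psi_k^p/(\psi + 1/k)^{p-1}$ as the test function. Since each $\psi_k$ is automatically in $L^\infty(\om)$, the required estimates for $w_k \in X(\om)$ go through with constants depending on $k$ and $\lv\psi_k\rv_\infty$. One then applies the Picone identities to $L_c(\psi_k,\psi+1/k)$ and $L(\psi_k,\psi+1/k)$, passes to the limit $k\to\infty$ via Fatou, and concludes $\int_\om L_c(\phi,\psi) + \iint L(\phi,\psi) = 0$. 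Your argument can be repaired by incorporating this smooth approximation of the numerator.
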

\begin{proof}
Taking $\phi_1$ and $\phi_2$ to be two positive eigenfunctions associated with the first eigenvalue $\lambda_1$. Consider $\{ \psi_k \}$ be to a sequence of functions in $C_c^{\infty}(\real)$ such that $\psi_k \geq 0, \psi_k(x) = 0 $ in $\rnn \setminus \om, \psi_k \to \phi_1$ in $X(\om)$ and convergent a.e. in $\om$. Then we have 
    \begin{align} \label{simple_eqn1}
        0 = & \int_{\om} |\grad \phi_1|^p + \int_{\rnn} \int_{\rnn} \frac{|\phi_1(x) - \phi_1(y)|^p}{|x - y|^{N + p s}} dx dy  - \mu \int_{\Omega} \frac{|\phi_1|^p}{|x|^{p \theta}} dx - \lambda_1 \int_{\Omega} |\phi_1|^p dx \notag \\
        = & \lim_{k \to \infty} \left[ \int_{\om} |\grad \psi_k|^p +  \int_{\Omega} \frac{|\psi_{k}(x) - \psi_k(y)|^p}{|x - y|^{N+ps}} dx dy 
    - \mu \int_{\Omega} \frac{|\psi_k|^p}{|x|^{p \theta}} dx - \lambda_1 \int_{\Omega}  |\psi_k|^p dx \right].
    \end{align}

\noi Let $w_k := \frac{\psi_k^p}{\left( \phi_2 + \frac{1}{k} \right)^{p - 1}}$. Then, we show that $w_k \in X(\om)$. Now, as in \cite{sarika_fucik_pfractional}, we deduce

\begin{align*}
    &|w_k(x) - w_k(y)| \\
    &\leq \left| \frac{\psi_k^p(x)}{\left( \phi_2 + \frac{1}{k} \right)^{p - 1}(x)} - \frac{\psi_k^p(x)}{\left( \phi_2 + \frac{1}{k} \right)^{p - 1}(y)} \right| 
    + \left| \frac{\psi_k^p(x)}{\left( \phi_2 + \frac{1}{k} \right)^{p - 1}(y)} - \frac{\psi_k^p(y)}{\left( \phi_2 + \frac{1}{k} \right)^{p - 1}(y)} \right| \\
    &\leq 2 p k^{p-1} \|\psi_k\|_\infty^{p - 1} |\psi_k(x) - \psi_k(y)| + \|\psi_k\|_\infty^p (p - 1) \left( \frac{|\phi_2(x) - \phi_2(y)|}{\left( \phi_2 + \frac{1}{k} \right)^{p - 1}(x)} + \frac{|\phi_2(x) - \phi_2(y)|}{\left( \phi_2 + \frac{1}{k} \right)^{p - 1}(y)} \right) \\
    &\leq C_{k,p,\lv \psi_k \rv_{\infty}} ( | \psi_k(x) - \psi_k(y)| + |\phi_2(x) - \phi_2(y)|)
\end{align*}

\noi for all $(x, y) \in \rnn \times \rnn$. Also, we have 
\begin{align*}
    \grad w_k = \frac{p \psi_k^{p-1} \grad \psi_k}{\left( \phi_2 + \frac{1}{k} \right)^{p-1}} - \frac{\psi_k^p (p-1) \grad \phi_2}{\left( \phi_2 + \frac{1}{k} \right)^p}.
\end{align*}
This implies 
\begin{align*}
    |\grad w_k|^p \leq 2^{p-1} \left( p^p k^{p(p-1)} \lv \psi_k \rv_{\infty}^{p(p-1)} |\grad \psi_k|^p + (p-1)^p k^{p} \lv \psi_k\rv_{\infty}^{p^2} | \grad \phi_2 |^p \right).
\end{align*}

\noi Hence, $w_k \in X(\om)$ for all $k \in \mathbb{N}$, as $\psi_k, \phi_2 \in X(\om)$. Now, testing equation (\ref{spectrum_equation}) satisfied by $\phi_2$ with $w_k$, we obtain
\begin{align*}
    \int_{\om} |\grad \phi_2|^{p-2} \grad \phi_2 \grad w_k dx + \int_{\rnn} \int_{\rnn}  \frac{F(\phi_2(x) - \phi_2(y)) \left( \frac{\psi_k^p(x)}{\left( \phi_2 + \frac{1}{k} \right)^{p - 1}(x)} - \frac{\psi_k^p(y)}{\left( \phi_2 + \frac{1}{k} \right)^{p - 1}(y)} \right)}{|x - y|^{N + p s}} dy ~ dx &\\
    = \int_{\Omega} \left( \lambda_1 + \frac{\mu}{|x|^{p \theta}} \right) \frac{\phi_2^{p-1} \psi_k^p}{\left( \phi_2 + \frac{1}{k} \right)^{p - 1}} dx.& 
\end{align*}
Using above and equation (\ref{simple_eqn1}), we deduce 
\begin{align*}
    0 = & \lim_{k \to \infty} \left( \int_{\om} L_c(\psi_k, \phi_2) + \int_{\rnn} \int_{\rnn} L(\psi_k, \phi_2)\right) ~dx dy \\
    \geq & \left( \int_{\om} L_c(\phi_1, \phi_2) + \int_{\rnn} \int_{\rnn} L(\phi_1, \phi_2)\right) ~dx dy \geq 0.
\end{align*}
So, the continuous and discrete version of Picone identity in Proposition \ref{continuous_picone_identity} and \ref{discrete_picone_identity} implies that $\phi_1 = c \phi_2$ a.e. for some constant $c$.  Therefore, $\lambda_1$ is simple.
\end{proof}
\noi Now we show that in contrast to the first eigenfunction being of constant sign, eigenfunctions corresponding to other eigenvalues change sign. 
\begin{theorem}\label{sign_changing_eigenfunctions}
    If $w$ is an eigenfunction corresponding to eigenvalue $\lambda$ such that $\lambda \neq \lambda_1$. Then $w$ changes sign. 
\end{theorem}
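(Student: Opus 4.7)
I argue by contradiction: suppose $w$ does not change sign. Since $\mathcal{T}$ is odd in $u$, we may assume $w \geq 0$ a.e.\ in $\om$ (replacing $w$ by $-w$ if necessary). The same weak Harnack estimate from Theorem $8.4$ of \cite{garain2022regularity} that was invoked above to pass from $\phi_1 \geq 0$ to $\phi_1 > 0$ applies verbatim to $w$, yielding $w > 0$ in $\om$.

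The plan is to repeat the Picone-type computation from the proof of simplicity of $\la_1$, but with the pair $(\phi_1, \phi_2)$ replaced by $(\phi_1, w)$. Choose $\{\psi_k\} \subset C_c^\infty(\rnn)$ with $\psi_k \geq 0$, $\psi_k \equiv 0$ on $\rnn \setminus \om$, $\psi_k \to \phi_1$ in $X(\om)$ and pointwise a.e., and set $w_k := \psi_k^p/(w + 1/k)^{p-1}$. The same estimates used in the simplicity proof show $w_k \in X(\om)$. Using $w_k$ as a test function in the weak form of $\mathcal{T}(w) = \la w^{p-1}$, and invoking the continuous Picone identity (Proposition \ref{continuous_picone_identity}) for the pair $(\psi_k, w+1/k)$ together with the discrete Picone identity (Proposition \ref{discrete_picone_identity}) for the same pair, one obtains the pointwise estimates
\[
|\grad w|^{p-2} \grad w \cdot \grad w_k \leq |\grad \psi_k|^p, \qquad F(w(x)-w(y))(w_k(x)-w_k(y)) \leq |\psi_k(x) - \psi_k(y)|^p.
\]
Integrating and rearranging yields
\[
\int_\om |\grad \psi_k|^p + \rnnn \frac{|\psi_k(x) - \psi_k(y)|^p}{|x-y|^{N+ps}}\,dx\,dy \;\geq\; \int_\om \left( \la + \frac{\mu}{|x|^{p\theta}} \right) \frac{w^{p-1}\psi_k^p}{(w+1/k)^{p-1}}\,dx.
\]

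Passing $k \to \infty$, the left-hand side converges to $\int_\om |\grad \phi_1|^p + \rnnn \frac{|\phi_1(x)-\phi_1(y)|^p}{|x-y|^{N+ps}}$, which by testing $\mathcal{T}(\phi_1) = \la_1 \phi_1^{p-1}$ against $\phi_1$ equals $\mu \int_\om \phi_1^p/|x|^{p\theta} + \la_1$. On the right, strict positivity $w > 0$ a.e.\ gives $w^{p-1}/(w+1/k)^{p-1} \to 1$ a.e., while the mixed interpolated Hardy inequality \eqref{interpolated_hardy_with_theta} applied to $\psi_k \to \phi_1$ provides uniform integrability of $\psi_k^p/|x|^{p\theta}$; dominated convergence then yields the limit $\mu \int_\om \phi_1^p/|x|^{p\theta} + \la$. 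Hence $\la_1 \geq \la$, and combined with the variational lower bound $\la \geq \la_1$ from \eqref{first_eigenvalue} this forces $\la = \la_1$, contradicting the hypothesis $\la \neq \la_1$.

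The main technical obstacle is the justification of this limit on the right-hand side: the singular weight $|x|^{-p\theta}$ interacts with the quotient $w^{p-1}/(w+1/k)^{p-1}$, so one must combine pointwise convergence---which rests crucially on the strict positivity $w > 0$ established at the outset---with uniform integrability of $\psi_k^p/|x|^{p\theta}$ furnished by \eqref{interpolated_hardy_with_theta}. Once that limit passage is secured, the rest of the argument is a direct transcription of the Picone computation already carried out in the simplicity proof.
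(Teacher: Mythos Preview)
Your proof is correct and follows the same route as the paper: contradiction via the continuous and discrete Picone identities with the test function $w_k = \psi_k^p/(w+1/k)^{p-1}$, yielding $\lambda_1 \geq \lambda$. Your treatment is in fact slightly more careful than the paper's own, since you explicitly invoke the weak Harnack inequality to secure $w>0$ before the limit passage---a point the paper uses implicitly (the limit $\big(w/(w+\tfrac{1}{k})\big)^{p-1}\to 1$ a.e.\ requires it) but does not state.
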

\begin{proof}
Let $\phi_1$ be the eigenfunction corresponding to the first eigenvalue $\lambda_1$. Then $\phi_1$ and $w$ satisfy
    \begin{equation}\label{sign_eqn1}
        -\plap \phi_1 + \pfrac \phi_1 - \mu \frac{|\phi_1|^{p-2} \phi_1}{|x|^{p \theta}} = \lambda_1  \phi_1^{p-1} \quad \text{in} \, \mathcal{D}'(\Omega) 
    \end{equation}
\begin{equation}\label{sign_eqn2}
    -\plap w + \pfrac w - \mu \frac{|w|^{p-2} w}{|x|^{p \theta}} = \lambda  |w|^{p-2} w \quad \text{in} \, \mathcal{D}'(\Omega)
\end{equation}
respectively. Suppose $w$ does not change the sign. Then, we may assume $w \geq 0$. Let $\{\psi_k\}_{k \geq 2}$ be a sequence in $C^\infty_c (\rnn)$ such that $\psi_k = 0$ in $\mathbb{R}^n \setminus \Omega$, $\psi_k \to \phi_1$ in $X(\om)$ as $k \to \infty$. Now we consider the test functions $w_1 = \phi_1$, $w_k = \frac{\psi_k^p}{\left( w + \frac{1}{k} \right)^{p-1}}$. Then $w_1, w_k \in X(\om)$. Taking $w_1$ and $w_k$ as test functions in \eqref{sign_eqn1} and \eqref{sign_eqn2} respectively, we obtain
\begin{equation}\label{sign_eqn3}
     \int_{\om} |\grad \phi_1|^p +  \int_{\rnn} \int_{\rnn} \frac{|\phi_1(x) - \phi_1(y)|^p}{|x - y|^{N + ps}} dx dy 
    - \mu \int_\Omega \frac{|\phi_1|^p}{|x|^{p \theta}} dx 
    = \lambda_1 \int_\Omega  |\phi_1|^p dx 
    \end{equation}
   \small{ \begin{align*}
    \int_{\om} |\grad w|^{p-2} \grad w \grad \left( \frac{\psi_k^p}{\left( w + \frac{1}{k} \right)^{p-1}} \right) + \int_{\rnn} \int_{\rnn} \frac{F(w(x) - w(y))}{|x - y|^{N + ps}} \left( \frac{\psi_k^p(x)}{\left( w + \frac{1}{k} \right)^{p-1}(x)} - \frac{\psi_k^p(y)}{\left( w + \frac{1}{k} \right)^{p-1}(y)} \right)  \\
    - \mu \int_\Omega \frac{|w|^{p-2} w}{|x|^{p \theta}} \frac{\psi_k^p}{\left( w + \frac{1}{k} \right)^{p-1}} dx 
    = \lambda \int_\Omega   |w|^{p-2} w \frac{\psi_k^p}{\left( w + \frac{1}{k} \right)^{p-1}} dx.
\end{align*}}

\noi Since $L(\psi_k, w + \frac{1}{k}) \geq 0$ and $L_c(\psi_k, w + \frac{1}{k}) \geq 0$, we have
\begin{equation}\label{sign_eqn4}
\int_{\om} |\grad \psi_k|^p + \int_{\rnn} \int_{\rnn} \frac{|\psi_k(x) - \psi_k(y)|^p}{|x - y|^{N + ps}} dx dy 
- \int_\Omega \left( \lambda   + \frac{\mu}{|x|^{p \theta}} \right) \psi_k^p \left( \frac{w}{w + \frac{1}{k}} \right)^{p-1} dx \geq 0. 
\end{equation}

\noi Subtracting \eqref{sign_eqn3} from \eqref{sign_eqn4} and taking the limit as $k \to \infty$, we obtain

\[
(\lambda - \lambda_1) \int_\Omega   |\phi_1|^p \leq 0,
\]
which gives a contradiction to the fact that $\lambda > \lambda_1$. Hence, $w$ changes sign. 
\end{proof}
\begin{theorem}
    Let $1 < p < N$. Then $\lambda_1$ is isolated in the spectrum of $\mathcal{T}$.
\end{theorem}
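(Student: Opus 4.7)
The plan is to argue by contradiction. Suppose there is a sequence of eigenvalues $\{\lambda_k\} \subset \sigma(\mathcal{T})\setminus\{\lambda_1\}$ with $\lambda_k \to \lambda_1$; since $\lambda_1$ is the infimum, $\lambda_k > \lambda_1$. Let $u_k$ be a corresponding eigenfunction normalized so that $\|u_k\|_{L^p(\om)} = 1$. By Theorem \ref{sign_changing_eigenfunctions}, each $u_k$ changes sign. Testing the eigenvalue equation with $u_k$ gives $\mathcal{J}(u_k) = \lambda_k$, and together with the mixed interpolated Hardy inequality \eqref{interpolated_hardy} this bounds $\{u_k\}$ in $X(\om)$. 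Extracting a subsequence, $u_k \weak u$ in $X(\om)$, $u_k \to u$ in $L^p(\om)$ and pointwise a.e., so $\|u\|_p = 1$.

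Next I apply the Brezis--Lieb lemma to the local gradient term, the Gagliardo seminorm, and the singular Hardy term (the last viewed as an $L^p$-norm in the measure $|x|^{-p\theta}dx$, with the uniform weighted bound coming from \eqref{interpolated_hardy}) to obtain $\mathcal{J}(u_k) = \mathcal{J}(u_k - u) + \mathcal{J}(u) + o(1)$. The mixed interpolated Hardy inequality yields $\mathcal{J}(u_k - u) \geq 0$, so $\mathcal{J}(u) \leq \lim \mathcal{J}(u_k) = \lambda_1$. Since $\|u\|_p = 1$, the characterization \eqref{first_eigenvalue} gives $\mathcal{J}(u) \geq \lambda_1$. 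Thus $u$ attains the infimum, and by the simplicity of $\lambda_1$ just established, $u = \pm\phi_1$; without loss of generality $u = \phi_1 > 0$.

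The final contradiction is extracted from the nodal set $\om_k^- := \{x \in \om : u_k(x) < 0\}$. On the one hand, $u_k \to \phi_1 > 0$ a.e.\ implies $\chi_{\om_k^-} \to 0$ a.e., so dominated convergence gives $|\om_k^-| \to 0$. On the other hand, testing the equation for $u_k$ with $-u_k^- \in X(\om)$, invoking \eqref{brasco_inequality_negative} to control the nonlocal contribution from below by $[u_k^-]_{s,p}^p$, and applying the mixed interpolated Hardy inequality to the remaining terms on the left, yields
$$\alpha \int_{\om} |\grad u_k^-|^p \, dx \leq \lambda_k \int_{\om} (u_k^-)^p \, dx,$$
where $\alpha := \min\bigl\{1 - \mu\xi/C_H,\, 1 - \mu(1-\xi)/C_{N,p,s}\bigr\} > 0$. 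H\"older together with the Sobolev embedding $W_0^{1,p}(\om) \hookrightarrow L^{p^*}(\om)$ gives
$$\int_{\om} (u_k^-)^p \, dx \leq |\om_k^-|^{p/N}\|u_k^-\|_{p^*}^p \leq C\,|\om_k^-|^{p/N}\int_{\om} |\grad u_k^-|^p\,dx.$$
Since $u_k^- \not\equiv 0$, dividing through gives $|\om_k^-|^{p/N} \geq \alpha/(C\lambda_k) \geq c > 0$ uniformly in $k$, contradicting $|\om_k^-| \to 0$.

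The most delicate step I expect is the Brezis--Lieb splitting of the weighted Hardy integral, which requires justifying the classical Brezis--Lieb identity in the measure $|x|^{-p\theta}dx$ from the a.e.\ convergence $u_k \to u$ and the uniform weighted $L^p$ bound provided by \eqref{interpolated_hardy}. Once that splitting is in place, the rest is routine bookkeeping of the three operators against the test function $-u_k^-$ and an application of the classical Sobolev--H\"older chain to turn energy control into a measure lower bound on the nodal set.
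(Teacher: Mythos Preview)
Your overall architecture---contradiction, boundedness via \eqref{interpolated_hardy}, weak limit $u$ with $\|u\|_p=1$, identification $u=\pm\phi_1$, and the nodal-set measure lower bound from testing with $u_k^-$---matches the paper exactly, and the final H\"older--Sobolev chain is identical to what the paper does. The gap is in the step you treat as routine: the Brezis--Lieb splitting of the \emph{local gradient} term.

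Brezis--Lieb for $\int_{\om}|\nabla u_k|^p$ requires $\nabla u_k\to\nabla u$ almost everywhere, and this does \emph{not} follow from $u_k\weak u$ in $X(\om)$ together with $u_k\to u$ in $L^p$ and a.e. You only obtain a.e.\ convergence of $u_k$, which suffices for the Gagliardo and Hardy Brezis--Lieb identities (and indeed the Hardy one you flag as ``most delicate'' is the easy one: a.e.\ convergence of $u_k$ plus the uniform weighted bound from \eqref{interpolated_hardy} is exactly the hypothesis of the classical lemma in the measure $|x|^{-p\theta}dx$). For the gradient you must invoke Lemma~\ref{pointwise_gradient}: since each $u_k$ is an exact critical point of $\tilde{\mathcal{J}}=\tilde{\mathcal{J}}_0$ on $S$ with $\mathcal{J}(u_k)=\lambda_k\to\lambda_1$, that lemma yields $\nabla u_k\to\nabla u$ a.e.\ along a subsequence, and your splitting $\mathcal{J}(u_k)=\mathcal{J}(u_k-u)+\mathcal{J}(u)+o(1)$ then goes through. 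Note that one cannot sidestep this by proving strong convergence of the Hardy term directly: when $\theta=1$ the embedding $W_0^{1,p}(\om)\hookrightarrow L^p(\om,|x|^{-p}dx)$ is non-compact, so $\int_{\om}|u_k-u|^p/|x|^{p}\,dx\to 0$ is not available for free.

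Once Lemma~\ref{pointwise_gradient} is cited, your route (Brezis--Lieb $\Rightarrow$ $u$ is a minimizer $\Rightarrow$ $u=\pm\phi_1$ by simplicity) and the paper's route (pointwise gradient convergence $\Rightarrow$ pass to the limit in the weak formulation $\Rightarrow$ $u$ is a first eigenfunction) are essentially equivalent, both resting on the same key lemma. Everything from ``$u=\pm\phi_1$'' onward in your proposal is correct and coincides with the paper's argument.
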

\begin{proof}
On contrary, assume that $\lambda_1$ is not isolated in the spectrum. Then there exists a sequence of eigenfunctions $\{\psi_k\}_{k\geq 1}$ in $X(\om)$ corresponding to a sequence of eigenvalues $\{ \mu_k \}_{k\geq 1}$ satisfying:
\begin{equation}\label{isolated_eq1_ev}
    - \plap \psi_k +  \pfrac \psi_k - \mu \frac{|\psi_k|^{p-2} \psi_k}{|x|^{p\theta}} = \mu_k  |\psi_k|^{p-2} \psi_k \quad \text{in} \ \Omega, \quad \psi_k = 0 \ \text{in} \ \mathbb{R}^n \setminus \Omega,
\end{equation}
with $\|\psi_k\|_p = 1$ and $\mu_k \to \la_1$ as $k \to \infty$. Moreover, we have
\[
\int_{\om}|\grad \psi_k|^p + \int_{\rnn} \int_{\rnn} \frac{|\psi_k(x) - \psi_k(y)|^p}{|x - y|^{N+ps}} \, dx \, dy - \mu \int_\Omega \frac{|\psi_k|^p}{|x|^{p\theta}} \, dx = \mu_k \int_\Omega  |\psi_k|^p \, dx = \mu_k.
\]
The mixed interpolated Hardy inequality \eqref{interpolated_hardy} yields that $\{ \psi_k \}$ is a bounded sequence in $X(\om)$. Thus, up to a subsequence, we have $\psi_k \weak \phi$ weakly in $X(\om)$ and $\psi_k \to \phi$ strongly in $L^p(\om)$ as $k \to \infty$. Then, Lemma \ref{pointwise_gradient} gives pointwise convergence of the gradients. Hence, 
\[
- \plap \phi + \pfrac \phi - \mu \frac{|\phi|^{p-2} \phi}{|x|^{p\theta}} = \lambda_1  |\phi|^{p-2} \phi \quad \text{in} \ \Omega, \quad \phi = 0 \ \text{in} \ \mathbb{R}^n \setminus \Omega.
\]
Therefore, \( \phi = \pm \phi_1 \). Since \( \psi_k \) changes sign by Theorem \ref{sign_changing_eigenfunctions}, without loss of generality, we can assume that \( u = +\phi_1 \), then
\begin{equation}\label{isolated_goes_to_0}
|\{x : \psi_k(x) < 0\}| \to 0 \mbox{ as } k \to \infty.
\end{equation}
Putting \( \psi_k^- \) as a test function in \eqref{isolated_eq1_ev}, we get
\begin{align*}
-\int_{\om} | \grad \psi_k^- |^{p}  + \rnnn \frac{F(\psi_k(x) - \psi_k(y))(\psi_k^-(x) - \psi_k^-(y))}{|x - y|^{N+ps}}  
+ \mu \int_\Omega \frac{(\psi_k^-)^{p}}{|x|^{p\theta}} \, 
= \mu_k \int_\Omega  (\psi_k^-)^p \, .
\end{align*}
Using \eqref{brasco_inequality_negative}, we obtain
\begin{align*}
\int_{\om} | \grad \psi_k^- |^{p} +& \rnnn  \frac{|\psi_k^-(x) - \psi_k^-(y)|^p}{|x - y|^{N+p s}} dx \, dy - \mu \int_\Omega \frac{|\psi_k^-|^p}{|x|^{p\theta}} dx \\ 
\leq &\int_{\om} | \grad \psi_k^- |^{p} - \rnnn \frac{F(\psi_k(x) - \psi_k(y))(\psi_k^-(x) - \psi_k^-(y))}{|x - y|^{N+p s}} dx \, dy - \mu \int_\Omega \frac{|\psi_k^-|^p}{|x|^{p\theta}} dx \\ 
= &\mu_k \int_\Omega |\psi_k^-|^p dx.
\end{align*}
Thus, mixed interpolated Hardy inequality \eqref{interpolated_hardy}, H\"{o}lder inequality, and Sobolev inequality yield
\begin{align*}
    \left( 1 - \frac{\mu \xi}{C_H}\right) \int_{\om}| \grad \psi_k^- |^{p} &
    \leq \left( 1 - \frac{\mu \xi}{C_H}\right) \int_{\om}| \grad \psi_k^- |^{p} + \left( 1 - \frac{\mu (1 - \xi)}{C_{N,p,s}}\right) \rnnn \frac{|\psi_k^-(x) - \psi_k^-(y)|^p}{|x - y|^{N+p s}}    \\
    &\leq \int_{\om} | \grad \psi_k^- |^{p} + \rnnn \frac{|\psi_k^-(x) - \psi_k^-(y)|^p}{|x - y|^{N+p s}} dx \, dy - \mu \int_\Omega \frac{|\psi_k^-|^p}{|x|^{p\theta}} dx \\
    & \leq \mu_k \left( \int_{\om} | \psi_k^-|^{p^*} \right)^{p/p^*} | \{ x : \psi_k(x) < 0 \}|^{1 - p/p^*}\\
    &\leq \mu_k C  \left(\int_{\om}| \grad \psi_k^- |^{p} \right) | \{ x : \psi_k(x) < 0 \}|^{1 - p/p^*},
\end{align*}
where $p^* = pN/(N -p)$. It implies that 
$$|\{ x : \psi_k(x) < 0 \}|^{1 - p/p^*} \geq \left( 1 - \frac{\mu \xi}{C_H}\right) \mu_k^{-1} C^{-1}, $$
which leads to a contradiction with \eqref{isolated_goes_to_0}. Hence, \( \lambda_1 \) is isolated in the spectrum of \( \mathcal{T} \).
\end{proof}

\section{\fucik Spectrum}
In this section, we study the \fucik spectrum, denoted by $\displaystyle \sum_{p,\mu,\theta}$, of \eqref{pq}. It is defined as 
\begin{equation*}
    \sum_{p,\mu,\theta} := \{ (\alpha, \beta) \in \mathbb{R}^2: \eqref{pq} \mbox{ has a nontrivial solution}   \}. 
\end{equation*}
Next, we show that the points in $\sum_{p,\mu,\theta}$ are associated with the critical value of some restricted functional. For nonnegative real $d$, recalling the functional $\mathcal{J}_d: X(\om) \to \mathbb{R}$ given by
\begin{align*}
\mathcal{J}_d(u) =  \int_{\om} |\grad u|^p \, dx + \rnnn \frac{|u(x)-u(y)|^{p}}{|x-y|^{N+ps}} \, dx \, dy - \mu \int_{\Omega} \frac{|u|^p}{|x|^{p\theta}} \, dx - d \int_{\Omega} (u^+)^p \, dx.
\end{align*}
Also, $\mathcal{J}_d \in C^1(X(\om), \mathbb{R})$ and for any $ v \in X(\om)$
\begin{align*}
\frac{1}{p}\langle \mathcal{J}_d'(u), v \rangle =& \int_{\om} |\grad u|^{p-2} \grad u \grad v + \rnnn \frac{|u(x)-u(y)|^{p-2}(u(x)-u(y))(v(x)-v(y))}{|x-y|^{N+p s}} \, dx \, dy \\
 & - \mu \int_{\Omega} \frac{|u|^{p-2} u v}{|x|^{p\theta}} \, dx - d \int_{\Omega} (u^+)^{p-1} v \, dx.
\end{align*}
Moreover, $\tilde{\mathcal{J}}_d := \mathcal{J}_d|_S \in C^1(X(\om), \mathbb{R})$, where $S$ is defined as
\[
S := \left\{ u \in X(\om) : I(u) := \int_{\Omega} |u|^p = 1 \right\}.
\]
\begin{lemma}
For $d \geq 0$, $(d + t, t) \in \mathbb{R}^2$ belongs to the spectrum $\sum_{p,\mu,\theta}$ if and only if there exists a critical point $u \in S$ of $\tilde{\mathcal{J}}_d$ such that $t = \tilde{\mathcal{J}}_d(u)$, a critical value.
\end{lemma}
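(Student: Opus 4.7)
The lemma is a Lagrange-multiplier style characterization, so the proof reduces to two bookkeeping computations, one for each direction, based on the $(p-1)$-homogeneity of $\mathcal{T}$ and on the pointwise identities $(u^+)^{p-1}-(u^-)^{p-1}=|u|^{p-2}u$ and $(u^+)^{p-1}u=(u^+)^p$. The key structural fact driving everything is that $\frac{1}{p}\langle\mathcal{J}_d'(u),u\rangle=\mathcal{J}_d(u)$, which follows by direct substitution into the definitions of $\mathcal{J}_d$ and its derivative.

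\textbf{Forward direction.} Assume $(d+t,t)\in\sum_{p,\mu,\theta}$, so that there exists a nontrivial $v\in X(\om)$ with $\mathcal{T}(v)=(d+t)(v^+)^{p-1}-t(v^-)^{p-1}$. Since every term is $(p-1)$-homogeneous, the normalization $u:=v/\lv v\rv_p$ still solves the same equation and lies in $S$. Testing that equation with an arbitrary $\varphi\in X(\om)$ and using the definition of $\mathcal{J}_d'$ yields
\[
\tfrac{1}{p}\langle \mathcal{J}_d'(u),\varphi\rangle=(d+t)\!\int_{\om}(u^+)^{p-1}\varphi-t\!\int_{\om}(u^-)^{p-1}\varphi-d\!\int_{\om}(u^+)^{p-1}\varphi=t\!\int_{\om}|u|^{p-2}u\,\varphi,
\]
where I used $(u^+)^{p-1}-(u^-)^{p-1}=|u|^{p-2}u$. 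Since $\tfrac{1}{p}I'(u)\varphi=\int_{\om}|u|^{p-2}u\,\varphi$, this is exactly $\mathcal{J}_d'(u)=tI'(u)$, so $u$ is a critical point of $\tilde{\mathcal{J}}_d$ on $S$. Taking $\varphi=u$ in the same identity and combining with $\tfrac{1}{p}\langle\mathcal{J}_d'(u),u\rangle=\mathcal{J}_d(u)$ and $I(u)=1$ gives $\tilde{\mathcal{J}}_d(u)=t$.

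\textbf{Reverse direction.} Suppose $u\in S$ is a critical point of $\tilde{\mathcal{J}}_d$ with $\tilde{\mathcal{J}}_d(u)=t$. By the Lagrange multiplier rule on the $C^{1,1}$-manifold $S$, there exists $\tau\in\mathbb{R}$ with $\mathcal{J}_d'(u)=\tau I'(u)$. Testing this identity against $u$ itself, using $\tfrac{1}{p}\langle\mathcal{J}_d'(u),u\rangle=\mathcal{J}_d(u)=t$ and $\tfrac{1}{p}I'(u)u=I(u)=1$, forces $\tau=t$. Rewriting the identity $\mathcal{J}_d'(u)=tI'(u)$ for a general test function $\varphi$ and regrouping via $t|u|^{p-2}u+d(u^+)^{p-1}=(d+t)(u^+)^{p-1}-t(u^-)^{p-1}$ yields exactly the weak formulation of \eqref{pq} with $(\alpha,\beta)=(d+t,t)$, so $(d+t,t)\in\sum_{p,\mu,\theta}$.

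\textbf{Main difficulty.} There is no analytic obstacle here (no compactness, convergence, or regularity argument is invoked): the statement is a formal equivalence between the Euler--Lagrange equation of $\tilde{\mathcal{J}}_d$ on $S$ and the Fu\v{c}\'{\i}k system. The only mild technical points are (i) checking that $I\in C^{1,1}$ so that the Lagrange multiplier principle applies (which is standard since $p>1$), and (ii) keeping track of the decomposition $|u|^{p-2}u=(u^+)^{p-1}-(u^-)^{p-1}$ used to convert between the unrestricted variational formulation and the signed Fu\v{c}\'{\i}k right-hand side; both signs must be handled carefully to get $\alpha=d+t$ and $\beta=t$ rather than the opposite.
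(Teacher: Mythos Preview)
Your proof is correct and follows essentially the same approach as the paper: both rely on the Lagrange multiplier rule on $S$ together with the homogeneity identity $\tfrac{1}{p}\langle\mathcal{J}_d'(u),u\rangle=\mathcal{J}_d(u)$ to identify the multiplier with the critical value $t$. Your write-up is simply more explicit in treating the two directions separately and in spelling out the normalization step for the forward direction.
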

\begin{proof}
From Lagrange theorem, we deduce that $u \in S$ is a critical point of $\tilde{\mathcal{J}}_d$ if and only if there exists $t \in \mathbb{R}$ such that
\begin{equation}\label{lagrange_eq}
\langle \mathcal{J}_d'(u), v \rangle  = tp \int_{\Omega} |u|^{p-2} u v,
\end{equation}
for all $v \in X(\om)$. Hence, $u \in S$ is a nontrivial weak solution to the problem
\[
-\plap u + \pfrac u - \mu \frac{|u|^{p-2} u}{|x|^{p \theta}} = (d + t)(u^+)^{p-1} + t(u^-)^{p-1} \text{ in } D'(\Omega), \quad u = 0 \text{ on } \mathbb{R}^n \setminus \Omega,
\]
which exactly means $(d + t, t) \in \displaystyle\sum_{p,\mu,\theta}$. Putting $v = u$ in \eqref{lagrange_eq}, we deduce $t = \tilde{\mathcal{J}}_d(u)$.
\end{proof}

\noi Next, we show that the vertical and horizontal lines that passes through $(\la_1,\la_1)$ lies in the \fucik spectrum. For this, we have the following propositions.
\begin{proposition}
For $d \geq 0$, the first eigenfunction $\phi_1$ is a global minimum for $\mathcal{J}_d$ with $\mathcal{J}_d(\phi_1) = \lambda_1 - d$. The corresponding point in $\Sigma_{p,\mu,\theta}$ is $(\lambda_1, \lambda_1 - d)$, which lies on the vertical line through $(\lambda_1, \lambda_1)$.
\end{proposition}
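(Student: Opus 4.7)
The plan is to check the value $\mathcal{J}_d(\phi_1)$ by direct computation, then establish the global minimality via the variational characterization of $\lambda_1$, and finally appeal to the preceding lemma to read off the corresponding point in $\Sigma_{p,\mu,\theta}$.

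First I would observe that since $\phi_1 > 0$ in $\Omega$ and $\int_\Omega |\phi_1|^p = 1$, we have $\phi_1^+ = \phi_1$, hence $\int_\Omega (\phi_1^+)^p = 1$. Plugging into the definition of $\mathcal{J}_d$ and using \eqref{first_eigenvalue}, I get
\[
\mathcal{J}_d(\phi_1) = \mathcal{J}(\phi_1) - d \int_\Omega (\phi_1^+)^p = \lambda_1 - d.
\]

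Next, for any $u \in S$, I would write $\mathcal{J}_d(u) = \mathcal{J}(u) - d\int_\Omega (u^+)^p$. The variational characterization \eqref{first_eigenvalue} of $\lambda_1$ gives $\mathcal{J}(u) \geq \lambda_1$. On the other hand, since $d \geq 0$ and $(u^+)^p \leq |u|^p$ pointwise, I have
\[
-d\int_\Omega (u^+)^p \geq -d \int_\Omega |u|^p = -d.
\]
Combining the two estimates yields $\mathcal{J}_d(u) \geq \lambda_1 - d = \mathcal{J}_d(\phi_1)$ for every $u \in S$, so $\phi_1$ is a global minimizer of $\tilde{\mathcal{J}}_d$ on $S$.

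In particular, $\phi_1$ is a critical point of $\tilde{\mathcal{J}}_d$ with critical value $t := \lambda_1 - d$. By the preceding lemma characterizing points of $\Sigma_{p,\mu,\theta}$ as critical values of $\tilde{\mathcal{J}}_d$, the pair $(d+t,\,t) = (\lambda_1,\,\lambda_1 - d)$ belongs to the Fu\v c\'ik spectrum. Since the first coordinate is $\lambda_1$, independently of $d$, this point lies on the vertical line through $(\lambda_1,\lambda_1)$, which completes the proof. The argument is entirely routine given the earlier results, so I expect no serious obstacle; the only subtlety to verify is the pointwise bound $(u^+)^p \leq |u|^p$ which underlies the global minimality (and which also shows that replacing $u$ by $-\phi_1$ gives the strictly larger value $\lambda_1$, consistent with $\phi_1$ being the genuine minimizer).
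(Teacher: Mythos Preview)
Your proof is correct and follows essentially the same approach as the paper: both compute $\mathcal{J}_d(\phi_1)=\lambda_1-d$ directly and then bound $\mathcal{J}_d(u)\geq \lambda_1 - d$ on $S$ via the variational characterization of $\lambda_1$ together with $\int_\Omega(u^+)^p\leq \int_\Omega|u|^p=1$. Your explicit appeal to the preceding lemma to identify the point $(\lambda_1,\lambda_1-d)$ in $\Sigma_{p,\mu,\theta}$ is a nice touch that the paper leaves implicit.
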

\begin{proof}
Consider
\begin{align*}
\mathcal{J}_d(u) &= \int_{\om} |\grad u|^p + \rnnn \frac{|u(x) - u(y)|^p}{|x - y|^{N+ps}} \, dx \, dy - \mu \int_{\Omega} \frac{|u|^p}{|x|^{p \theta}} \, dx - d \int_{\Omega} (u^+)^p \, dx \\
&\geq \lambda_1 \int_{\Omega} |u|^p \, dx - d \int_{\Omega} (u^+)^p \, dx \geq \lambda_1 - d.
\end{align*}
Then, $\mathcal{J}_d$ is bounded below by $\lambda_1 - d$. Also,
\begin{align*}
\mathcal{J}_d(\phi_1) &= \lambda_1 - d\int_{\Omega} \phi_1^p \, dx = \lambda_1 - d.
\end{align*}
Therefore, $\phi_1$ is a global minimum of $\mathcal{J}_d$ with $\mathcal{J}_d(\phi_1) = \lambda_1 - d$.
\end{proof}

\noi Another set of points in the \fucik spectrum comes from a second critical point of $\mathcal{J}_d$ at $-\phi_1$.

\begin{proposition}
The negative eigenfunction $-\phi_1$ is a strict local minimum for $\mathcal{J}_d$ with $\mathcal{J}_d(-\phi_1) = \lambda_1$. The corresponding point in $\Sigma_{p,\mu, \theta}$ is $(\lambda_1 + d, \lambda_1)$.
\end{proposition}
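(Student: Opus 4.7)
The value computation is a direct check: since $\phi_1>0$ a.e., one has $(-\phi_1)^+\equiv 0$, so the penalty term in \eqref{energy_functional_d} vanishes and each remaining integrand is invariant under $u\mapsto -u$, giving $\mathcal{J}_d(-\phi_1)=\mathcal{J}(\phi_1)=\lambda_1$. Once strict local minimality of $-\phi_1$ on $S$ is established, $-\phi_1$ is in particular a critical point of $\tilde{\mathcal{J}}_d$, and the preceding lemma applied with critical value $t=\lambda_1$ places $(d+t,t)=(d+\lambda_1,\lambda_1)$ in $\Sigma_{p,\mu,\theta}$.

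For the strict local minimum, the plan is to argue by contradiction: suppose there exists $\{u_k\}\subset S$ with $u_k\neq -\phi_1$, $u_k\to -\phi_1$ in $X(\Omega)$, and $\mathcal{J}_d(u_k)\leq \lambda_1$. The Sobolev embedding and passage to a subsequence give $u_k\to -\phi_1$ pointwise a.e., and since $\phi_1>0$ a.e., dominated convergence applied to $\chi_{\Omega_k^+}$ yields $|\Omega_k^+|\to 0$, where $\Omega_k^+:=\{u_k>0\}$. Writing $u_k=u_k^+-u_k^-$, the local and Hardy integrals in $\mathcal{J}$ split exactly, while for the Gagliardo seminorm, summing \eqref{brasco_inequality_positive} and \eqref{brasco_inequality_negative} pointwise and integrating gives $[u_k]_{s,p}^p\geq [u_k^+]_{s,p}^p+[u_k^-]_{s,p}^p$. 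Hence $\mathcal{J}(u_k)\geq \mathcal{J}(u_k^+)+\mathcal{J}(u_k^-)$, and the variational characterization \eqref{first_eigenvalue}, combined with $p$-homogeneity, already gives $\mathcal{J}(u_k^-)\geq \lambda_1\|u_k^-\|_p^p$.

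The main obstacle, and the reason that simplicity of $\lambda_1$ alone does not suffice (it only gives $\mathcal{J}(u_k)>\lambda_1$, which can be wiped out by the penalty $-d\|u_k^+\|_p^p$), is controlling $\mathcal{J}(u_k^+)$ in the positive regime. The cure exploits the smallness of $\Omega_k^+$: the mixed interpolated Hardy inequality \eqref{interpolated_hardy} gives
\[
\mathcal{J}(u_k^+)\geq \Bigl(1-\tfrac{\mu\xi}{C_H}\Bigr)\|\nabla u_k^+\|_p^p,
\]
and chaining H\"older's inequality with the Sobolev embedding applied on $\Omega_k^+$ produces
\[
\|u_k^+\|_p^p\leq |\Omega_k^+|^{p/N}\|u_k^+\|_{p^*}^p\leq C_S\,|\Omega_k^+|^{p/N}\|\nabla u_k^+\|_p^p.
\]
Since $|\Omega_k^+|\to 0$, for $k$ large the composite multiplier of $\|u_k^+\|_p^p$ exceeds $d+\lambda_1$, so $\mathcal{J}(u_k^+)>(d+\lambda_1)\|u_k^+\|_p^p$ whenever $u_k^+\not\equiv 0$. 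Assembling these estimates,
\[
\mathcal{J}_d(u_k)\geq \mathcal{J}(u_k^+)+\mathcal{J}(u_k^-)-d\|u_k^+\|_p^p>\lambda_1\bigl(\|u_k^+\|_p^p+\|u_k^-\|_p^p\bigr)=\lambda_1,
\]
contradicting the hypothesis. If instead $u_k^+\equiv 0$ along a subsequence, then $u_k\leq 0$ with $u_k\neq -\phi_1$ and (for $k$ large) $u_k\neq \phi_1$, so simplicity of $\lambda_1$ forces $\mathcal{J}_d(u_k)=\mathcal{J}(u_k)>\lambda_1$, again the desired contradiction. This completes the plan.
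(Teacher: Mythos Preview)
Your argument is correct and follows essentially the same route as the paper: contradiction via a sequence $u_k\to -\phi_1$ in $S$ with $\mathcal{J}_d(u_k)\le\lambda_1$, splitting $\mathcal{J}(u_k)\ge\mathcal{J}(u_k^+)+\mathcal{J}(u_k^-)$, bounding the negative part by $\lambda_1\|u_k^-\|_p^p$, and using the mixed interpolated Hardy inequality on the positive part to obtain a blowing-up Rayleigh quotient $\|\nabla u_k^+\|_p^p/\|u_k^+\|_p^p$. The only cosmetic difference is that the paper outsources this last blow-up to Lemma~2.4 of \cite{Cuesta1999fucik}, whereas you supply it directly from $|\{u_k>0\}|\to 0$ together with H\"older and Sobolev.
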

\begin{proof}
On the contrary, if $-\phi_1$ is not the strict local minima, then there exists a sequence $u_k \in S$, $u_k \neq -\phi_1$ with $\mathcal{J}_d(u_k) \leq \lambda_1$, $u_k \to -\phi_1$. Since $u_k \neq -\phi_1$, then $| \{x \in \om : ~ u_k(x)  < 0 \}| > 0$ for sufficiently large $k$. If $u_k \leq 0$ for a.e $x \in \Omega$, then

\[
\mathcal{J}_d(u_k) =  \int_{\om} |\grad u_k|^p + \rnnn \frac{|u_k(x) - u_k(y)|^p}{|x - y|^{N+ps}} \, dx \, dy - \mu \int_{\Omega} \frac{|u_k|^p}{|x|^{p \theta}} \, dx > \lambda_1,
\]

\noi as $u_k \neq \pm \phi_1$. This gives a contradiction as $\mathcal{J}_d(u_k) \leq \lambda_1$. So $u_k$ changes sign for sufficiently large $k$.\\
Note that $(a+b)^p \geq a^p + b^p$ for $a,b \geq 0$ and $p > 1$, we have
\begin{align*}
\rnnn & \frac{|u_k(x) - u_k(y)|^p}{|x - y|^{N+p s}} \\
= &  \iint_{{\begin{array}{l} \scriptstyle{\{ u_k(x) > 0,}\\\scriptstyle{ u_k(y) > 0\}}\end{array}}}  \frac{|u_k^+(x) - u_k^+(y)|^p}{|x - y|^{N+p s}}  + \iint_{{\begin{array}{l}\scriptscriptstyle{\{ u_k(x) > 0,}\\\scriptscriptstyle{ u_k(y) < 0\}}\end{array}}}  \frac{|u_k^+(x) + u_k^-(y)|^p}{|x - y|^{N+p s}} \\
& + \iint_{{\begin{array}{l}\scriptscriptstyle{\{ u_k(x) < 0,}\\ \scriptscriptstyle{u_k(y) > 0\}}\end{array}}}  \frac{|u_k^-(x) + u_k^+(y)|^p}{|x - y|^{N+p s}} + \iint_{{\begin{array}{l}\scriptscriptstyle{\{ u_k(x) < 0,}\\\scriptscriptstyle{ u_k(y) < 0\}}\end{array}}}  \frac{|u_k^-(x) - u_k^-(y)|^p}{|x - y|^{N+p s}} \\
\geq & \iint_{{\begin{array}{l}\scriptscriptstyle{\{ u_k(x) > 0,}\\ \scriptscriptstyle{u_k(y) > 0\}}\end{array}}}  \frac{|u_k^+(x) - u_k^+(y)|^p}{|x - y|^{N+p s}}  + \iint_{{\begin{array}{l} \scriptscriptstyle{\{ u_k(x) > 0,}\\\scriptscriptstyle{ u_k(y) < 0\}}\end{array}}}  \frac{|u_k^+(x)|^p + |u_k^-(y)|^p}{|x - y|^{N+p s}} \\
& + \iint_{{\begin{array}{l}\scriptscriptstyle{\{ u_k(x) < 0,}\\\scriptscriptstyle{ u_k(y) > 0\}}\end{array}}}  \frac{|u_k^-(x)|^p + |u_k^+(y)|^p}{|x - y|^{N+p s}} + \iint_{{\begin{array}{l}\scriptscriptstyle{\{ u_k(x) < 0,}\\\scriptscriptstyle{ u_k(y) < 0\}}\end{array}}}  \frac{|u_k^-(x) - u_k^-(y)|^p}{|x - y|^{N+p s}} \\
= & \rnnn \frac{|u_k^+(x) - u_k^+(y)|^p}{|x - y|^{N+p s}} + \rnnn \frac{|u_k^-(x) - u_k^-(y)|^p}{|x - y|^{N+p s}}.
\end{align*}
Now, using the above, mixed interpolated Hardy inequality \eqref{interpolated_hardy} and definition of $\la_1$, we have
\begin{align*}
\mathcal{J}_d(u_k) \geq & \int_{\om} |\grad u_k^+ |^p + \int_{\om} |\grad u_k^- |^p  +  \rnnn \frac{|u_k^+(x) - u_k^+(y)|^p}{|x - y|^{N+p s}} \\
&  + \rnnn \frac{|u_k^-(x) - u_k^-(y)|^p}{|x - y|^{N+p s}}- \mu \int_{\Omega} \frac{|u_k^+|^p}{|x|^{p \theta}} \, dx - \mu \int_{\Omega} \frac{|u_k^-|^p}{|x|^{p \theta}} \, dx - d \int_{\Omega} (u_k^+)^p \, dx\\
\geq & \left( 1 - \frac{\mu \xi}{C_H}\right)\int_{\om} |\grad u_k^+ |^p + \la_1 \int_{\om} (u_k^-)^p -  d\int_{\Omega} (u_k^+)^p \, dx \\
= & \left[ r_k \left( 1 - \frac{\mu \xi}{C_H}\right) - d  \right]\int_{\Omega} (u_k^+)^p \, dx + \la_1 \int_{\om} (u_k^-)^p,
\end{align*}
with $r_k = \int_{\om} |\grad u_k^+ |^p/\int_{\Omega} (u_k^+)^p $.
As $u_k \in S$, we obtain
\[
\mathcal{J}_d(u_k) \leq \lambda_1 = \lambda_1 \int_{\Omega} (u_k^+)^p \, dx + \lambda_1 \int_{\Omega} (u_k^-)^p \, dx.
\]

\noi Combining both the inequalities, we have
$$r_k \left( 1 - \frac{\mu \xi}{C_H}\right) - d \leq \la_1 $$
which contradicts to the fact that $r_k \to +\infty$ from Lemma $2.4$ of \cite{Cuesta1999fucik}, as required.
\end{proof}
\noi Next, we obtain a third critical point of $\mathcal{J}_d$ in the following lemma with the help of Proposition \ref{mountain_pass_variant} and first two critical points $\phi_1$ and $- \phi_1$ of $\mathcal{J}_d$.

\begin{lemma}
Let $\epsilon_0 > 0$ be such that
\[
\tilde{\mathcal{J}}_d(u) > \tilde{\mathcal{J}}_d(-\phi_1) 
\]
for all $u \in B(-\phi_1, \epsilon_0) \cap S$ with $u \neq -\phi_1$, where the ball is taken in $X(\om)$. Then for any $0 < \epsilon < \epsilon_0$,
\[
\inf \{\tilde{\mathcal{J}}_d(u): u \in S \text{ and } \|u - (-\phi_1)\| = \epsilon\} > \tilde{\mathcal{J}}_d(-\phi_1).
\]
\end{lemma}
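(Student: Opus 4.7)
The plan is to argue by contradiction. Suppose there exists $\epsilon \in (0,\epsilon_0)$ such that
$$\eta := \inf\{\tilde{\mathcal{J}}_d(u) : u \in S,\ \lv u + \phi_1 \rv = \epsilon\} = m := \tilde{\mathcal{J}}_d(-\phi_1).$$
Pick a minimizing sequence $\{u_k\} \subset S$ with $\lv u_k + \phi_1 \rv = \epsilon$ and $\tilde{\mathcal{J}}_d(u_k) \to m$. By the strict local-minimality hypothesis, $-\phi_1$ is the global minimizer of $\tilde{\mathcal{J}}_d$ on the closed subset $M_\epsilon := \{u \in S : \lv u + \phi_1 \rv \leq \epsilon\}$ of $S$, with minimum value $m$. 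Applying Ekeland's variational principle to $\tilde{\mathcal{J}}_d$ on $M_\epsilon$ with parameter $\lambda_k := \sqrt{\epsilon_k}$, where $\epsilon_k := \tilde{\mathcal{J}}_d(u_k) - m \to 0^+$, produces $v_k \in M_\epsilon$ satisfying $\tilde{\mathcal{J}}_d(v_k) \leq \tilde{\mathcal{J}}_d(u_k)$, $\lv v_k - u_k \rv \leq \sqrt{\epsilon_k}$, and
$$\tilde{\mathcal{J}}_d(v_k) \leq \tilde{\mathcal{J}}_d(w) + \sqrt{\epsilon_k}\,\lv v_k - w \rv \quad \text{for all } w \in M_\epsilon.$$
In particular $\epsilon - \sqrt{\epsilon_k} \leq \lv v_k + \phi_1 \rv \leq \epsilon$, so $\lv v_k + \phi_1 \rv \to \epsilon$.

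I would next verify that $\{v_k\}$ is a Palais--Smale sequence for $\tilde{\mathcal{J}}_d$ on $S$ in the sense of \eqref{derivative_on_manifold}. For any tangent direction $\xi$ to $S$ at $v_k$, the curve $t \mapsto \gamma(t) := (v_k + t\xi)/\lv v_k + t\xi \rv_p$ is a $C^1$ path on $S$ with $\gamma(0) = v_k$ and $\gamma'(0) = \xi$. When $\lv v_k + \phi_1 \rv < \epsilon$, the curve $\gamma$ remains in $M_\epsilon$ for all sufficiently small $|t|$; testing Ekeland's inequality with $w = \gamma(t)$ and letting $t \to 0^\pm$ yields, for a suitable Lagrange multiplier $t_k \in \real$, the two-sided bound
$$\Bigl| \langle \mathcal{J}_d'(v_k),\xi \rangle - p\, t_k \int_\om |v_k|^{p-2} v_k \xi \Bigr| \leq C \sqrt{\epsilon_k}\, \lv \xi \rv.$$
If instead $\lv v_k + \phi_1 \rv = \epsilon$, I would replace $v_k$ by the inward perturbation $\tilde{v}_k := ((1-\delta_k)v_k - \delta_k \phi_1)/\lv (1-\delta_k)v_k - \delta_k \phi_1 \rv_p$ with $\delta_k > 0$ chosen small enough that $\lv \tilde{v}_k + \phi_1 \rv < \epsilon$ and $\lv \tilde{v}_k - v_k \rv \to 0$; convexity of the norm makes this legitimate, and the continuity of $\mathcal{J}_d'$ transfers the derivative bound from $\tilde{v}_k$ back to $v_k$.

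Once $\{v_k\}$ is confirmed to be a P.-S. sequence for $\tilde{\mathcal{J}}_d$, Lemma \ref{ps_condition} yields, up to a subsequence, $v_k \to v^*$ strongly in $X(\om)$. Passing to the limit, $v^* \in S$, $\tilde{\mathcal{J}}_d(v^*) = m$, and $\lv v^* + \phi_1 \rv = \lim_k \lv v_k + \phi_1 \rv = \epsilon$, so $v^* \neq -\phi_1$ while $v^* \in B(-\phi_1,\epsilon_0) \cap S$ (since $\epsilon < \epsilon_0$). This directly contradicts the hypothesis that $\tilde{\mathcal{J}}_d(u) > m$ for every $u \in B(-\phi_1,\epsilon_0) \cap S$ with $u \neq -\phi_1$, completing the argument. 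The main technical obstacle I anticipate is the Palais--Smale verification: Ekeland's inequality yields only a one-sided estimate when $v_k$ lies on the boundary $\lv v_k + \phi_1 \rv = \epsilon$, so the inward-perturbation parameter $\delta_k$ must be tuned so that the $L^p$-renormalization error is swallowed by $\sqrt{\epsilon_k}$, preserving both the asymptotic vanishing of the derivative and the limit $\lv v^* + \phi_1 \rv = \epsilon$.
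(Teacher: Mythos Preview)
Your overall strategy --- apply Ekeland's variational principle on a closed neighbourhood of $-\phi_1$ in $S$, extract an approximate critical sequence $\{v_k\}$, invoke Lemma~\ref{ps_condition} to pass to a strong limit $v^*$ with $\tilde{\mathcal J}_d(v^*)=\lambda_1$ and $\lv v^*+\phi_1\rv=\epsilon$, and contradict the strict local minimality --- is exactly the argument of Cuesta--de Figueiredo--Gossez, Lemma~2.9, which is all the paper invokes. So the route is the right one.

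The gap is in your treatment of the boundary case $\lv v_k+\phi_1\rv=\epsilon$. Ekeland's variational inequality
\[
\tilde{\mathcal J}_d(w)\ \ge\ \tilde{\mathcal J}_d(v_k)-\sqrt{\epsilon_k}\,\lv w-v_k\rv,\qquad w\in M_\epsilon,
\]
is a statement about $v_k$ being a minimizer of the perturbed functional on $M_\epsilon$; it says nothing about your inward perturbation $\tilde v_k$. Consequently there is no derivative bound available at $\tilde v_k$ to ``transfer back'' by continuity of $\mathcal J_d'$. Differentiating the Ekeland inequality along a curve through $\tilde v_k$ also fails, because at $t=0$ the two sides of the inequality are not equal (they differ by $\tilde{\mathcal J}_d(\tilde v_k)-\tilde{\mathcal J}_d(v_k)+\sqrt{\epsilon_k}\lv\tilde v_k-v_k\rv$), so no first-order information is extracted.

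The clean remedy, and what is done in \cite{Cuesta1999fucik}, is to enlarge the domain of Ekeland's principle: fix any $\epsilon'\in(\epsilon,\epsilon_0)$ and work on
\[
M_{\epsilon'}:=\{u\in S:\lv u+\phi_1\rv\le\epsilon'\}.
\]
The infimum of $\tilde{\mathcal J}_d$ on $M_{\epsilon'}$ is still $\lambda_1$ (attained only at $-\phi_1$, since $\epsilon'<\epsilon_0$), and your minimizing sequence $\{u_k\}$ on the sphere of radius $\epsilon$ lies in the \emph{interior} of $M_{\epsilon'}$. Ekeland then produces $v_k$ with $\lv v_k-u_k\rv\le\sqrt{\epsilon_k}$, hence $\lv v_k+\phi_1\rv\le\epsilon+\sqrt{\epsilon_k}<\epsilon'$ for $k$ large, so every $v_k$ is strictly interior. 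Now the test curves $\gamma(t)=(v_k+t\xi)/\lv v_k+t\xi\rv_p$ stay in $M_{\epsilon'}$ for $|t|$ small on both sides of $0$, the two-sided differentiation goes through, and $\{v_k\}$ is a genuine Palais--Smale sequence for $\tilde{\mathcal J}_d$ on $S$. The rest of your argument then concludes without change.
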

\begin{proof}
The proof of the following lemma follows from the Lemma $2.9$ of \cite{Cuesta1999fucik}.
\end{proof}



\noi Now applying the Mountain pass Lemma \ref{mountain_pass_variant} with the minimax level defined as 
\begin{equation}\label{minimax_level}
    c(d) := \inf_{\gamma \in \Gamma} \max_{u \in \gamma([-1, 1])} \mathcal{J}_d(u),
\end{equation}
with $\Gamma = \{\gamma \in C([-1, 1], S): \gamma(-1) = -\phi_1 \text{ and } \gamma(1) = \phi_1\}$, we have proved the following:
\begin{theorem}\label{curve}
For each $d \geq 0$, the point $(d + c(d), c(d))$, where $c(d) > \lambda_1$ is defined by the minimax formula \eqref{minimax_level}, then the point $(d + c(d), c(d))$ belongs to $\Sigma_{p,\mu, \theta}$.
\end{theorem}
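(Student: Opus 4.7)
The plan is to verify the hypotheses of the variant of the Mountain Pass Lemma (Lemma \ref{mountain_pass_variant}) with $X = X(\om)$, $g(u) = I(u) = \int_\om |u|^p$, $M = S$, $\mathcal{J} = \tilde{\mathcal{J}}_d$, $u_0 = -\phi_1$ and $u_1 = \phi_1$, and then translate the resulting critical point into a point of $\Sigma_{p,\mu,\theta}$ via the Lagrange multiplier computation already done earlier in the section.

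First I would check the geometric assumptions. Since $\phi_1 > 0$ a.e.\ and $\|\phi_1\|_p = 1$, we have $\|\phi_1 - (-\phi_1)\| = 2\|\phi_1\| > 0$, so for any sufficiently small $\epsilon \in (0,\epsilon_0)$ (where $\epsilon_0$ is the radius produced by the preceding lemma) the condition $\|u_1 - u_0\| > \epsilon$ holds. By that preceding lemma,
\[
m_\epsilon := \inf\bigl\{ \tilde{\mathcal{J}}_d(u) : u \in S,\ \|u-(-\phi_1)\| = \epsilon \bigr\} > \tilde{\mathcal{J}}_d(-\phi_1) = \lambda_1.
\]
On the other hand $\tilde{\mathcal{J}}_d(\phi_1) = \lambda_1 - d \leq \lambda_1 = \tilde{\mathcal{J}}_d(-\phi_1) < m_\epsilon$, so $m_\epsilon > \max\{\tilde{\mathcal{J}}_d(\phi_1), \tilde{\mathcal{J}}_d(-\phi_1)\}$, which is exactly the mountain–pass geometric hypothesis.

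Next, any continuous path $\gamma \in \Gamma$ joining $-\phi_1$ to $\phi_1$ inside $S$ must, by continuity of $u \mapsto \|u-(-\phi_1)\|$ and the intermediate value theorem, meet the sphere $\{\|u-(-\phi_1)\|=\epsilon\}$. Hence $\max_{u \in \gamma([-1,1])} \tilde{\mathcal{J}}_d(u) \geq m_\epsilon$, and taking the infimum over $\gamma$ gives $c(d) \geq m_\epsilon > \lambda_1$, which proves the assertion $c(d) > \lambda_1$. Lemma \ref{mountain_pass_variant} then produces a sequence $\{u_k\} \subset S$ with $\tilde{\mathcal{J}}_d(u_k) \to c(d)$ and $\|\bar{\mathcal{J}}_d'(u_k)\|_* \to 0$. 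By the Palais–Smale condition on $S$ (Lemma \ref{ps_condition}), after passing to a subsequence $u_k \to u$ strongly in $X(\om)$, whence $u \in S$ is a critical point of $\tilde{\mathcal{J}}_d$ with $\tilde{\mathcal{J}}_d(u) = c(d)$.

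Finally, by the Lagrange multiplier characterization proved at the start of this section, a critical point of $\tilde{\mathcal{J}}_d$ on $S$ at level $t = c(d)$ yields a nontrivial solution of \eqref{pq} with $(\alpha,\beta) = (d+t, t) = (d+c(d), c(d))$, i.e.\ $(d+c(d), c(d)) \in \Sigma_{p,\mu,\theta}$. The main obstacle I anticipate is the strict inequality $c(d) > \lambda_1$; every other ingredient (the mountain-pass deformation, the P.-S.\ condition, the Lagrange-multiplier identification) has been prepared in the preceding lemmas, so the whole theorem reduces to the crossing argument plus the Palais–Smale recovery described above.
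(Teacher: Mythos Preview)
Your proposal is correct and follows exactly the route the paper takes: the paper simply says ``applying the Mountain Pass Lemma \ref{mountain_pass_variant} with the minimax level \eqref{minimax_level}\ldots we have proved the following,'' relying on the preceding two propositions, the strict-local-minimum lemma, and Lemma~\ref{ps_condition}, together with the Lagrange-multiplier identification from the first lemma of Section~5. You have merely spelled out the geometric verification and the crossing argument yielding $c(d)>\lambda_1$ in more detail than the paper does.
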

\noi It is a trivial fact that $\Sigma_{p,\mu, \theta}$ is symmetric with respect to the diagonal. The whole curve that we obtain using Theorem \ref{curve} and symmetrizing is denoted by

\[
\mathcal{C} = \{(d + c(d), c(d)), (c(d), d + c(d)): d \geq 0\}.
\]
Next we show that the trivial lines $\mathbb{R} \times \{\la_1\}$ and $\{\la_1\} \times \mathbb{R}$ are isolated in $\Sigma_{p,\mu,\theta}$ and then we state some topological properties of the functional $\mathcal{J}_d.$
\begin{proposition}
Let $1 < p < N$. Then there does not exist any sequence $(a_k, b_k) \in \sum_{p,\mu,\theta}$ with $a_k > \lambda_1$ and $b_k > \lambda_1$ such that $(a_k, b_k) \to (a, b)$ with $a = \lambda_1$ or $b = \lambda_1$. Therefore, the lines $\mathbb{R} \times \{\lambda_1\}$ and $\{\lambda_1\} \times \mathbb{R}$ are isolated in $\sum_{p,\mu,\theta}$.
\end{proposition}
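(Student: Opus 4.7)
I will argue by contradiction. By the symmetry $(\al,\ba)\leftrightarrow(\ba,\al)$ of $\Sigma_{p,\mu,\theta}$ (obtained via $u\mapsto-u$, using the oddness of $\mathcal{T}$ together with $(-u)^\pm=u^\mp$), it suffices to treat the case $a_k\to\la_1$ and $b_k\to b\geq\la_1$. Let $u_k\in X(\om)$ be a nontrivial solution of \eqref{pq} with parameters $(a_k,b_k)$, normalized so that $\int_\om|u_k|^p=1$. Testing with $u_k$ and invoking the mixed interpolated Hardy inequality \eqref{interpolated_hardy_with_theta} exactly as in Lemma \ref{ps_condition} yields that $\{u_k\}$ is bounded in $X(\om)$. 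Extracting a subsequence, $u_k\weak u$ weakly in $X(\om)$, $u_k\to u$ in $L^p(\om)$ and pointwise a.e., and the argument of Lemma \ref{pointwise_gradient} adapts verbatim---the Fu\v c\'ik right-hand side $a_k(u_k^+)^{p-1}-b_k(u_k^-)^{p-1}$ plays exactly the role of the term $d(u^+)^{p-1}$ there---to give $\grad u_k\to\grad u$ pointwise a.e. Passing to the limit produces a weak solution $u\in X(\om)$ of $\mathcal{T}(u)=\la_1(u^+)^{p-1}-b(u^-)^{p-1}$ with $\int_\om|u|^p=1$.

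The crux is identifying $u=\pm\phi_1$. If $u^+\not\equiv0$, I test the limit equation with $u^+$; since $(u^-)^{p-1}u^+\equiv0$, \eqref{brasco_inequality_positive} and the variational characterization \eqref{first_eigenvalue} give the sandwich
\begin{align*}
\la_1\int_\om(u^+)^p &= \int_\om|\grad u^+|^p + \rnnn \frac{F(u(x)-u(y))(u^+(x)-u^+(y))}{|x-y|^{N+ps}}\,dx\,dy - \mu\int_\om\frac{(u^+)^p}{|x|^{p\theta}}\\
&\geq \int_\om|\grad u^+|^p + [u^+]_{s,p}^p - \mu\int_\om\frac{(u^+)^p}{|x|^{p\theta}}\geq \la_1\int_\om(u^+)^p,
\end{align*}
so $u^+/\lv u^+\rv_p$ realizes $\la_1$; simplicity of $\la_1$ then forces $u^+=c\phi_1$ for some $c>0$, and since $\phi_1>0$ in $\om$, this gives $u^-\equiv0$, hence $u=\phi_1$ after normalization. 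If instead $u^+\equiv0$, then $v:=-u\geq0$ satisfies $\mathcal{T}(v)=bv^{p-1}$ by the oddness of $\mathcal{T}$, so Theorem \ref{sign_changing_eigenfunctions} forces $b=\la_1$ and $v=\phi_1$, i.e., $u=-\phi_1$. In either case $u=\pm\phi_1$, and since $\phi_1>0$ a.e.\ in $\om$, Egorov's theorem combined with the a.e.\ convergence $u_k\to u$ implies that either $|\{u_k<0\}|\to 0$ or $|\{u_k>0\}|\to 0$ as $k\to\infty$.

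For the contradiction, I test the equation for $u_k$ against $u_k^+$ (and, symmetrically, $u_k^-$). Since $(u_k^-)^{p-1}u_k^+\equiv0$ and \eqref{brasco_inequality_positive} applies,
\[
\int_\om|\grad u_k^+|^p + [u_k^+]_{s,p}^p - \mu\int_\om\frac{(u_k^+)^p}{|x|^{p\theta}} \leq a_k\int_\om(u_k^+)^p.
\]
Applying \eqref{interpolated_hardy_with_theta} on the left with $C_0:=\min\{1-\mu\xi/C_H,\,1-\mu(1-\xi)/C_{N,p,s}\}>0$, and estimating the right side by Sobolev embedding and H\"older on $\{u_k>0\}$, I obtain $C_0\lv u_k^+\rv^p\leq a_k C_S^p\lv u_k^+\rv^p|\{u_k>0\}|^{1-p/p^*}$. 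Noting that $u_k^+\not\equiv0$---otherwise $u_k\leq0$ and $-u_k\geq0$ would be a constant-sign eigenfunction for $b_k>\la_1$, contradicting Theorem \ref{sign_changing_eigenfunctions}---I may cancel $\lv u_k^+\rv^p>0$ to get $|\{u_k>0\}|\geq\delta>0$ for all large $k$. The symmetric argument on $u_k^-$ yields $|\{u_k<0\}|\geq\delta>0$. Either lower bound contradicts the conclusion of the previous paragraph, completing the proof.

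The main obstacle is the identification $u=\pm\phi_1$: one must recognize that testing with $u^+$ sandwiches $\la_1\int_\om(u^+)^p$ between two quantities that both equal it, so the extremal case of \eqref{brasco_inequality_positive} combined with simplicity of $\la_1$ and strict positivity of $\phi_1$ rules out sign-changing limits. The adaptation of Lemma \ref{pointwise_gradient} to the Fu\v c\'ik right-hand side is routine, and the final two-sided bounds on $|\{u_k>0\}|$ and $|\{u_k<0\}|$ are direct analogues of the isolation computation already used for $\la_1$ in the spectrum of $\mathcal{T}$.
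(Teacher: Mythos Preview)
Your proof is correct and follows essentially the same approach as the paper's: contradiction, normalization in $L^p$, boundedness via the mixed interpolated Hardy inequality, extraction of a weakly convergent subsequence with a.e.\ gradient convergence, identification of the limit as $\pm\phi_1$ via the sandwich argument with $u^+$ and simplicity of $\la_1$, and finally the two-sided lower bound on $|\{u_k>0\}|$ and $|\{u_k<0\}|$ via Sobolev--H\"older to reach a contradiction. Your write-up is in fact slightly more careful in two places---you explicitly justify why $u_k^\pm\not\equiv0$ (via Theorem \ref{sign_changing_eigenfunctions}) before dividing, and you note the routine adaptation of Lemma \ref{pointwise_gradient} to the Fu\v c\'ik right-hand side---but the strategy is the same as the paper's.
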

\begin{proof}
On the contrary, assume that there exists a sequence $(a_k, b_k) \in \sum_{p,\mu,\theta}$ with $a_k > \lambda_1, b_k > \la_1$ and $(a_k, b_k) \to (a, b)$ with $a = \lambda_1$ or $b = \lambda_1$. Without loss of generality, we can assume $a = \la_1$. Let $u_k \in X(\om)$ be a solution of
\begin{equation}\label{isolated_eq1}
-\plap u_k + \pfrac u_k - \mu \frac{|u_k|^{p-2}u_k}{|x|^{p\theta}} = a_k (u_k^+)^{p-1} - b_k (u_k^-)^{p-1} \quad \text{in } \Omega, \quad u_k = 0 \text{ on } \mathbb{R}^n \setminus \Omega
\end{equation}
with $\|u_k\|_{p} = 1$. Then, testing \eqref{isolated_eq1} with \(u_k\), we have
\begin{align*}
\int_{\om} |\grad u_k |^p + \rnnn \frac{|u_k(x) - u_k(y)|^p}{|x - y|^{N + ps}} \, dxdy - \mu \int_\Omega \frac{|u_k|^p}{|x|^{p\theta}} \, dx &= a_k \int_\Omega  (u_k^+)^p \, dx - b_k \int_\Omega  (u_k^-)^p \, dx \\
&\leq (a_k + b_k) \int_{\om} |u_k|^p = (a_k + b_k).
\end{align*}
From mixed interpolated Hardy inequality \eqref{interpolated_hardy} we deduce that $\{ u_k \}$ is a bounded sequence in $X(\om)$. Therefore up to a subsequence $u_k \rightharpoonup u$ weakly in $X(\om)$ and $u_k \to u$ strongly in $L^p(\Omega)$. Additionally, Lemma \ref{pointwise_gradient} gives that $\grad u_k(x) \to \grad u(x)$ a.e. in $\om$ as $k \to \infty$. Thus, taking limit $k \to \infty$ in \eqref{isolated_eq1}, we get
\begin{equation}\label{isolated_eq2}
-\plap u + \pfrac u - \mu \frac{|u|^{p-2} u}{|x|^{p\theta}} = \lambda_1  (u^+)^{p-1} - b (u^-)^{p-1} \quad \text{in } \Omega, \quad u = 0 \text{ on } \mathbb{R}^n \setminus \Omega.
\end{equation}
Taking $u^+$ as a test function in \eqref{isolated_eq2}, we obtain
\begin{align*}
\int_{\om} |\grad u^+|^p + \rnnn \frac{F(u(x) - u(y))(u^+(x) - u^+(y))}{|x - y|^{N + ps}} \, dx dy - \mu \int_\Omega \frac{(u^+)^p}{|x|^{p\theta}} \, dx
= \lambda_1 \int_\Omega  (u^+)^p .
\end{align*}
Using the definition of $\la_1$ together with the inequality \eqref{brasco_inequality_positive} in the above, we deduce
\begin{align*}
\lambda_1 \int_\Omega  (u^+)^p   \leq  \int_{\om} | \grad u^+|^p + \rnnn \frac{|u^+(x) - u^+(y)|^p}{|x - y|^{N + ps}} \, dxdy - \mu \int_\Omega \frac{(u^+)^p}{|x|^{p\theta}} \, dx \leq  \lambda_1 \int_\Omega  (u^+)^p.
\end{align*}
Thus
\[
\int_{\om} | \grad u^+|^p + \rnnn \frac{|u^+(x) - u^+(y)|^p}{|x - y|^{N + ps}} \, dxdy - \mu \int_\Omega \frac{(u^+)^p}{|x|^{p\theta}} \, dx = \lambda_1 \int_\Omega  (u^+)^p \, dx.
\]
Hence, from the simplicity of the first eigenvalue, either $u^+ \equiv 0$ or $u = \phi_1$. If $u^+ \equiv 0$, then $u \leq 0$ and \eqref{isolated_eq1} implies that $u$ is an eigenfunction with $u \leq 0$ so that $u = -\phi_1$. So, in any case, $u_k$ converges to either $\phi_1$ or $-\phi_1$ in $L^p(\Omega)$. Thus
\begin{equation} \label{isolated_eq3}
     \text{either } |\{x \in \Omega : u_k(x) < 0\}| \to 0 \quad \text{or} \quad |\{x \in \Omega : u_k(x) > 0\}| \to 0,
\end{equation}
as $k \to \infty$. Now, putting $u_k^+$ as test function in \eqref{isolated_eq1}, we get
\begin{equation}\label{isolated_eqn_new}
\int_{\om} | \grad u_k^+|^p + \rnnn \frac{F(u_k(x) - u_k(y))(u_k^+(x) - u_k^+(y))}{|x - y|^{N + ps}}  - \mu \int_\Omega \frac{(u_k^+)^p}{|x|^{p\theta}} = a_k \int_\Omega  (u_k^+)^p .
\end{equation}
Now, using the mixed interpolated Hardy inequality, relations \eqref{isolated_eqn_new}, \eqref{brasco_inequality_positive}, H\"older's inequality, and Sobolev inequality, we get
\begin{align*}
 \left( 1 - \frac{\mu \xi}{C_H}\right) \int_{\om} |\grad u_k^+|^p \leq &  \left( 1 - \frac{\mu \xi}{C_H}\right) \int_{\om} |\grad u_k^+|^p + \left( 1 - \frac{\mu (1 - \xi)}{C_{N,p,s}} \right)\rnnn \frac{|u_k^+(x) - u_k^+(y)|^p}{|x - y|^{N + p s}}  \\
 \leq & \int_{\om} | \grad u_k^+|^p + \rnnn \frac{|u_k^+(x) - u_k^+(y)|^p}{|x - y|^{N + p s}} \, dxdy - \mu \int_\Omega \frac{(u_k^+)^p}{|x|^{p\theta}} \, dx \\
 \leq & ~a_k \int_\Omega  (u_k^+)^p \, dx \\
 \leq &~ a_k C |\{x \in \Omega : u_k(x) > 0\}|^{1 - \frac{p}{q}}\int_{\om} |\grad u_k^+|^p,
\end{align*}
with a constant $C > 0$, and $p < q \leq p^* = \frac{Np}{N - p}$. Then we have
\[
    |\{x \in \Omega : u_k(x) > 0\}|^{1 - \frac{p}{q}} \geq a_k^{-1} C^{-1} \left( 1 - \frac{\mu \xi}{C_H}\right).
\]
Similarly, one can show that
\[
    |\{x \in \Omega : u_k(x) < 0\}|^{1 - \frac{p}{q}} \geq b_k^{-1} C^{-1}\left( 1 - \frac{\mu \xi}{C_H}\right).
\]
This gives contradiction to \eqref{isolated_eq3}. Hence the trivial lines $\{\lambda_1\} \times \mathbb{R}$ and $\mathbb{R} \times \{\lambda_1\}$ are isolated in $\sum_{p,\mu,\theta}$. 
\end{proof}
Next, we have a topological lemma that is borrowed from the \cite{Cuesta1999fucik}.
\begin{lemma}\label{topology_of_sphere}
Let $S$ be as in \eqref{unit_sphere}; then it has the following properties
\begin{enumerate}
\item $S$ is locally arcwise connected.
\item Any open, connected subset $O$ of $S$ is arcwise connected.
\item If $O'$ is any connected component of an open set $O \subset S$, then $\partial O' \cap O = \emptyset$.
\end{enumerate}
\end{lemma}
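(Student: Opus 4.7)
The plan is to prove (1) by constructing an explicit local retraction of $X(\om)$ onto $S$ near each point, and then to derive (2) and (3) as general consequences of the local arcwise connectedness furnished by (1). The only nontrivial analytic step is (1); parts (2) and (3) are standard point-set topology facts once local arcwise connectedness is available.

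For (1), I would fix $u_0 \in S$ and consider the radial projection $\Pi(v) := v/\lv v \rv_p$, which is defined and continuous on the open set $\{ v \in X(\om) : \lv v \rv_p > 0 \}$; continuity of $v \mapsto \lv v \rv_p$ on $X(\om)$ follows from the continuous embedding $X(\om) \hookrightarrow L^p(\om)$. Choose $\e > 0$ so small that $\lv v \rv_p \geq 1/2$ whenever $\lv v - u_0 \rv < \e$. Then for each $u \in S$ with $\lv u - u_0 \rv < \e$, the segment $\sigma_t := (1-t) u_0 + t u$, $t \in [0,1]$, satisfies $\lv \sigma_t \rv_p > 0$ throughout, and $\gamma(t) := \Pi(\sigma_t)$ is a continuous curve in $S$ joining $u_0$ to $u$. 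Hence $B(u_0,\e) \cap S$ is arcwise connected, proving (1).

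For (2), I would use the classical argument: fix $u_0 \in O$ and let $A := \{ u \in O : u \text{ is joined to } u_0 \text{ by an arc in } O \}$. Both $A$ and $O \setminus A$ are open in $O$ by (1), and since $u_0 \in A$, the connectedness of $O$ forces $A = O$. For (3), local arcwise connectedness implies local connectedness, so every connected component $O'$ of the open set $O$ is itself open. Assume for contradiction $u \in \partial O' \cap O$. By (1), there is an arcwise connected neighborhood $V \subset O$ of $u$, and $V \cap O' \neq \emptyset$ since $u \in \partial O'$. Then $V \cup O' \subset O$ is connected (its two pieces intersect), and by maximality of $O'$ as a connected component of $O$ we must have $V \cup O' = O'$. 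This forces $u \in V \subset O'$, contradicting the openness of $O'$ together with $u \in \partial O'$. The main technical point is the continuity of $\Pi$ in part (1); once that is in place, (2) and (3) reduce to a standard open--closed decomposition and maximality argument, respectively.
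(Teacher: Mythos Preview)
Your argument is correct and follows exactly the standard route from \cite{Cuesta1999fucik}; the paper itself gives no proof but merely invokes that reference. One small point worth tightening in (1): to conclude that $B(u_0,\e)\cap S$ is itself arcwise connected you need the projected arc $\gamma(t)=\Pi(\sigma_t)$ to remain in that ball, not just in $S$; this follows from the continuity of $\Pi$ at $u_0$ (a direct estimate gives $\lv \gamma(t)-u_0\rv \leq C\e$ uniformly in $t$), so a nested-ball choice of $\e$ closes the argument.
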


\noi The next lemma also follows from the Lemma $3.6$ of \cite{Cuesta1999fucik}
\begin{lemma}\label{components_containing_critical_point}
Let $O = \{u \in S : \mathcal{J}_d(u) < r\}$, then any connected component of $O$ contains a critical point of $\mathcal{J}_d$.
\end{lemma}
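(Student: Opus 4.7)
The plan is to set $c := \inf_{u \in C}\tilde{\mathcal{J}}_d(u)$ for a fixed connected component $C$ of $O$, produce a Palais-Smale-type sequence in $C$ at level $c$, and invoke Lemma \ref{ps_condition} to extract a critical point that lies in $C$. First I would observe that $\tilde{\mathcal{J}}_d$ is continuous, so $O$ is open in $S$; combined with Lemma \ref{topology_of_sphere}$(1)$--$(2)$, every component of $O$ is open in $S$. Moreover $c < r$ strictly, because every $u \in C$ satisfies $\tilde{\mathcal{J}}_d(u) < r$ while $\tilde{\mathcal{J}}_d(u) \geq c$, so the equality $c = r$ would be inconsistent with $C \neq \emptyset$.

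Next I would apply Ekeland's variational principle to the continuous, bounded-below functional $\tilde{\mathcal{J}}_d$ restricted to the complete metric space $\overline{C} \subset S$. This yields $\{v_n\} \subset \overline{C}$ with $\tilde{\mathcal{J}}_d(v_n) \to c$ and
\[
\tilde{\mathcal{J}}_d(v_n) \leq \tilde{\mathcal{J}}_d(w) + \tfrac{1}{n}\|v_n - w\| \quad \text{for all } w \in \overline{C}.
\]
The crucial ingredient is Lemma \ref{topology_of_sphere}$(3)$, which gives $\partial C \cap O = \emptyset$: if some $v_n$ lay on $\partial C$, then $v_n \notin O$, forcing $\tilde{\mathcal{J}}_d(v_n) \geq r$, contradicting $\tilde{\mathcal{J}}_d(v_n) \leq c + \frac{1}{n} < r$ for large $n$. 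Hence eventually $v_n$ belongs to the open set $C \subset S$, so the Ekeland inequality holds for all $w \in S$ in a neighborhood of $v_n$, and the standard argument on the $C^{1,1}$-manifold $S$ upgrades this to $\|\bar{\mathcal{J}}_d'(v_n)\|_* \to 0$ in the sense of \eqref{derivative_on_manifold}.

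Finally, the (P.-S.) condition from Lemma \ref{ps_condition} provides, up to a subsequence, $v_n \to v$ strongly in $X(\om)$, so $\tilde{\mathcal{J}}_d(v) = c$ and $v$ is a critical point of $\tilde{\mathcal{J}}_d$. To conclude I would verify $v \in C$: $v_n \in \overline{C}$ gives $v \in \overline{C}$; $\tilde{\mathcal{J}}_d(v) = c < r$ gives $v \in O$; and if $v$ lay in a component $C' \neq C$, the open set $C'$ would be a neighborhood of $v$ meeting $C$ (since $v_n \to v$ with $v_n \in C$), contradicting the disjointness of components of $O$. The main obstacle is ensuring the Ekeland minimizers stay away from $\partial C$ so that the Ekeland estimate translates into a genuine bound on the constrained derivative; this is precisely what the strict inequality $c < r$ together with Lemma \ref{topology_of_sphere}$(3)$ delivers.
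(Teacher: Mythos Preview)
Your proposal is correct and is precisely the argument of Cuesta--de Figueiredo--Gossez \cite[Lemma~3.6]{Cuesta1999fucik}, which the paper simply cites in place of a proof. The key steps (Ekeland on $\overline{C}$, using Lemma~\ref{topology_of_sphere}(3) with $c<r$ to force the Ekeland sequence into the open component $C$, then invoking Lemma~\ref{ps_condition}) coincide with that reference, so there is nothing to compare.
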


\noi The next two lemmas are crucial in estimating the value of the paths considered to prove the existence of the first nontrivial curve in the \fucik spectrum.
\begin{lemma}[\cite{brasco_second_eigenvalue}]\label{crucial_lemma_in_paths}
Let $1 \leq p \leq \infty$ and $U, V \in \mathbb{R}$ such that $U V \leq 0$. Define the following function
\[
g(t) = |U - tV|^p + |U - V|^{p-2} (U - V) V |t|^p, \quad t \in \mathbb{R}.
\]
Then we have
\[
g(t) \leq g(1) = |U - V|^{p-2} (U - V) U, \quad t \in \mathbb{R}.
\]
\end{lemma}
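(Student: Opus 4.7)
The plan is to reduce the two-parameter statement to a one-parameter scalar inequality by exploiting the symmetries of $g$, and then prove that scalar inequality by convexity.

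First, I would notice that $g$ is unchanged under the transformation $(U,V)\mapsto(-U,-V)$, so it is enough to treat the case $V\ge 0$. The subcase $V=0$ is immediate since $g\equiv |U|^p=g(1)$. When $V>0$, the hypothesis $UV\le 0$ forces $U\le 0$; writing $U=-aV$ with $a\ge 0$ and pulling out the common factor $V^p$ from every term in $g$, the target inequality $g(t)\le g(1)$ collapses to the scalar statement
\[
|a+t|^{p}\le (a+1)^{p-1}\bigl(a+|t|^{p}\bigr)\qquad\text{for all }t\in\mathbb{R},\ a\ge 0,\ p\ge 1,
\]
with the equality case $t=1$ reproducing $g(1)=|U-V|^{p-2}(U-V)U$ after undoing the normalisation.

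The heart of the argument is the scalar inequality above, which I would break into three regions. For $t\ge 0$, I would apply Jensen's inequality to the convex function $x\mapsto x^{p}$ using the convex combination $a+t=(a+1)\bigl(\tfrac{a}{a+1}\cdot 1+\tfrac{1}{a+1}\cdot t\bigr)$, which gives $(a+t)^{p}\le (a+1)^{p-1}(a+t^{p})$ at once, with equality precisely at $t=1$. For $-a\le t<0$, I would estimate $|a+t|^{p}\le a^{p}\le a(a+1)^{p-1}$, using $a^{p-1}\le (a+1)^{p-1}$ when $p\ge 1$; the right-hand side of the target is $\ge a(a+1)^{p-1}$, so the bound follows. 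For $t<-a$, I would observe $|a+t|=|t|-a\le |t|$, so $|a+t|^{p}\le |t|^{p}\le (a+1)^{p-1}(a+|t|^{p})$ since $(a+1)^{p-1}\ge 1$.

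The principal obstacle is the asymmetry between the $|a+t|^p$ on the left and the $|t|^p$ on the right: convexity handles the region where $a+t$ and $t$ have matching signs, but on $t<0$ one must invoke elementary monotonicity bounds, and the critical bookkeeping is at the corner $t=-a$ where the two sub-cases meet. The endpoint exponents should be checked separately: at $p=1$ Jensen becomes equality and the inequality is trivial, while $p=\infty$ follows by a passage to the limit. An alternative, slightly slicker route is to compute $g'(t)=pV\bigl[F((U-V)t)-F(U-tV)\bigr]$ with $F(x)=|x|^{p-2}x$, use strict monotonicity of $F$ to conclude that $t=1$ is the only critical point (when $UV\le 0$ and $U,V\ne 0$), and verify $g(t)\to-\infty$ as $|t|\to\infty$ from the dominant coefficient $|V|^p+|U-V|^{p-2}(U-V)V<0$; this identifies $t=1$ as the global maximum without a case split.
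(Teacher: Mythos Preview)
The paper does not supply its own proof of this lemma: it is quoted verbatim from \cite{brasco_second_eigenvalue} and used as a black box in Theorem~\ref{first_non_trivial_curve}. So there is no in-paper argument to compare against.

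Your argument is correct and self-contained. The reduction via the symmetry $(U,V)\mapsto(-U,-V)$ and the normalisation $U=-aV$ cleanly collapses the claim to the scalar inequality $|a+t|^{p}\le (a+1)^{p-1}(a+|t|^{p})$ for $a\ge 0$, $p\ge 1$, and your three-region treatment of that inequality is sound: Jensen on $t\ge 0$, the monotonicity bound $|a+t|^p\le a^p\le a(a+1)^{p-1}$ on $[-a,0)$, and $|a+t|\le |t|$ on $(-\infty,-a)$ all go through as written, with the case junctions at $t=0$ and $t=-a$ matching. The alternative derivative route you sketch is also valid for $p>1$: the computation $g'(t)=pV\bigl[F((U-V)t)-F(U-tV)\bigr]$ is correct, strict monotonicity of $F$ forces $U(t-1)=0$, and the leading coefficient $|V|^p+|U-V|^{p-2}(U-V)V=V\bigl[V^{p-1}-(V-U)^{p-1}\bigr]<0$ (taking $V>0$, $U<0$) gives $g(t)\to-\infty$, so $t=1$ is the global maximum.

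One cosmetic remark: the hypothesis ``$1\le p\le\infty$'' in the statement is almost certainly a typo for $1\le p<\infty$ (the expression $|U-tV|^\infty$ has no meaning), so your ``passage to the limit'' for $p=\infty$ is neither needed nor meaningful; you can simply drop it.
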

\begin{lemma}[Lemma $4.1$ of \cite{franzina_fractional_p_ev}]\label{crucial_lemma_in_paths_2}
Let $\alpha \in (0,1)$ and $p > 1$. For any nonnegative functions $u, v \in W^{1,p}(\Omega)$, consider the function $\sigma_t(x) := [(1-t)v(x)^p + tu(x)^p]^{\frac{1}{p}}$ for all $t \in [0,1]$. Then for all $t \in [0,1]$,

\begin{align*}
    \int_{\mathbb{R}^N} \int_{\mathbb{R}^N} \frac{|\sigma_t(x) - \sigma_t(y)|^p \, dx \, dy}{|x - y|^{N + ps}} \leq (1-t) \int_{\mathbb{R}^N} \int_{\mathbb{R}^N} \frac{|v(x) - v(y)|^p \, dx \, dy}{|x - y|^{N + ps}} \\
    + t \int_{\mathbb{R}^N} \int_{\mathbb{R}^N} \frac{|u(x) - u(y)|^p  \, dx \, dy}{|x - y|^{N + ps}}.
\end{align*}
\end{lemma}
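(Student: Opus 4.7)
The plan is to reduce the claimed inequality to a pointwise estimate and then integrate. Specifically, the whole lemma will follow once I prove that, for all $x, y \in \mathbb{R}^N$ and all $t \in [0,1]$,
\begin{equation}\label{pointwise_convexity_sigma}
|\sigma_t(x) - \sigma_t(y)|^p \leq (1-t)|v(x) - v(y)|^p + t|u(x) - u(y)|^p.
\end{equation}
Dividing both sides by $|x-y|^{N+ps}$ and integrating over $\mathbb{R}^N \times \mathbb{R}^N$ immediately yields the stated estimate. So the entire content is the pointwise bound \eqref{pointwise_convexity_sigma}.

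To establish \eqref{pointwise_convexity_sigma}, I would interpret the quantities as $\ell^p$-norms of vectors in $\mathbb{R}^2$. Introduce
\[
X := \bigl((1-t)^{1/p}\, v(x),\ t^{1/p}\, u(x)\bigr), \qquad Y := \bigl((1-t)^{1/p}\, v(y),\ t^{1/p}\, u(y)\bigr).
\]
Using the nonnegativity of $u, v$ and $t \in [0,1]$, a direct computation gives
\[
\|X\|_{\ell^p}^p = (1-t)\,v(x)^p + t\,u(x)^p = \sigma_t(x)^p, \qquad \|Y\|_{\ell^p}^p = \sigma_t(y)^p,
\]
and
\[
\|X - Y\|_{\ell^p}^p = (1-t)|v(x) - v(y)|^p + t|u(x) - u(y)|^p.
\]
Since $p \geq 1$, the $\ell^p$-norm on $\mathbb{R}^2$ is a genuine norm, so the reverse triangle inequality yields
\[
|\sigma_t(x) - \sigma_t(y)| \;=\; \bigl|\|X\|_{\ell^p} - \|Y\|_{\ell^p}\bigr| \;\leq\; \|X - Y\|_{\ell^p},
\]
and raising to the $p$-th power delivers exactly \eqref{pointwise_convexity_sigma}.

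The main obstacle is one of recognition rather than of technique: a naive attempt to prove \eqref{pointwise_convexity_sigma} by case analysis on the signs of $v(x) - v(y)$ and $u(x) - u(y)$ handles the "concordant" case via weighted Minkowski, but the "discordant" case is awkward; and a direct Hessian calculation showing joint convexity of $(a,b) \mapsto |a^{1/p} - b^{1/p}|^p$ on $[0,\infty)^2$ is unpleasant for general $p > 1$. Packaging the two contributions into a single $\mathbb{R}^2$-vector bypasses both issues, and reduces the entire lemma to one application of the triangle inequality in $\ell^p$. The final integration step is routine: if the right-hand side of \eqref{pointwise_convexity_sigma} is not integrable against $|x-y|^{-(N+ps)}$ the asserted inequality is vacuous, and if it is (as holds for $u, v$ with finite Gagliardo seminorm, which is the only situation of interest in the paper), Fubini applied to the pointwise bound concludes the proof.
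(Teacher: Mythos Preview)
Your proof is correct. The paper does not give its own proof of this lemma but simply cites it from Franzina--Palatucci, and your argument---recognising $\sigma_t(x)$ as the $\ell^p$-norm of the vector $\bigl((1-t)^{1/p}v(x),\,t^{1/p}u(x)\bigr)$ and invoking the reverse triangle inequality---is exactly the ``hidden convexity'' proof given in that reference.
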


\begin{theorem}\label{first_non_trivial_curve}
Let $d \geq 0$, then the point $(d + c(d), c(d))$ is the first nontrivial point of $\sum_{p,\mu,\theta}$ in the intersection between $\sum_{p,\mu,\theta}$ and the line $(d,0) + t(1,1)$.
\end{theorem}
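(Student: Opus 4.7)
The plan is to argue by contradiction, adapting the Cuesta--de Figueiredo--Gossez strategy to the mixed-operator, Hardy-perturbed setting. Suppose there exists $t_0 \in (\lambda_1, c(d))$ such that $(d+t_0, t_0) \in \Sigma_{p,\mu,\theta}$, and let $u \in S$ be a corresponding critical point of $\tilde{\mathcal{J}}_d$ at level $t_0$. Since $t_0 > \lambda_1 \ge \lambda_1 - d$, $u$ cannot coincide with $\pm\phi_1$; Theorem~\ref{sign_changing_eigenfunctions} applied to the one-sign eigenvalue equation $u$ would satisfy if it were of constant sign then forces $u$ to change sign, so $u^\pm \not\equiv 0$. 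The aim is to construct an admissible $\gamma \in \Gamma$ joining $-\phi_1$ and $\phi_1$ inside $S$ with $\max_\gamma \mathcal{J}_d \le t_0 < c(d)$, directly contradicting the minimax formula \eqref{minimax_level}.

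Set $v_+ := u^+/\|u^+\|_p$ and $v_- := u^-/\|u^-\|_p$. Testing the equation for $u$ against $u^+$ and against $-u^-$, and using \eqref{brasco_inequality_positive}--\eqref{brasco_inequality_negative} for the nonlocal term together with the disjoint-support identities for the local, Hardy and right-hand side terms, one obtains
\[
\int_\om |\grad u^+|^p + [u^+]_{s,p}^p - \mu \int_\om \frac{(u^+)^p}{|x|^{p\theta}} \le (d+t_0)\|u^+\|_p^p,
\]
together with the analogous estimate for $u^-$ whose right-hand side is $t_0 \|u^-\|_p^p$. Rescaling by $\|u^\pm\|_p^p$ gives $\mathcal{J}_d(v_+) \le t_0$ and $\mathcal{J}_d(-v_-) \le t_0$.

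For the middle piece put $w(t) := (1+t)u^+ - (1-t)u^-$ and $\gamma_2(t) := w(t)/\|w(t)\|_p$ for $t \in [-1,1]$; then $\gamma_2(-1) = -v_-$, $\gamma_2(0) = u$, $\gamma_2(1) = v_+$, and $\|w(t)\|_p > 0$ throughout. By the disjoint supports of $u^\pm$ the local, Hardy, $L^p$ and $\int(\cdot^+)^p$ parts of $\mathcal{J}_d(w(t))$ split exactly as $(1+t)^p$- and $(1-t)^p$-weighted sums of the corresponding quantities for $u^\pm$. The crucial step is the pointwise nonlocal estimate
\[
|w(t)(x) - w(t)(y)|^p \le (1+t)^p F(u(x)-u(y))(u^+(x)-u^+(y)) - (1-t)^p F(u(x)-u(y))(u^-(x)-u^-(y)),
\]
valid for all $x,y \in \mathbb{R}^N$: the two same-sign regions of $(u(x), u(y))$ give equality, while on the two mixed-sign regions the rescaling $(1+t)U - (1-t)V = (1+t)(U - \tau V)$ with $\tau = (1-t)/(1+t)$ reduces the claim to Lemma~\ref{crucial_lemma_in_paths} applied with $U = u(x)$, $V = u(y)$ (which satisfy $UV \le 0$ there). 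Integrating this bound and combining it with the two testing identities from the previous paragraph, all the nontrivial terms of $\mathcal{J}_d(w(t))$ collapse into $(1+t)^p t_0 \|u^+\|_p^p + (1-t)^p t_0 \|u^-\|_p^p = t_0 \|w(t)\|_p^p$, and hence $\mathcal{J}_d(\gamma_2(t)) \le t_0$ for every $t \in [-1,1]$.

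Finally, connect $v_+$ to $\phi_1$ and $-v_-$ to $-\phi_1$ by the $L^p$-convex interpolations
\[
\gamma_1(s) := [(1-s)\phi_1^p + s v_+^p]^{1/p}, \qquad \gamma_3(s) := -[(1-s) v_-^p + s\phi_1^p]^{1/p},
\]
each a continuous curve in $S$ consisting of functions of constant sign. Lemma~\ref{crucial_lemma_in_paths_2} bounds the Gagliardo seminorm along these curves by the convex combination of the endpoint seminorms; the Hardy and $L^p$ integrals are linear in $|\cdot|^p$; and an elementary pointwise H\"older inequality yields $|\grad [(1-s)f^p + s g^p]^{1/p}|^p \le (1-s)|\grad f|^p + s|\grad g|^p$ for nonnegative $f,g$. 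Using $\mathcal{J}_d(\phi_1) = \lambda_1 - d$ and $\mathcal{J}_d(-\phi_1) = \lambda_1$, these combine to
\[
\mathcal{J}_d(\gamma_1(s)) \le (1-s)(\lambda_1 - d) + s \mathcal{J}_d(v_+) \le t_0, \qquad \mathcal{J}_d(\gamma_3(s)) \le (1-s)\mathcal{J}_d(-v_-) + s \lambda_1 \le t_0.
\]
Concatenating $\gamma_3$ (reversed), $\gamma_2$, and $\gamma_1$ delivers the promised $\gamma \in \Gamma$ along which $\mathcal{J}_d \le t_0 < c(d)$, contradicting \eqref{minimax_level}. The main technical obstacle will be the four-region pointwise estimate for $|w(t)(x) - w(t)(y)|^p$: the reparametrization needed to invoke Lemma~\ref{crucial_lemma_in_paths} together with the careful bookkeeping of the sign of $F(u(x)-u(y))$ across mixed-sign regions is precisely the step where the fractional part makes the argument strictly richer than the purely local Cuesta--de Figueiredo--Gossez one.
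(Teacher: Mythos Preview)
Your proof is correct and takes a genuinely different, more direct route than the paper's. Both arguments begin by contradiction, pick a critical point $u\in S$ at a level strictly between $\lambda_1$ and $c(d)$, and handle the ``middle'' portion of the path near $u$ via the pointwise inequality of Lemma~\ref{crucial_lemma_in_paths}. The divergence is in how the path is closed up to $\pm\phi_1$. The paper first uses the isolation of the trivial lines in $\Sigma_{p,\mu,\theta}$ to choose the level $\eta$ \emph{minimal}, then combines three explicit pieces $\gamma_1,\gamma_2,\gamma_3$ with a fourth piece obtained by a descent step from $u^-/\|u^-\|_p$ and the topological Lemmas~\ref{topology_of_sphere}--\ref{components_containing_critical_point} (every component of a strict sublevel set contains a critical point, which by minimality must be $\phi_1$ or $-\phi_1$), followed by the reflection trick $|\tilde{\mathcal J}_d(v)-\tilde{\mathcal J}_d(-v)|\le d$. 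You bypass all of this: a single affine family $(1+t)u^+-(1-t)u^-$ replaces the paper's three pieces, and you reach $\pm\phi_1$ by explicit $L^p$-convex interpolation $[(1-s)\phi_1^p+s v_\pm^p]^{1/p}$, exploiting the hidden convexity of the Dirichlet integral (your pointwise H\"older/Jensen estimate $|\nabla\sigma_s|^p\le (1-s)|\nabla f|^p+s|\nabla g|^p$) together with Lemma~\ref{crucial_lemma_in_paths_2} for the Gagliardo part. Your approach is shorter and self-contained, and notably does \emph{not} require the isolation of the trivial lines, the minimality of the level, or the connected-component machinery; the paper's argument follows the classical Cuesta--de~Figueiredo--Gossez template, which is more robust in settings where an explicit positive first eigenfunction or such convexity along power-mean curves is unavailable.
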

\begin{proof}
Assume by contradiction that there exists $\eta$ such that $\lambda_1 < \eta < c(d)$ and $(d + \eta, \eta) \in \sum_{p,\mu,\theta}$. Using the fact that the curves $\{\lambda_1\} \times \mathbb{R}$ and $\mathbb{R} \times \{\lambda_1\}$ are isolated in $\sum_{p,\mu,\theta}$ and $\sum_{p,\mu,\theta}$ is closed, we can choose such a point with $\eta$ minimum. Then $\tilde{\mathcal{J}}_d$ has a critical value $\eta$ corresponding to $u \in S$ with $\lambda_1 < \eta < c(d)$, but there is no critical value in $(\lambda_1, \eta)$. We will construct a path connecting from $\phi_1$ to $-\phi_1$ such that $\tilde{\mathcal{J}}_d \leq \eta$, then we get a contradiction with the definition of $c(d)$, which will complete the proof.

\noi Now, $u$ satisfies weakly
\begin{equation}\label{nontrivial_eq1}
    -\plap u + \pfrac u - \mu \frac{|u|^{p-2}u}{|x|^{p \theta}} = (d + \eta) (u^+)^{p-1} - \eta (u^-)^{p-1}.
\end{equation}
Testing \eqref{nontrivial_eq1} by $u^+$ and $u^-$, we have 
\small{ \begin{align}\label{non_trivial_eq_plus}
\int_{\om} | \grad u^+|^p + \rnnn \frac{F(u(x) - u(y))(u^+(x) - u^+(y))}{|x - y|^{N + p s}} - \mu \int_\Omega \frac{(u^+)^p}{|x|^{p\theta}}   =  (d + \eta) \int_\Omega  (u^+)^p ,
\end{align}}
and
\begin{align}\label{non_trivial_eq_minus}
\int_{\om} | \grad u^-|^p - \rnnn \frac{F(u(x) - u(y))(u^-(x) - u^-(y))}{|x - y|^{N + p s}} - \mu \int_\Omega \frac{(u^-)^p}{|x|^{p\theta}}   =  \eta \int_\Omega  (u^-)^p .
\end{align}
Using \eqref{non_trivial_eq_plus} and \eqref{non_trivial_eq_minus} together with \eqref{brasco_inequality_positive}, and \eqref{brasco_inequality_negative}, we obtain
\[
\int_{\om} | \grad u^+|^p + \rnnn \frac{|u^+(x) - u^+(y)|^p}{|x - y|^{N + p s}} \, dxdy - \mu \int_\Omega \frac{(u^+)^p}{|x|^{p\theta}} \, dx \leq  (d + \eta) \int_\Omega  (u^+)^p \, dx,
\]
and
\[
\int_{\om} | \grad u^-|^p + \rnnn \frac{|u^-(x) - u^-(y)|^p}{|x - y|^{N + p s}} \, dxdy - \mu \int_\Omega \frac{(u^-)^p}{|x|^{p\theta}} \, dx \leq  \eta \int_\Omega  (u^-)^p \, dx,
\]
Thus, we obtain,
\begin{equation}\label{non_trivial_eq3}
\tilde{\mathcal{J}}_d(u) = \eta, \quad \tilde{\mathcal{J}}_d \left( \frac{u^+}{\| u^+ \|_{L^p}} \right) \leq \eta
\end{equation}
\begin{equation}\label{non_trivial_eq4}
\tilde{\mathcal{J}}_d \left( \frac{u^-}{\| u^- \|_{L^p}} \right) \leq \eta - d, \quad \tilde{\mathcal{J}}_d \left( \frac{-u^-}{\| u^- \|_{L^p}} \right) \leq \eta.
\end{equation}
Now, we define three paths $\gamma_1, \gamma_2,$ and $\gamma_3$ in $S$, which go from $u$ to $\frac{u^+}{\|u^+\|_{L^p}}$, $\frac{u^+}{\|u^+\|_{L^p}}$ to $\frac{u^-}{\|u^-\|_{L^p}}$, and $-\frac{u^-}{\|u^-\|_{L^p}}$ to $u$ respectively.

\[
\gamma_1(t) = \frac{u^+ - (1 - t) u^-}{\|u^+ - (1 - t) u^-\|_{L^p}}, \quad \gamma_2(t) = \frac{[(1 - t) (u^+)^p + t (u^-)^p]^{1/p}}{\|(1 - t) (u^+)^p + t (u^-)^p\|_{L^p}}, \quad \gamma_3(t) = \frac{t u^+ -  u^-}{\|t u^+ -  u^-\|_{L^p}}.
\]

\noi Using \eqref{non_trivial_eq_plus}, \eqref{non_trivial_eq_minus} and Lemma \ref{crucial_lemma_in_paths} with $U = u^+(x) - u^+(y)$ and $V = u^-(x) - u^-(y)$, we deduce that for all $t \in [0, 1]$,
\begin{align*}
\tilde{\mathcal{J}}(\gamma_1(t)) & \|u^+ - (1 - t) u^-\|_{L^p}^p = \left[\int_\Omega |\nabla u^+|^p \, dx  - \mu \int_{\om} \frac{|u^+|^p}{|x|^{p \theta}} ~ dx - d \int_{\om} (u^+)^p \right] \\
&+ (1 - t)^p \left[\int_\Omega |\nabla u^-|^p \, dx  - \mu \int_{\om} \frac{|u^-|^p}{|x|^{p \theta}} ~ dx  \right]  +  \int_{\mathbb{R}^N} \int_{\mathbb{R}^N} \frac{|U - (1-t)V|^{p} }{|x - y|^{N + p s}} \, dxdy\\
= &  \left[-\rnnn \frac{|U - V|^{p-2}(U - V)(U)}{|x - y|^{N + p s}} \,  + \eta \int_{\om} (u^+)^p \right]\\
& + (1 - t)^p \left[  \rnnn \frac{|U - V|^{p-2}(U - V)V }{|x - y|^{N + p s}} + \eta \int_{\om} (u^-)^p  \right] +   \int_{\mathbb{R}^N} \int_{\mathbb{R}^N} \frac{|U - (1-t)V|^{p} }{|x - y|^{N + p s}}  \\
\leq & ~\eta \int_{\om} (u^+)^p + \eta ( 1- t)^p \int_{\om} (u^-)^p  .
\end{align*}

\noi Taking $\sigma_t(x) = [(1 - t) (u^+)^p + t (u^-)^p]^{1/p}(x)$, then by means of Lemma \ref{crucial_lemma_in_paths_2}, we deduce
\begin{align*}
    \tilde{\mathcal{J}}(\gamma_2(t)) & \|(1 - t) (u^+)^p + t (u^-)^p\|_{L^p}^p =   (1 - t) \left[ \int_\Omega |\nabla u^+|^p \, dx - \mu  \int_{\om} \frac{|u^+|^p}{|x|^{p \theta}} ~ dx -  d \int_{\om} (u^+)p \right]   \\
    &~   +t \left[  \int_\Omega |\nabla u^-|^p \, dx -   \mu \int_{\om} \frac{|u^-|^p}{|x|^{p \theta}} ~ dx \right] + \rnnn \frac{\sigma_t(x) - \sigma_t(y)}{|x - y|^{N + ps}} \\
    \leq & ~ (1-t) \left[ \int_\Omega |\nabla u^+|^p \, dx  - \mu \int_{\om} \frac{|u^+|^p}{|x|^{p \theta}} ~ dx   -  d \int_{\om} (u^+)^p + \rnnn \frac{|U|^p}{|x-y|^{N + ps}} \right] \\
     & ~ +t \left[  \int_\Omega |\nabla u^-|^p \, dx -   \mu \int_{\om} \frac{|u^-|^p}{|x|^{p \theta}} ~ dx + \rnnn \frac{|V|^p}{|x-y|^{N+ps}} \right] \\
     \leq & ~  (1-t) \tilde{\mathcal{J}}(u^+) + t  \tilde{\mathcal{J}}(u^-) \\
     \leq &~ \eta \|(1 - t) (u^+)^p + t (u^-)^p\|_{L^p}^p.
\end{align*}
Again from \eqref{non_trivial_eq_plus}, \eqref{non_trivial_eq_minus} and Lemma \ref{crucial_lemma_in_paths} with $U = u^+(x) - u^+(y)$ and $V = u^-(x) - u^-(y)$, we obtain
\begin{align*}
\tilde{\mathcal{J}} (\gamma_3(t)) & \|u^+ - (1 - t) u^-\|_{L^p}^p = t^p \left[ \int_\Omega |\nabla u^+|^p \, dx  - \mu \int_{\om} \frac{|u^+|^p}{|x|^{p \theta}} ~ dx - d \int_{\om} (u^+)p \right] \\
&+  \left[ \int_\Omega |\nabla u^-|^p \, dx  - \mu \int_{\om} \frac{|u^-|^p}{|x|^{p \theta}} ~ dx \right] +  \int_{\mathbb{R}^N} \int_{\mathbb{R}^N} \frac{|t U - V|^{p} }{|x - y|^{N + p s}} \, dxdy \\
=& ~  t^p \left[ \rnnn \frac{|V - U|^{p-2}(V - U)U}{|x - y|^{N + p s}} \, dxdy + \eta \int_{\om} (u^+)^p \right] \\
&+  \left[  \rnnn \frac{-| V - U|^{p-2}(V - U)V }{|x - y|^{N + p s}} \, dxdy + \eta \int_{\om} (u^-)^p  \right] +  \int_{\mathbb{R}^N} \int_{\mathbb{R}^N} \frac{|V - t U|^{p} }{|x - y|^{N + p s}} \, dxdy\\
\leq & ~ \eta \int_{\om} (u^+)^p + ( 1- t)^p \eta \int_{\om} (u^-)^p .
\end{align*}
Thus, with the help of the paths $\gamma_1, \gamma_2$ and $\gamma_3$, we move from $-\frac{u^-}{\lv u^- \rv_{L^p}}$ to $\frac{u^-}{\lv u^- \rv_{L^p}}$ while keeping $\tilde{\mathcal{J}}(\gamma_i(t)) \leq \eta$.\\
Next, we investigate the levels below $\eta - d$. For this, consider $\mathcal{O} = \{ v \in S : \tilde{\mathcal{J}}_d(v) < \eta - d \}$. From \eqref{non_trivial_eq3} and \eqref{non_trivial_eq4}, we have $\phi_1 \in \mathcal{O}$ and $- \phi_1 \in \mathcal{O}$ if $\lambda_1 < \eta - d$. Additionally, from the choice of $\eta$, $\phi_1$ and $-\phi_1$ are the only possible critical points of $\tilde{\mathcal{J}}_d$ in $\mathcal{O}$.  Also, $\frac{u^-}{\|u^-\|_{L^p}}$ is not a critical point of $\tilde{\mathcal{J}}_d$, thanks to the fact that $\frac{u^-}{\|u^-\|_{L^p}}$ does not change sign and vanishes on a set of positive measure. Therefore, there exists a $C^1$ path $\sigma : [-\e, \e] \to \mathcal{S}$ with $\sigma(0) = \frac{u^-}{\|u^-\|_{L^p}}$ and $\frac{d}{dt} \tilde{\mathcal{J}}_d(\sigma(t))|_{t=0} \neq 0$.

With the help of this path, we can move from $\frac{u^-}{\|u^-\|_{L^p}}$ to a point $w$ with $\tilde{\mathcal{J}}_d(w) < \eta - d $. Applying Lemma \ref{components_containing_critical_point} along with Lemma \ref{topology_of_sphere}, we can continue from $w$ to $\phi_1$ or $- \phi_1$ with a path, say $\tilde{\sigma}$, in $S$ with levels strictly below $\eta - d$. Let us assume that it is $\phi_1$. At this point, we construct a path $\gamma_4(t)$ from $\frac{u^-}{\|u^-\|_{L^p}}$ to $\phi_1$. \\
Observe that $$ | \tilde{\mathcal{J}}_d(v) - \tilde{\mathcal{J}}_d(-v) | \leq d.$$
Then $$ \tilde{\mathcal{J}}_d( - \gamma_4(t)) \leq \tilde{\mathcal{J}}_d(\gamma_4(t) + d \leq \eta.$$
Thus, on the path $-\gamma_4(t)$, we are able to move from $-\phi_1$ to $- \frac{u^-}{\lv u^- \rv_{L^p}}$ by staying at the levels less than or equal to $\eta$. \\
Putting all together, we have constructed a path from $-\phi_1$ to $\phi_1$ while staying at levels less than $\eta$. This leads to a contradiction to the choice of $\eta$. Hence, we have our result.
\end{proof}

\begin{proposition}
The curve $d \mapsto (d + c(d), c(d))$, $d \in \mathbb{R}^+$ is nonincreasing (in the sense that $d_1 < d_2$ implies $d_1 + c(d_1) \leq d_2 + c(d_2)$ and $c(d_1) \geq c(d_2)$) and Lipschitz continuous.
\end{proposition}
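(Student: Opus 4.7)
The plan is to establish both statements as immediate consequences of a single two-sided estimate relating $\mathcal{J}_{d_1}$ and $\mathcal{J}_{d_2}$ on the sphere $S$. The key point is that the admissible path class
\[
\Gamma = \{\gamma \in C([-1,1], S): \gamma(-1) = -\phi_1,\ \gamma(1) = \phi_1\}
\]
does not depend on $d$, so minimaxes for different $d$ can be compared path-by-path.

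First I would fix $0 \leq d_1 < d_2$ and observe that for every $u \in S$,
\[
\mathcal{J}_{d_2}(u) - \mathcal{J}_{d_1}(u) = -(d_2 - d_1)\int_\Omega (u^+)^p\,dx,
\]
and since $0 \leq \int_\Omega (u^+)^p\,dx \leq \int_\Omega |u|^p\,dx = 1$ on $S$, this gives the pointwise sandwich
\[
\mathcal{J}_{d_1}(u) - (d_2 - d_1) \;\leq\; \mathcal{J}_{d_2}(u) \;\leq\; \mathcal{J}_{d_1}(u).
\]
Next, taking the supremum over $u \in \gamma([-1,1])$ for a fixed $\gamma \in \Gamma$ preserves these inequalities, and then taking the infimum over $\gamma \in \Gamma$ on both sides (the same family for both values of $d$) yields
\[
c(d_1) - (d_2 - d_1) \;\leq\; c(d_2) \;\leq\; c(d_1).
\]

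From this double inequality the monotonicity claim follows at once: the right-hand side gives $c(d_1) \geq c(d_2)$, and rearranging the left-hand side gives $d_1 + c(d_1) \leq d_2 + c(d_2)$, which is exactly the nonincreasing property of the curve $d \mapsto (d+c(d), c(d))$ in the sense stated. Likewise, the same estimate rewritten as
\[
|c(d_1) - c(d_2)| \leq |d_2 - d_1|
\]
shows that $d \mapsto c(d)$ is $1$-Lipschitz on $\mathbb{R}^+$, and therefore the curve $d \mapsto (d+c(d), c(d))$ is Lipschitz continuous (with Lipschitz constant bounded by $\sqrt{1^2 + 1^2} = \sqrt{2}$ in the Euclidean metric on $\mathbb{R}^2$).

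There is no genuine obstacle here: the only thing to be careful about is the fact that sup/inf of a pointwise inequality preserves the inequality, which is immediate once one notes that $\Gamma$ is common to both minimax problems. No compactness argument, Palais--Smale analysis, or use of the structure of $\mathcal{J}_d$ beyond the explicit formula for $\mathcal{J}_{d_2} - \mathcal{J}_{d_1}$ is needed.
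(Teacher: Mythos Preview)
Your proof is correct and follows essentially the same approach as the paper: both exploit the pointwise comparison $\mathcal{J}_{d_2}(u) - \mathcal{J}_{d_1}(u) = -(d_2-d_1)\int_\Omega (u^+)^p \in [-(d_2-d_1),0]$ on $S$ together with the fact that $\Gamma$ is independent of $d$. Your version is in fact slightly cleaner, since you pass the pointwise sandwich directly through $\sup$ and $\inf$, whereas the paper runs an $\epsilon$-approximation argument choosing a near-optimal path for $c(d')$.
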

\begin{proof}
To prove Lipschitz continuity, take $d < d'$ and observe that $\tilde{\mathcal{J}}_d(u) \geq \tilde{\mathcal{J}}_{d'}(u)$. This implies that $c(d) \geq c(d')$. By definition of $c(d')$, for any $\e > 0$, there exists a curve $\gamma \in \Gamma$ such that $$ \max_{u \in \gamma[-1,1]} \tilde{\mathcal{J}}_{d'}(u) \leq c(d') + \e.$$
Additionally, there exists $u_0 \in S$ such that $\tilde{\mathcal{J}}_d(u_0) = \max_{u \in \gamma[-1,1]} \tilde{\mathcal{J}}_{d}(u)$. Combining, we obtain 
\begin{align*}
0 \leq c(d) - c(d') \leq \max_{u \in \gamma[-1,1]} \tilde{\mathcal{J}}_{d}(u) - \tilde{\mathcal{J}}_{d'} (u_0) + \e \leq \tilde{\mathcal{J}}_d(u_0) - \tilde{\mathcal{J}}_{d'} (u_0) + \e 
\leq  d' - d + \e.
\end{align*}
Since, $\e > 0$ is arbitrary, the map is Lipschitz continuous. 
\end{proof}
\noi As $c(d)$ is decreasing and positive, so the limit of $c(d)$ exists as $d \to \infty$. The exact value of the limit is nothing but the value of the first eigenvalue only.

\begin{theorem}
Let $1 < p < \infty$. Then $\ds \lim_{d \to \infty} c(d) = \lambda_1$.
\end{theorem}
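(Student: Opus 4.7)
The plan is a squeeze argument. The lower bound $\liminf_{d \to \infty} c(d) \geq \lambda_1$ is immediate: by Theorem~\ref{curve} one has $c(d) > \lambda_1$ for every $d \geq 0$, and $c$ is nonincreasing by the previous proposition, so the limit $L := \lim_{d \to \infty} c(d)$ exists with $L \geq \lambda_1$. For the matching upper bound $L \leq \lambda_1$, I will exhibit, for each $\e > 0$, a continuous path $\gamma \in \Gamma$ whose supremum of $\mathcal{J}_d$ does not exceed $\lambda_1 + \e$ once $d$ is large enough; via the minimax formula \eqref{minimax_level} this yields $c(d) \leq \lambda_1 + \e$, and sending $\e \to 0^+$ closes the proof.

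Pick a nonnegative $v \in X(\om)$, compactly supported in $\om$, linearly independent of $\phi_1$, with $\|v\|_{L^p}=1$; the degree of concentration of $v$ will be pinned down at the end. Consider the quarter-circle
\[
\gamma(t) := \frac{\sin(\pi t/2)\,\phi_1 + \cos(\pi t/2)\,v}{\bigl\|\sin(\pi t/2)\,\phi_1 + \cos(\pi t/2)\,v\bigr\|_{L^p}}, \qquad t \in [-1,1],
\]
which is well defined in $S$ (the denominator never vanishes since $\phi_1$ and $v$ are not proportional) and joins $\gamma(-1)=-\phi_1$ to $\gamma(1)=\phi_1$. Set $a(t) := \int_\om (\gamma(t)^+)^p$. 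On $[0,1]$ the numerator of $\gamma$ is pointwise nonnegative, hence $a \equiv 1$ and $\mathcal{J}_d(\gamma(t)) = \mathcal{J}_0(\gamma(t)) - d$, which is pushed arbitrarily low for large $d$. On $[-1,0]$ a direct pointwise inspection shows that the numerator is nonpositive exactly on $\{v/\phi_1 \leq |\tan(\pi t/2)|\}$; writing $M_v := \mathrm{ess\,sup}_{\om}(v/\phi_1)$, this produces a threshold $t^*(v) = -\tfrac{2}{\pi}\arctan(M_v) \in (-1,0)$ such that $a \equiv 0$ on $[-1, t^*(v)]$ and $a > 0$ on $(t^*(v), 0]$. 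The more concentrated $v$ is, the larger $M_v$ and the closer $t^*(v)$ is to $-1$.

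Continuity of $\mathcal{J}_0 : X(\om) \to \real$, together with $\gamma(t) \to -\phi_1$ in $X(\om)$ as $t \to -1$, produces $\tau = \tau(\e, v) > 0$ such that $\mathcal{J}_0(\gamma(t)) \leq \lambda_1 + \e$ on $[-1, -1+\tau]$; combined with $\mathcal{J}_d \leq \mathcal{J}_0$ this controls $\mathcal{J}_d$ uniformly in $d \geq 0$ on that piece. If $v$ is concentrated enough that $t^*(v) < -1 + \tau$, compactness forces $a_0 := \min_{[-1+\tau, 1]} a > 0$, and with $M := \max_{[-1,1]} \mathcal{J}_0(\gamma) < \infty$ one obtains $\mathcal{J}_d(\gamma(t)) = \mathcal{J}_0(\gamma(t)) - d\,a(t) \leq M - d\,a_0 \leq \lambda_1 + \e$ on $[-1+\tau, 1]$ as soon as $d \geq (M - \lambda_1 - \e)/a_0$. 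The main obstacle is the compatibility $t^*(v) + 1 < \tau$: both quantities shrink as $v$ concentrates, and $\tau$ in particular depends on $\|v\|_{X(\om)}$ through the $X$-modulus of continuity of $\mathcal{J}_0$ along $\gamma$. A direct scaling with $v$ a normalised smooth bump on $B(x_0, r) \subset \om$ gives $\|v\|_{X(\om)} \asymp r^{-1}$, hence $\tau \asymp r$, while $\|v\|_\infty \asymp r^{-N/p}$ forces $t^*(v) + 1 \asymp r^{N/p}$; since $p < N$, one has $r^{N/p} = o(r)$ as $r \to 0^+$, so the inequality $t^*(v) + 1 < \tau$ becomes valid for all $r$ small enough in terms of $\e$ and the data of the problem, completing the argument.
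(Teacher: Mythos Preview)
Your approach is quite different from the paper's. The paper follows the original argument of Cuesta \cite{Cuesta1999fucik}, Prop.~4.4: one fixes once and for all a nonnegative $\phi\in X(\om)$ with $\mathrm{ess\,sup}_{\om}(\phi/\phi_1)=+\infty$ (this is Lemma~4.3 of \cite{Cuesta1999fucik}), builds the \emph{single} path $\gamma(t)=(t\phi_1+(1-|t|)\phi)/\|t\phi_1+(1-|t|)\phi\|_p$, and argues by contradiction. If $c(d)\geq\lambda_1+\delta$ for all $d$, the maximiser $t_d\in[-1,1]$ of $\tilde{\mathcal J}_d\circ\gamma$ lies in a compact interval, so along a subsequence $t_d\to\bar t$; the lower bound then forces $\int_\Omega\bigl((t_d\phi_1+(1-|t_d|)\phi)^+\bigr)^p\to 0$, hence $\bar t\,\phi_1+(1-|\bar t|)\phi\leq 0$ a.e., which by the unboundedness of $\phi/\phi_1$ forces $\bar t=-1$. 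Passing to the limit in the inequality yields $\lambda_1\geq\lambda_1+\delta$. This is entirely soft: no quantitative scaling, and no $\e$--dependent choice of test function.

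Your constructive route via concentrated bumps is in principle workable, but the closing paragraph contains a real gap. You correctly isolate the competition between $t^*(v)+1$ and $\tau$, yet the implication ``$\|v\|_{X(\om)}\asymp r^{-1}$, hence $\tau\asymp r$'' is unjustified as written: you have only invoked abstract continuity of $\mathcal J_0$, which gives \emph{some} $\tau(v)>0$ with no control on its size. To obtain $\tau\geq c_\e r$ you would need (i) the bound $\|\gamma(t)+\phi_1\|_X\leq C(t+1)r^{-1}$ near $t=-1$ (which requires tracking the normalising $L^p$--denominator as well), and (ii) the observation that for $t+1\leq r$ the whole segment from $-\phi_1$ to $\gamma(t)$ stays in a fixed $X$--ball independent of $r$, so that the local Lipschitz constant of the $C^1$, $p$--homogeneous functional $\mathcal J_0$ is uniformly bounded there. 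Both are true and not difficult, but they are the substance of the step and are absent. The paper's argument sidesteps this scaling issue entirely by taking $\phi$ with $M_\phi=+\infty$, so that the analogue of your threshold sits exactly at $-1$ for a fixed path.
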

\begin{proof}
The proof follows in a similar manner as in Proposition $4.4$ of \cite{Cuesta1999fucik}. For the sake of completeness, we provide the proof below.

\noi On the contrary, assume there exists $\delta > 0$ such that

\begin{equation} \label{eq:maxJd}
    \max_{u \in \, \gamma \in [ -1, 1 ]} \tilde{\mathcal{J}}_d (u) \geq \lambda_1 + \delta
\end{equation}
for all $\gamma \in \Gamma$ and for all $d \geq 0$.
Choose $\phi \in \sobo(\om)$, hence in $X(\om)$, as in Lemma $4.3$ of \cite{Cuesta1999fucik}. Consider the function
\begin{equation*} 
    \gamma(t) = \frac{t \phi_1 + (1 - |t|) \phi}{\| t \phi_1 + (1 - |t|) \phi \|_p}, \quad t \in [-1,1].
\end{equation*}
Since $\tilde{\mathcal{J}}_d$ is continuous, it attains its maximum at some $t = t_d$. Defining $v_{t_d} = t_d \phi_1 + (1 - |t_d|) \phi, $ we obtain the inequality
\begin{equation*} 
    \int_\Omega |\nabla v_{t_d}|^p + \int_\Omega |v_{t_d}|^p - \mu \int_\Omega \frac{|v_{t_d}|^p}{|x|^{p \theta}} - d \int_\Omega (v_{t_d}^+)^p 
    \geq (\lambda_1 + \delta) \int_\Omega |t_d \phi_1 + (1 - |t_d|) \phi|^p.
\end{equation*}
Letting $d \to \infty$, we extract a subsequence such that $t_d \to \bar{t} \in [-1,1]$. This forces $\int_\Omega (v_{t_d}^+)^p \to 0.$ Thus, we obtain
\begin{equation*}
    \int_\Omega \left[ \big(\bar{t} \phi_1 + (1 - |\bar{t}|) \phi \big)^+\right]^p = 0.
\end{equation*}
This implies that
\begin{equation*} 
    \bar{t} \phi_1 + (1 - |\bar{t}|) \phi \leq 0 \quad \text{a.e. in } \Omega.
\end{equation*}
Consequently, we must have $\bar{t} = -1$.
But then \eqref{eq:maxJd} implies that 
\begin{equation*}
    \lambda_1 \int_\Omega |\phi_1|^p = \int_\Omega |\nabla \phi_1|^p +[\phi_1]_s^p - \mu \int_\Omega \frac{|\phi_1|^p}{|x|^{p \theta}} \geq (\lambda_1 + \delta) \int_\Omega |\phi_1|^p.
\end{equation*}
This gives us a contradiction and hence completes the proof.
\end{proof}

\section{Shape Optimization and Regularity}
In this section, we provide proofs of the shape optimization problems of the domains corresponding to the first two eigenvalues. We start with proving one basic property of first eigenvalue of the operator $\mathcal{T}$.
\begin{theorem}
Let $\Omega_1, \om_2 \subset \rnn$ be domains such that $\Omega_1$ is a proper subset of $\om_2$. Then, $\lambda_1(\Omega_2) < \lambda_1(\Omega_1)$.
\end{theorem}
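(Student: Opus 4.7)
The plan is a two-step monotonicity argument: a soft variational extension-by-zero argument for the weak inequality $\la_1(\om_2) \leq \la_1(\om_1)$, followed by a test-function argument that exploits the nonlocal part of $\mathcal{T}$ to upgrade this to strict inequality.

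\textbf{Step 1 (Weak inequality).} Any $u \in X(\om_1)$ is already zero a.e. outside $\om_1$, so its trivial extension lies in $X(\om_2)$, preserves the constraint $\int |u|^p = 1$, and leaves all three terms in the Rayleigh quotient \eqref{first_eigenvalue} unchanged (the gradient and fractional seminorms depend only on $u$ on $\rnn$, and the Hardy integrand is supported in $\om_1$). Taking infimum over this larger admissible set gives $\la_1(\om_2) \leq \la_1(\om_1)$.

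\textbf{Step 2 (Strict inequality).} Suppose for contradiction that $\la_1(\om_2) = \la_1(\om_1)$. Let $\phi_1 > 0$ be the first eigenfunction on $\om_1$ normalized so that $\int_{\om_1} \phi_1^p = 1$, and let $\tilde{\phi}_1 \in X(\om_2)$ denote its extension by zero to $\rnn$. By Step 1, $\tilde{\phi}_1$ attains the infimum in \eqref{first_eigenvalue} on $X(\om_2)$, and therefore satisfies the Euler--Lagrange equation
\[
\mathcal{T}(\tilde{\phi}_1) = \la_1(\om_2) \, \tilde{\phi}_1^{p-1} \quad \mbox{weakly in } \om_2.
\]
Since $\om_1 \subsetneq \om_2$ with both open, I may pick a ball $B \subset \om_2 \setminus \overline{\om_1}$ and a nontrivial nonnegative test function $\psi \in C_c^{\infty}(B)$. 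On $\operatorname{supp} \psi \subset B$ one has $\tilde{\phi}_1 \equiv 0$ and $\grad \tilde{\phi}_1 \equiv 0$, so the local $p$-Laplacian term, the Hardy term, and the right-hand side all vanish upon testing with $\psi$. Splitting the fractional term over the four regions $B \times B$, $B \times B^c$, $B^c \times B$, $B^c \times B^c$ and using symmetry, the only surviving contribution is
\[
-2 \iint_{B \times \om_1} \frac{\tilde{\phi}_1(y)^{p-1} \, \psi(x)}{|x-y|^{N+ps}} \, dx \, dy,
\]
which is strictly negative because $\tilde{\phi}_1 > 0$ on $\om_1$ and $\psi \geq 0$, $\psi \not\equiv 0$ on $B$. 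But the weak formulation forces this expression to equal zero, a contradiction. Hence $\la_1(\om_2) < \la_1(\om_1)$.

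\textbf{Main obstacle.} Conceptually, the argument is short once one notices that the nonlocal operator $\pfrac$ \emph{sees} the positive mass of $\tilde{\phi}_1$ inside $\om_1$ even from test points sitting in the exterior region $\om_2 \setminus \overline{\om_1}$; this is precisely what produces strict monotonicity for free, in contrast to the purely local case, where one typically invokes the strong maximum principle or a Hopf-type boundary lemma. The only technical subtlety is that the inclusion must be ``effective'' in the sense that $\om_2 \setminus \overline{\om_1}$ has nonempty interior, which is automatic when $\om_1, \om_2$ are domains differing by more than a set of zero $(1,p)$-capacity; this is implicit in the statement.
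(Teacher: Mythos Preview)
Your proof is correct, and Step~1 matches the paper exactly. For the strict inequality, however, the paper takes a different and shorter route: having noted that equality would make $\tilde{\phi}_1$ a first eigenfunction on $\Omega_2$, it simply invokes the earlier result that any first eigenfunction is strictly positive a.e.\ in its domain, which is incompatible with $|\{\tilde{\phi}_1=0\}|\geq |\Omega_2\setminus\Omega_1|>0$. Your argument instead tests the Euler--Lagrange equation with a bump function supported in $\Omega_2\setminus\overline{\Omega_1}$ and extracts a strictly negative nonlocal contribution directly. The trade-offs: the paper's approach is slightly more robust in that it only needs $|\Omega_2\setminus\Omega_1|>0$ rather than nonempty interior, but it leans on the weak Harnack inequality used to prove positivity of $\phi_1$; your approach is more self-contained (you really only need $\phi_1\geq 0$, $\phi_1\not\equiv 0$, so even the strict positivity on $\Omega_1$ is unnecessary) and has the conceptual virtue of isolating the nonlocal term $\pfrac$ as the mechanism producing strict monotonicity, at the mild cost of the interior hypothesis you already flag.
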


\begin{proof}
Let $\phi_1 \in X(\Omega_1)$ be an eigenfunction associated to $\lambda_1(\Omega_1)$, and define $\tilde{\phi_1}$ as the function obtained by extending $\phi_1$ by 0 in $\Omega_2 \setminus \Omega_1$. Then, $\tilde{\phi_1} \in X(\Omega_2)$, and
\[
\int_{\Omega_2}  |\tilde{\phi_1}|^p \, dx = \int_{\Omega_1}  |\phi_1|^p \, dx > 0.
\]
Using $\frac{\tilde{\phi_1}}{\left(\int_{\Omega_2}  |\tilde{\phi_1}|^p \right)^{1/p}} $ as an admissible function for $\lambda_1(\Omega_2)$, we obtain
\begin{align*}
\lambda_1(\Omega_2) & \leq   \frac{\int_{\om_2} |\grad \tilde{\phi_1}|^p + \rnnn \frac{|\tilde{\phi_1}(x) - \tilde{\phi_1}(y)|^p}{|x - y|^{N + ps}} \, dx \, dy - \mu \int_{\Omega_2} \frac{|\tilde{\phi_1}|^p}{|x|^{p\theta}} \, dx}{\int_{\Omega_2}  |\tilde{\phi_1}|^p \, dx} \\
& =  \frac{\int_{\om_1} |\grad \phi_1|^p + \rnnn \frac{|\phi_1(x) - \phi_1(y)|^p}{|x - y|^{N + ps}} \, dx \, dy - \mu \int_{\Omega_1} \frac{|\phi_1|^p}{|x|^{p\theta}} \, dx}{\int_{\Omega_1}  |\phi_1|^p \, dx} \\
& =  \lambda_1(\Omega_1).
\end{align*}
The equality holds only if $\tilde{\phi_1}$ is an eigenfunction associated with $\lambda_1(\Omega_2)$, but this is impossible because $|\tilde{\phi_1} = 0| > 0$ leads to a contradiction. Hence, $~\lambda_1(\Omega_2) < \lambda_1(\Omega_1)$.
\end{proof}

\noi Finally, we give the answer to the optimization problem asked in the setup of the operator $\mathcal{T}$, which says that in the class of all domains with fixed volume $c$, the ball has the smallest first eigenvalue.
\begin{theorem}[Faber-Krahn inequality]
    Let $1 < p < N$, $c$ be a positive real number, and $B$ be a ball with Lebesgue measure $c$ in $\rnn$. Then $$ \la_1(B) = \inf \{ \la_1(\om) : \om \mbox{ be a domain in } \rnn, |\om| = c \} .$$
\end{theorem}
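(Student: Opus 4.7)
The plan is to establish $\lambda_1(B) \leq \lambda_1(\Omega)$ for every admissible $\Omega$ via Schwarz symmetrization applied to the first eigenfunction. Since $B$ is itself admissible, the reverse inequality is automatic, so the infimum is attained at $B$.

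First I would fix a domain $\Omega$ with $|\Omega|=c$ and take the normalized nonnegative first eigenfunction $\phi_1 \in X(\Omega)$, whose existence and positivity were established in Section $4$. Let $\phi_1^{*}$ denote its Schwarz symmetric decreasing rearrangement. Since $\phi_1$ vanishes outside $\Omega$ and $|\Omega|=|B|=c$, the rearrangement $\phi_1^{*}$ is supported in $B$, and equimeasurability gives $\int_{B}|\phi_1^{*}|^p = \int_{\Omega}|\phi_1|^p = 1$. A standard argument combining the $W^{1,p}$-theory of rearrangements with the fractional P\'olya-Szeg\"o inequality of Almgren--Lieb / Frank--Seiringer ensures that $\phi_1^{*} \in X(B)$.

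The heart of the proof is to compare each piece of the functional $\mathcal{J}$ before and after symmetrization. I would invoke three well-known rearrangement inequalities: (a) the classical P\'olya-Szeg\"o inequality $\int_B |\nabla \phi_1^{*}|^p \leq \int_\Omega |\nabla \phi_1|^p$; (b) its fractional counterpart $[\phi_1^{*}]_{s,p}^p \leq [\phi_1]_{s,p}^p$; and (c) the Hardy-Littlewood rearrangement inequality for $|\phi_1|^p$ paired with the radial decreasing weight $|x|^{-p\theta}$, which gives $\int_\Omega |\phi_1|^p/|x|^{p\theta} \leq \int_B |\phi_1^{*}|^p/|x|^{p\theta}$. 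Because the Hardy term enters $\mathcal{J}$ with a negative sign, inequality (c) aligns with (a) and (b), so all three combine to decrease the numerator $\int |\nabla u|^p + [u]_{s,p}^p - \mu \int |u|^p/|x|^{p\theta}$ under symmetrization. Together with $\|\phi_1^{*}\|_p^p = 1$, this yields $\mathcal{J}(\phi_1^{*}) \leq \mathcal{J}(\phi_1) = \lambda_1(\Omega)$.

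Finally, using $\phi_1^{*}$ as a competitor in the variational characterization \eqref{first_eigenvalue} on $B$ gives $\lambda_1(B) \leq \mathcal{J}(\phi_1^{*}) \leq \lambda_1(\Omega)$, and taking the infimum over admissible $\Omega$ concludes the argument. The main obstacle I expect is justifying the fractional P\'olya-Szeg\"o step (b), which is nontrivial and ultimately rests on the Riesz rearrangement inequality (as in Almgren--Lieb or Frank--Seiringer); the local and Hardy--Littlewood comparisons in (a) and (c) are classical. A minor technical point is to verify carefully that $\phi_1 \in X(\Omega)$ indeed rearranges to a function $\phi_1^{*} \in X(B)$ with the correct zero extension outside $B$, so that it is admissible in \eqref{first_eigenvalue}.
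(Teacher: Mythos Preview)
Your proposal is correct and follows essentially the same route as the paper: Schwarz-symmetrize the first eigenfunction, then combine the classical P\'olya--Szeg\H{o} inequality for the gradient term, the fractional P\'olya--Szeg\H{o} inequality (via Almgren--Lieb/Riesz rearrangement) for the nonlocal term, and the Hardy--Littlewood rearrangement inequality for the Hardy term, using equimeasurability for the $L^p$-normalization. The paper merely fills in more detail on step~(b), carrying out the fractional comparison through a truncation of the kernel, a smooth approximation of $\phi_1$, and a Fatou/dominated/monotone convergence passage to the limit; your identification of this as the main technical point is exactly right.
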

\begin{proof}
    Let $\om$ be a domain of Lebesgue measure $c$ and $\om^*$ be the ball of the same measure centred at $0$. Additionally, take $\phi_1$ be the eigenfunction corresponding to the first eigenvalue $\la_1(\om)$ and $\phi_1^*$ be its corresponding Schwarz Symmetrization. \\
\noi Now, take $W_k(x) = \frac{\mathds{1}_{\{|z|>1/k\}}(x)}{|x|^{N +ps}} $ where $\mathds{1}_E$ denotes the characteristic function of a set $E$. Also, there exists a sequence $\psi_m \in C_c^{\infty}(\om)$ with $\psi_m \geq 0$ such that $\lv \psi_m - \phi_1\rv_{L^p(\om)} \to 0$ as $m \to \infty$. Theorem $3.5$ of \cite{lieb2001analysis} implies that $\lv \psi^*_m - \phi_1^*\rv_{L^p(\om)} \to 0$ as $k \to \infty$. Now, Corollary $2.3$ of \cite{almgren_symmetric_decreasing} implies that
\begin{equation}\label{eq:faber1}
    \rnnn \frac{ \mathds{1}_{\{ |z| > 1/k \} }(x-y)|\psi_m^*(x) -\psi_m^*(y)|^p}{|x-y|^{N+ps}}  \leq  \rnnn \frac{ \mathds{1}_{\{ |z| > 1/k \} }(x-y)|\psi_m(x) -\psi_m(y)|^p}{|x-y|^{N+ps}} .
\end{equation}
Since $|a +b |^p \leq 2^{p-1}(|a|^p + |b|^p)$, using Fubini's Theorem and change of variable, we obtain
\begin{equation*}
    \rnnn \frac{ \mathds{1}_{\{ |z| > 1/k \} }(x-y)|\psi_m(x) -\psi_m(y)|^p}{|x-y|^{N+ps}} dy~ dx \leq 2^{p} \int_{\rnn} \frac{\mathds{1}_{\{ |z| > 1/k \} }(y)}{|y|^{N+ps}} dy \int_{\rnn} |\psi_m(x)|^pdx. 
\end{equation*}
\noi Applying Fatou's Lemma and Dominated Convergence Theorem in \eqref{eq:faber1}, we deduce
\begin{equation*}
     \rnnn \frac{ \mathds{1}_{\{ |z| > 1/k \} }(x-y)|\phi_1^*(x) -\phi_1^*(y)|^p}{|x-y|^{N+ps}} \leq  \rnnn \frac{ \mathds{1}_{\{ |z| > 1/k \} }(x-y)|\phi_1(x) -\phi_1(y)|^p}{|x-y|^{N+ps}} .
\end{equation*}
Finally, using the Monotone Convergence Theorem, we have
\begin{equation}\label{eq:faber2}
    \rnnn \frac{ |\phi_1^*(x) -\phi_1^*(y)|^p}{|x-y|^{N+ps}} dy~ dx \leq  \rnnn \frac{ |\phi_1(x) -\phi_1(y)|^p}{|x-y|^{N+ps}} dy~ dx.
\end{equation}
Also, using the Rearrangement inequality (see Theorem $3.4$ and property $(v)$ in section $3.3$ of \cite{lieb2001analysis}), we obtain
\begin{equation}\label{eq:faber3}
\int_{\om} \frac{|\phi_1|^p}{|x|^p}~dx \leq \int_{\om^*} (|\phi_1|^p)^* \left(\frac{1}{|x|^p}\right)^* dx = \int_{\om^*} \frac{(|\phi_1|^p)^*}{|x|^p} dx = \int_{\om^*} \frac{(\phi_1^*)^p}{|x|^p} dx
\end{equation}
\noi Next, as we know that $\int_{\om} \phi_1^p = \int_{\om^*} (\phi_1^*)^p$. Then, using P\`{o}lya's inequality (see Theorem $2.1.3$ of \cite{henrot_extremum_problems}), \eqref{eq:faber2}, and \eqref{eq:faber3}, we get  
\begin{align*}
    \la_1(\om^*) & \leq \frac{\int_{\om^*} |\grad \phi_1^*|^p dx + \rnnn \frac{|\phi_1^*(x) - \phi_1^*(y)|^p}{|x-y|^{N+ps}}~dxdy - \mu \int_{\om^*} \frac{|\phi_1^*|^p}{|x|^p}~dx  }{\int_{\om^*} (\phi_1^*)^p ~dx}\\
    & \leq \frac{\int_{\om} |\grad \phi_1|^p dx + \rnnn \frac{|\phi_1(x) - \phi_1(y)|^p}{|x-y|^{N+ps}}~dxdy - \mu \int_{\om} \frac{|\phi_1|^p}{|x|^p}~dx  }{\int_{\om} (\phi_1)^p ~dx} = \la_1(\om).
\end{align*}
This proves our assertion. 
\end{proof}

We start this section by obtaining some regularity estimates of the eigenfunctions, which are essential in proving further properties of the spectrum of the operator $\mathcal{T}$.
\begin{theorem}\label{regularity_of_eigenfunctions}
Let $1 < p < N$ and $\phi$ be an eigenfunction associated with the eigenvalue $\la$ of the operator $\mathcal{T}$. Then $\phi \in L^{\infty}_{loc}(\om\setminus\{ 0 \})$. Furthermore, for $p \geq 2$ there exists some $\delta > 0$ such that $\phi \in C_{loc}^{\delta}(\om\setminus \{ 0 \})$. In particular, $\phi$ is continuous on $\om \setminus \{ 0 \}$. 
\end{theorem}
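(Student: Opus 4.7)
The plan is to first establish local boundedness via a Moser iteration argument, and then invoke existing H\"older regularity theory for mixed local-nonlocal $p$-Laplacian equations with bounded right-hand side to upgrade to local H\"older continuity when $p \geq 2$. The key observation is that the eigenfunction $\phi$ satisfies
\[
-\plap \phi + \pfrac \phi = V(x)\, |\phi|^{p-2}\phi \quad \text{weakly in } \om, \qquad V(x) := \la + \frac{\mu}{|x|^{p\theta}},
\]
and that on every compact $K \subset \om \setminus \{0\}$ the singular potential $V$ is uniformly bounded. Thus, away from the origin, the problem reduces to a mixed $p$-Laplace equation with a bounded source, and the Hardy term no longer plays an obstructive role.

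For local boundedness, fix $x_0 \in \om \setminus \{0\}$ and $r > 0$ small enough that $\overline{B_{4r}(x_0)} \subset \om \setminus \{0\}$, and pick a standard cutoff $\eta \in C_c^\infty(B_{2r}(x_0))$ with $\eta \equiv 1$ on $B_r(x_0)$ and $|\grad \eta| \leq C/r$. I would test the equation against functions of the form $\varphi = \eta^p\, F(\phi)\, \phi_M^{\gamma-1}$, where $\phi_M = \min\{|\phi|, M\}$, $\gamma \geq 1$, and the components $\phi^+,\phi^-$ are handled separately. The local $p$-Laplacian term, after absorbing the $|\grad \eta|$ factors via Young's inequality, produces a positive contribution controlling $\int_{\om} \eta^p |\grad(|\phi|\, \phi_M^{(\gamma-1)/p})|^p\, dx$. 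The fractional term is estimated below through \eqref{fractional_minimum_inequality} by a local Gagliardo seminorm of $\phi_M^{(\gamma + p - 1)/p}$, while the nonlocal tail contributions (pairs $(x,y)$ with $x \in B_{2r}(x_0)$ and $y \notin B_{2r}(x_0)$) are bounded by the finite quantity $\mathrm{Tail}(\phi; x_0, 2r)$, which is controlled because $\phi \in X(\om)$. The right-hand side contributes $\lv V \rv_{L^\infty(B_{2r}(x_0))} \int_{\om} \eta^p |\phi|^{p-1} \phi_M^\gamma\, dx$. Applying the Sobolev embedding on the left, sending $M \to \infty$ by monotone convergence, and iterating along a geometric sequence $\gamma_k \to \infty$ in the standard Moser fashion then yields an estimate of the form
\[
\lv \phi \rv_{L^\infty(B_{r/2}(x_0))} \leq C\Bigl( \lv \phi \rv_{L^p(B_{2r}(x_0))} + \mathrm{Tail}(\phi; x_0, 2r) \Bigr),
\]
which establishes $\phi \in L^\infty_{loc}(\om \setminus \{0\})$.

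For the H\"older statement when $p \geq 2$, once local boundedness is in hand the source $V(x)|\phi|^{p-2}\phi$ is itself in $L^\infty_{loc}(\om \setminus \{0\})$, so $\phi$ is a bounded local weak solution of a mixed local-nonlocal $p$-Laplace equation with bounded right-hand side. The corresponding local H\"older regularity for such mixed operators in the super-quadratic regime is available from \cite{garain2022regularity} (already used earlier in the paper), which produces some $\delta \in (0,1)$ with $\phi \in C^\delta_{loc}(\om \setminus \{0\})$, and in particular $\phi$ is continuous on $\om \setminus \{0\}$.

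The main technical obstacle will be the Moser iteration itself: tracking the precise dependence of the constants on the exponent $\gamma$, coupling the local and nonlocal estimates so that \eqref{fractional_minimum_inequality} and \eqref{simon_inequality} generate compatible powers of $|\phi|$ on the two sides, and keeping the nonlocal tail uniformly bounded throughout the iteration. Once this machinery is set up, the remainder reduces to quoting the H\"older theory for the mixed operator with bounded data.
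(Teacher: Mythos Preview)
Your outline is correct and would lead to the result, but it takes a more elaborate route than the paper. The paper's key simplification in the Moser step is that the nonlocal term can be \emph{discarded entirely}: testing the equation with $v=\zeta^p\phi_+^{\beta-(p-1)}$ (no truncation at level $M$), the authors verify case by case that
\[
F(\phi(x)-\phi(y))\,(v(x)-v(y))\geq 0
\]
for every pair $(x,y)$ (this is their display \eqref{local_bdd_frac_non_neg}), so the fractional integral is simply dropped from the left side and the iteration proceeds on the local $p$-Laplacian alone. Consequently there are no tail terms, no Gagliardo seminorm bookkeeping, and no truncation/limit $M\to\infty$; the final bound is just $\sup_{B_r}\phi_+\leq C\lv\phi_+\rv_{L^p(B_R)}$ without the $\mathrm{Tail}$ contribution you anticipate. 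Your approach, by contrast, keeps the nonlocal term active and controls it via \eqref{fractional_minimum_inequality} and a tail estimate---this is the standard De~Giorgi--Nash--Moser machinery for mixed operators and is more robust (it would survive, e.g., a sign-indefinite lower-order term), but here it is more work than necessary.

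For the H\"older step you invoke \cite{garain2022regularity}; the paper instead cites Theorem~1.4 of \cite{garain_higher_Holder_regularity}. Either reference does the job once local boundedness is in hand.
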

\begin{proof}
Choose a ball $B_R(x_0)$ such that $d(B_R(x_0),0) \geq \rho$. Then for all $v$ such that $\phi v \geq 0$ with $Supp(v) \subset B_R(x_0)$, we obtain 
\begin{align}\label{local_bdd_test_eqn}
\int_{\om} |\grad \phi|^{p-2} \grad \phi \grad v + \rnnn \frac{F(\phi(x) - \phi(y))(v(x) - v(y))}{|x - y|^{N+ps}} & = \mu \int_{\om} \frac{|\phi|^{p-2} \phi v}{|x|^{p \theta}} + \la \int_{\om} |\phi|^{p-2} \phi v \notag \\
& \leq \la_{\rho} \int_{\om} |\phi|^{p-2}\phi v,
\end{align}
where $\la_{\rho} = \frac{\mu}{\rho^{p \theta}} + \la $.
\noi Taking $v(x) =  \zeta^p(x) \phi_+^{\beta - (p-1)}(x)$ as the test function with $\al = \beta - (p-1)$ and $\zeta \in C_c^{1}(\om)$ such that $0 \leq \zeta \leq 1$, $\zeta \equiv 1$ in $B_{r^*}(x_0)$, $Supp(\zeta) \subset B_{R_*}(x_0)$ with $r^* < R^* \leq R$ and $|\grad \zeta | \leq \frac{1}{R^*-r^*}$. We have
\begin{equation}\label{local_bdd_test_derivative}
    \grad v = p \zeta^{p-1} \phi_+^{\al} \grad \zeta + \zeta^p \al \phi_+^{\al - 1} \grad \phi_+.
\end{equation}
Also, using the inequality \eqref{fractional_minimum_inequality}, we have 
\begin{equation}\label{local_bdd_frac_non_neg}
\begin{split}
|\phi(x) &- \phi(y)|^{p-2}(\phi(x) - \phi(y))(v(x) - v(y)) \\
& = \begin{cases}
    0 & \mbox{ if } \phi(x) < 0, \phi(y) < 0,\\
    |\phi^-(x) + \phi^+(y)|^{p-2}(\phi^-(x) + \phi^+(y))v(y) & \mbox{ if }\phi(x) < 0, \phi(y) \geq 0,\\
    |\phi^+(x) +\phi^+(y)|^{p-2} (\phi^+(x) + \phi^+(y)) v(x) & \mbox{ if } \phi(x) \geq 0, \phi(y) < 0,\\
    F(\phi(x) - \phi(y))( (\zeta^{p/\al} \phi_+)^{\al}(x) - (\zeta^{p/\al} \phi_+)^{\al}(y)) & \mbox{ if } \phi(x) \geq 0, \phi(y) \geq 0,
\end{cases}
\\ & \geq 0.
\end{split}
\end{equation}

\noi Using \eqref{local_bdd_test_derivative} and \eqref{local_bdd_frac_non_neg} in \eqref{local_bdd_test_eqn}, we deduce 
\begin{align*}
\al \int_{\om} |\grad \phi_{+}|^p \zeta^p \phi_{+}^{\al-1} \leq - p \int_{\om} \zeta^{p-1} \phi_{+}^{\al} \langle |\grad \phi_{+}|^{p-2} \grad \phi_{+}, \grad \zeta\rangle + \la_{\rho} \int_{\om} \phi_{+}^{p-1+\al}  \zeta^p.
\end{align*}
As $ \al = \frac{(\al - 1)(p-1)}{p} + \frac{\al + p -1}{p}$, the Young's inequality yields
\begin{align*}
\al \int_{\om} |\grad \phi_{+}|^p \zeta^p \phi_{+}^{\al-1} &\leq p \int_{\om} \left( \zeta^{p-1} |\grad \phi_{+} |^{p-1} \phi_{+}^{\frac{(\al - 1)(p-1)}{p}}\right) \left( \phi_{+}^{\frac{\al + p -1}{p}} |\grad \zeta|\right) + \la_{\rho} \int_{\om} \zeta^p \phi_{+}^{p-1+\al}\\
&\leq  p \int_{\om} \left[ \frac{\al \e^{\frac{p}{p-1}} \zeta^p |\grad \phi_{+}|^{p} \phi_{+} ^{\al -1}}{p/(p-1)} + \frac{\phi_{+}^{\al + p -1} |\grad \zeta|^p}{p \e^p \al^{p-1} }\right] + \la_{\rho} \int_{\om} \zeta^p \phi_{+}^{p-1+\al}
\end{align*}
for some $\e > 0$. This implies
\begin{equation*}
    \al \left( 1 - (p-1) \e^{\frac{p}{p-1}}\right) \int_{\om} |\grad \phi_{+}|^{p} \zeta^p \phi_{+}^{\al - 1} \leq \frac{1}{\e^p \al^{p-1}} \int_{\om} \phi_{+}^{\al + p -1} |\grad \zeta |^p + \la_{\rho} \int_{\om} \zeta^p \phi_{+}^{p-1+\al}.
\end{equation*}
Choosing $\e$ such that $(p-1) \e^{\frac{p}{p-1}} \leq \frac{1}{2}$, we have
\begin{equation*}
    \int_{\om} |\grad \phi_{+}|^{p} \zeta^p \phi_{+}^{\al - 1} \leq \frac{2}{\e^p \al^{p}} \int_{\om} \phi_{+}^{\al + p -1} |\grad \zeta |^p + \frac{2 \la_{\rho}}{\al} \int_{\om} \zeta^p \phi_{+}^{p-1+\al},
\end{equation*}
that can be rewritten in terms of $\beta$ as 
\begin{equation*}
    \int_{\om} \zeta^p |\grad \phi_{+}^{\frac{\beta}{p}}|^p \leq \frac{2 \beta^p}{(\e p(\beta - p +1))^p} \int_{\om} \phi_{+}^{\beta} |\grad \zeta|^p + \frac{2 \la_{\rho} \beta^{p}}{(\beta - p +1) p^p} \int_{\om} \zeta^p \phi_{+}^{\beta}.
\end{equation*}
Thus, Sobolev inequality gives 
\begin{align*}
    S \left( \int_{\om} |\zeta \phi_{+}^{\beta/p}|^{p^*} \right)^{\frac{p}{p^*}} & \leq \int_{\om} | \grad ( \zeta \phi_{+}^{\beta/p})|^p \\
    & \leq 2^{p-1} \int_{\om} \zeta^p |\grad \phi_{+}^{\frac{\beta}{p}}|^p + 2^{p-1} \int_{\om} |\grad \zeta|^p |\phi_{+}|^{\beta}\\
    & \leq \left( 1 +  \frac{2 \beta^p}{(\e p(\beta - p +1))^p} \right) 2^{p-1} \int_{\om} \phi_{+}^{\beta} |\grad \zeta|^p + \frac{2^p \la_{\rho} \beta^{p}}{(\beta - p +1) p^p} \int_{\om} \zeta^p \phi_{+}^{\beta}.
\end{align*}
Using the properties of $\zeta$ and $\beta \geq p$, we deduce
\begin{equation*}
\begin{split}
\lv \phi_{+} \rv_{L^{\beta p^*/p}(B_{r^*})} & \leq (S^{-1})^{1/\beta} \beta^{p/\beta}\left[  \left( \frac{1}{\beta^p} +  \frac{2 }{(\e p(\beta - p +1))^p} \right) \frac{2^{p-1}}{(R^* -  r^*)^p}  \right. \\
& \quad \left. + \frac{2^p \la_{\rho}}{(\beta - p +1) p^p} \right]^{1/\beta} \lv \phi_{+} \rv_{L^{\beta}(B_{R^*})}\\
&\leq (S^{-1})^{1/\beta} \beta^{p/\beta}\left[  \left( \frac{1}{p^p} +  \frac{2 }{(\e p)^p} \right) \frac{2^{p-1}}{(R^* - r^*)^p} + \frac{2^p \la_{\rho}}{p^p} \right]^{1/\beta} \lv \phi_{+} \rv_{L^{\beta}(B_{R^*})}.
\end{split}
\end{equation*}
Taking $c_0 = \left( \frac{1}{p^p} +  \frac{2 }{(\e p)^p} \right), \chi = \frac{p^*}{p}, \gamma_i = \beta_0 \chi^i$ with $\beta_0 = p$, and $r_j = r + 2^{-j}(R -r)$ for $j \in \ntrl \cup \{0\}$. Then 
\begin{equation*}
\begin{split}
    \lv \phi_{+} \rv_{L^{\gamma_1}(B_{r_1})} & \leq (S^{-1})^{\frac{1}{\gamma_0}} \gamma_0^{\frac{p}{\gamma_0}}\left[  \frac{2^{p-1}c_0 }{(r_0 - r_1)^p} + \frac{2^p \la_{\rho}}{p^p} \right]^{\frac{1}{\gamma_0}} \lv \phi_{+} \rv_{L^{\gamma_0}(B_{r_0})}\\
    & \leq (2^{p-1} S^{-1})^{\frac{1}{\gamma_0}}\gamma_0^{\frac{p}{\gamma_0}}\left[  \frac{2^p c_0 }{(R - r)^p} + \frac{2 \la_{\rho}}{p^p} \right]^{\frac{1}{\gamma_0}} \lv \phi_{+} \rv_{L^{\gamma_0}(B_{r_0})}.
\end{split}
\end{equation*}
Thus, we have 
\begin{equation*}
    \begin{split}
         &\lv \phi_{+} \rv_{L^{\gamma_2}(B_{r_2})} \leq (2^{p-1} S^{-1})^{\frac{1}{\gamma_1}}\gamma_1^{\frac{p}{\gamma_1}}\left[  \frac{2^{2p} c_0 }{(R - r)^p} + \frac{2 \la_{\rho}}{p^p} \right]^{1/\gamma_1} \lv \phi_{+} \rv_{L^{\gamma_1}(B_{r_1})}\\
         &\leq (2^{p-1} S^{-1})^{\frac{1}{\gamma_1} + \frac{1}{\gamma_0}} \gamma_1^{\frac{p}{\gamma_1}} \gamma_0^{\frac{p}{\gamma_0}} \left[  \frac{2^{2p} c_0 }{(R - r)^p} + \frac{2 \la_{\rho}}{p^p} \right]^{\frac{1}{\gamma_1}} \left[  \frac{2^p c_0 }{(R - r)^p} + \frac{2 \la_{\rho}}{p^p} \right]^{\frac{1}{\gamma_0}} \lv \phi_{+} \rv_{L^{\gamma_0}(B_{r_0})}.
    \end{split}
\end{equation*}
Hence, recursively one obtain
\begin{equation*}
     \lv \phi_{+} \rv_{L^{\gamma_j}(B_{r_j})} \leq (2^{p-1}S^{-1})^{\sum_{i=0}^{j-1}\frac{1}{\gamma_i}} \left( \prod_{i=0}^{j-1} \gamma_i^{\frac{p}{\gamma_i}} \left[ \frac{2^{(i+1)p} c_0 }{(R - r)^p} + \frac{2 \la_{\rho}}{p^p} \right]^{\frac{1}{\gamma_i}} \right) \lv \phi_{+} \rv_{L^{\gamma_0}(B_{r_0})},
\end{equation*}
for all $j \geq 1$. As $2^{(i+1)p} \to \infty $ as $i \to \infty$, there exists $c_0^*$ such that $$\frac{2 \la_{\rho}}{p^p} \leq c_0^*\frac{2^{(i+1)p} c_0 }{(R - r)^p} \mbox{  for all } i \geq 1. $$ Hence, 
\begin{equation}\label{local_bdd_final_iteration}
    \lv \phi_{+} \rv_{L^{\gamma_j}(B_{r_j})} \leq  \left( \frac{S^{-1} 2^{p-1}(1+c_0^*)c_0}{(R-r)^p}\right)^{\sum_{i=0}^{j-1} \frac{1}{\gamma_i}} 2^{\sum_{i=0}^{j-1} \frac{(i+1)p}{\gamma_i}} \left( \prod_{i=0}^{j-1} \gamma_i^{\frac{1}{\gamma_i}}  \right)^p \lv \phi_{+} \rv_{L^{\gamma_0}(B_{r_0})}.
\end{equation}
Since 
\begin{align*}
    \sum_{i =0 }^{j-1} \frac{1}{\gamma_i} &= \frac{1}{\beta_0} \sum_{i=0}^{j-1} \frac{1}{\chi^i} \leq \frac{\chi}{\beta_0(\chi - 1)} = \frac{N}{\beta_0 p}\\
     \sum_{i =0 }^{j-1} \frac{i}{\gamma_i} &= \frac{1}{\beta_0} \sum_{i=0}^{j-1} \frac{i}{\chi^i} \leq \frac{\chi}{\beta_0(1-\chi)^2} = \frac{N^2}{\beta_0 p^2}\\
     \sum_{i=0}^{j-1} \frac{\log(\gamma_i)}{\gamma_i} & = \frac{\log(\beta_0)}{\beta_0} \sum_{i=0}^{j-1} \frac{1}{\chi^i} + \frac{\log(\chi)}{\beta_0} \sum_{i=0}^{j-1}\frac{i}{\chi^i} < \infty.
\end{align*}
Therefore, taking $j \to \infty$ in \eqref{local_bdd_final_iteration}, we get
\begin{equation*}
\sup_{B_{r}} |\phi_{+}| \leq C \lv \phi_{+} \rv_{L^{\beta}(B_R)}.
\end{equation*}
Hence, we have the local boundedness of $\phi_+$. Note that, if $\phi$ is an eigenfunction, then so is $-\phi$. Additionally, $(-\phi)_+ = \phi_-$. Applying the same method, we have local boundedness of $\phi_-$. Hence, $\phi$ is locally bounded. \\
The continuity of solution comes from the Theorem $1.4$ of \cite{garain_higher_Holder_regularity}.
\end{proof}

Now, we provide a variational characterization of the second eigenvalue of mixed interpolated Hardy operator $\mathcal{T}$ and prove the mixed Hong-Krahn-Szeg\"{o} inequality.
\begin{theorem}[Nodal Domains]\label{thm:nodal_domains}
Let $2 \leq p < N$ and  $\la > \la_1(\om)$ be an eigenvalue of the mixed interpolated Hardy operator $\mathcal{T}$ and $\phi_{\la}$ be the associated eigenfunction. Then we have $$\la > \max\{ \la_1(\om^+), \la_1(\om^-)\}$$ where $\om^+ := \{ x \in \om \setminus \{0\}: \phi_{\la}>0\}$ and $\om^- := \{ x \in \om \setminus \{0\} : \phi_{\la}<0\}$.
\end{theorem}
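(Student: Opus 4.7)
The plan is to test the eigenvalue equation for $\phi_\lambda$ against the admissible function $\phi_\lambda^+$ (and symmetrically $-\phi_\lambda^-$), and then compare with the variational characterization \eqref{first_eigenvalue} applied on the nodal domain $\Omega^+$. Since $\lambda > \lambda_1(\Omega)$, Theorem \ref{sign_changing_eigenfunctions} guarantees that $\phi_\lambda$ changes sign, so $|\Omega^\pm| > 0$. The regularity result Theorem \ref{regularity_of_eigenfunctions} (applicable since $p \geq 2$) shows $\phi_\lambda$ is continuous on $\Omega \setminus \{0\}$, so $\Omega^+$ and $\Omega^-$ are open and $\phi_\lambda^+$ vanishes continuously on $\partial \Omega^+ \cap (\Omega \setminus \{0\})$; together with $\phi_\lambda \in X(\Omega)$, this places $\phi_\lambda^+$ in $X(\Omega^+)$, hence eligible as a competitor in the Rayleigh quotient defining $\lambda_1(\Omega^+)$.

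Testing $\mathcal{T}\phi_\lambda = \lambda |\phi_\lambda|^{p-2}\phi_\lambda$ with $\phi_\lambda^+$ and observing that $\phi_\lambda^+$ is supported in $\overline{\Omega^+}$ produces
\begin{align*}
&\int_{\Omega^+} |\nabla \phi_\lambda^+|^p + \rnnn \frac{F(\phi_\lambda(x) - \phi_\lambda(y))\,(\phi_\lambda^+(x) - \phi_\lambda^+(y))}{|x-y|^{N+ps}}\,dx\,dy \\
&\qquad - \mu \int_{\Omega^+}\frac{(\phi_\lambda^+)^p}{|x|^{p\theta}}\,dx = \lambda \int_{\Omega^+} (\phi_\lambda^+)^p\,dx.
\end{align*}
By inequality \eqref{brasco_inequality_positive}, the nonlocal integrand is bounded below by $|\phi_\lambda^+(x) - \phi_\lambda^+(y)|^p$. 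The crucial point is that this bound is \emph{strict} on $(\Omega^+\times \Omega^-)\cup(\Omega^-\times\Omega^+)$: for $x \in \Omega^+, y \in \Omega^-$, a direct computation gives
\[
F(\phi_\lambda(x) - \phi_\lambda(y))(\phi_\lambda^+(x) - \phi_\lambda^+(y)) - |\phi_\lambda^+(x)|^p = \phi_\lambda^+(x)\bigl[(\phi_\lambda^+(x) + \phi_\lambda^-(y))^{p-1} - (\phi_\lambda^+(x))^{p-1}\bigr] > 0,
\]
and the set $(\Omega^+\times \Omega^-)\cup(\Omega^-\times\Omega^+)$ has positive Lebesgue measure $2|\Omega^+||\Omega^-|$ in $\mathbb{R}^{2N}$.

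Consequently,
\[
\int_{\Omega^+} |\nabla \phi_\lambda^+|^p + \rnnn \frac{|\phi_\lambda^+(x)-\phi_\lambda^+(y)|^p}{|x-y|^{N+ps}}\,dx\,dy - \mu \int_{\Omega^+}\frac{(\phi_\lambda^+)^p}{|x|^{p\theta}}\,dx < \lambda \int_{\Omega^+} (\phi_\lambda^+)^p\,dx,
\]
and dividing by $\int_{\Omega^+}(\phi_\lambda^+)^p$ and using $\phi_\lambda^+/\|\phi_\lambda^+\|_{L^p(\Omega^+)}$ as a competitor in \eqref{first_eigenvalue} applied to $\Omega^+$ yields $\lambda_1(\Omega^+) < \lambda$. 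An identical argument with $-\phi_\lambda^-$ as test function, invoking \eqref{brasco_inequality_negative} in place of \eqref{brasco_inequality_positive}, gives $\lambda_1(\Omega^-) < \lambda$. The main subtlety is justifying the admissibility $\phi_\lambda^\pm \in X(\Omega^\pm)$ even though the nodal sets lack regular boundary; this is the step where the continuity of eigenfunctions on $\Omega \setminus \{0\}$ provided by Theorem \ref{regularity_of_eigenfunctions} is essential.
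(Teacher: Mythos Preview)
Your proof is correct and follows essentially the same approach as the paper: test the eigenvalue equation with $\phi_\lambda^+$, invoke \eqref{brasco_inequality_positive} to pass from the nonlocal bilinear term to the seminorm of $\phi_\lambda^+$, and compare with the Rayleigh quotient for $\lambda_1(\Omega^+)$. In fact you supply two details the paper leaves implicit: you explain explicitly why the inequality is \emph{strict} (via the computation on $\Omega^+\times\Omega^-$, which has positive measure since $\phi_\lambda$ changes sign by Theorem~\ref{sign_changing_eigenfunctions}), and you flag the admissibility issue $\phi_\lambda^+\in X(\Omega^+)$, which the paper handles only by noting that $\Omega^\pm$ are open thanks to the continuity from Theorem~\ref{regularity_of_eigenfunctions}.
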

\begin{proof}
As $\phi_{\la}$ is continuous on $\om \setminus \{0\}$ from Theorem \ref{regularity_of_eigenfunctions}, we have $\om_+ $ and $\om_-$ to be open in $\om \setminus \{0\}$. But $\om \setminus \{0\}$ is open in $\om$ and $\la_1(\om) = \la_1(\om \setminus \{0\})$. This allows us to define the eigenvalues $\la_1(\om^+)$ and $\la_1(\om^-)$. Since $\phi_{\la}$ is an eigenfunction, it satisfies 
\small{
\begin{equation*}
    \int_{\om} |\grad \phi_{\la}|^{p-2} \grad \phi_{\la} \grad v + \rnnn \frac{F(\phi_{\la}(x) - \phi_{\la}(y))(v(x) - v(y))}{|x - y|^{N+ps}} - \mu \int_{\om} \frac{|\phi_{\la}|^{p-2} \phi_{\la} v}{|x|^{p \theta}} = \la \int_{\om} |\phi_{\la}|^{p-2} \phi_{\la} v 
\end{equation*}}
for all $ v \in X(\om)$. \\
Taking $v = \phi_{\la}^+$ and using \eqref{brasco_inequality_positive}, we obtain
\begin{align*}
   & \int_{\om^+} |\grad \phi_{\la}^+|^{p}  + \rnnn \frac{|\phi_{\la}^+(x) - \phi_{\la}^+(y)|^{p}}{|x - y|^{N+ps}} - \mu \int_{\om^+} \frac{|\phi_{\la}^+|^{p} }{|x|^{p \theta}}\notag \\
    &< \int_{\om^+} |\grad \phi_{\la}^+|^{p} + \rnnn \frac{F(\phi_{\la}(x) - \phi_{\la}(y))(\phi_{\la}^+(x) - \phi_{\la}^+(y))}{|x - y|^{N+ps}} - \mu \int_{\om^+} \frac{|\phi_{\la}^+|^{p} }{|x|^{p \theta}} 
    = \la \int_{\om^+} | \phi_{\la}^+|^p.
\end{align*}
Next, by definition of $\la_1(\om^+)$, we get 
\begin{align*}
    \la_1(\om^+) \int_{\om^+} |\grad \phi_{\la}^+|^{p}  \leq  \int_{\om^+} |\grad \phi_{\la}^+|^{p}  + \rnnn \frac{|\phi_{\la}^+(x) - \phi_{\la}^+(y)|^{p}}{|x - y|^{N+ps}} - \mu \int_{\om^+} \frac{|\phi_{\la}^+|^{p} }{|x|^{p \theta}} 
    < \la \int_{\om^+} |\grad \phi_{\la}^+|^{p},
\end{align*}
which implies $\la > \la_1(\om^+)$. Following the same reasoning, we also have $\la > \la_1(\om^-)$. Hence $\la > \max\{ \la_1(\om^+), \la_1(\om^-)\}$, which concludes our result.
\end{proof}

\begin{theorem}\label{second_eigenvalue_characterisation}
    Let $2 \leq p < \infty$ and $\om $ be a domain in $\rnn$. Then there exists a number $\la_2(\om)$ such that it satisfies:
    \begin{itemize}
        \item $\la_2$ is an eigenvalue of the operator $\mathcal{T}$
        \item $\la_1(\om) < \la_2(\om)$
        \item if $\la > \la_1(\om)$, then $\la \geq \la_2$.
    \end{itemize}
Moreover, $\la_2(\om)$ has the following characterization:
\begin{equation*}
    \la_2(\om) = \inf_{\gamma \in \Gamma} \max_{u \in \gamma} \mathcal{J}(u),
\end{equation*}
where $\mathcal{J}$ as in \eqref{energy_functional}, $\Gamma = \{ \gamma \in C([-1,1], S): \gamma(-1) = - \phi_1 \mbox{ and } \gamma(1) = \phi_1 \}$ with $\phi_1$ is the first eigenfunction of the operator $\mathcal{T}$.
\end{theorem}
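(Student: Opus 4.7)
The plan is to identify $\la_2(\om)$ with the mountain-pass value $c(0)$ constructed in Section 5. Since the class $\Gamma$ in the statement is exactly the family of admissible paths used in the definition \eqref{minimax_level} with $d=0$, and $\mathcal{J}=\mathcal{J}_0$, the variational formula
\[
\la_2(\om) := c(0) = \inf_{\gamma\in\Gamma}\max_{u\in\gamma([-1,1])}\mathcal{J}(u)
\]
will hold by definition, so the real content lies in verifying the three bullet points.

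For the first two bullets, I would apply Theorem \ref{curve} with $d=0$, which yields $(c(0),c(0))\in \sum_{p,\mu,\theta}$ together with the strict inequality $c(0)>\la_1$. On the diagonal $\alpha=\beta$ the \fucik equation \eqref{pq} reduces to the eigenvalue equation \eqref{spectrum_equation}, because the identity $\alpha(u^+)^{p-1}-\alpha(u^-)^{p-1}=\alpha|u|^{p-2}u$ turns \eqref{pq} into \eqref{spectrum_equation} with $\la=\alpha$. Hence $c(0)$ is an eigenvalue of $\mathcal{T}$ strictly larger than $\la_1$, giving properties (1) and (2).

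For the third bullet, let $\la>\la_1$ be any eigenvalue of $\mathcal{T}$. Then $(\la,\la)\in\sum_{p,\mu,\theta}$ lies on the line $(0,0)+t(1,1)$ with $\la\neq\la_1$. Theorem \ref{first_non_trivial_curve} with $d=0$ asserts that $(c(0),c(0))$ is the first nontrivial intersection of $\sum_{p,\mu,\theta}$ with this diagonal line beyond the trivial point $(\la_1,\la_1)$; in particular, there is no eigenvalue of $\mathcal{T}$ lying strictly between $\la_1$ and $c(0)$. Consequently $\la\geq c(0)=\la_2(\om)$, which gives property (3).

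The main point to verify is that the mountain-pass construction from Section 5 is valid at the endpoint $d=0$: one needs $\phi_1$ and $-\phi_1$ to be a global minimum and a strict local minimum of $\mathcal{J}$ on $S$, both at level $\la_1$, so that Lemma \ref{mountain_pass_variant} applies, together with the (P.-S.) condition for $\tilde{\mathcal{J}}_0$ established in Lemma \ref{ps_condition} and the isolation of $\la_1$ proved in Section 4. These ingredients are already incorporated in the statements of Theorems \ref{curve} and \ref{first_non_trivial_curve}, so no additional argument is required; the only hazard is the verbal identification ``first nontrivial point on the diagonal $\Leftrightarrow$ smallest eigenvalue greater than $\la_1$,'' which is the reduction discussed above.
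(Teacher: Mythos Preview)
Your approach is correct and is essentially the one taken in the paper: define $\la_2(\om)=c(0)$, invoke Theorem \ref{curve} at $d=0$ for bullets (1) and (2), and invoke Theorem \ref{first_non_trivial_curve} at $d=0$ for bullet (3) via the diagonal identification $\alpha(u^+)^{p-1}-\alpha(u^-)^{p-1}=\alpha|u|^{p-2}u$.

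The only addition in the paper's proof that you omit is a short reconciliation with the genus definition of $\la_2$ from Theorem \ref{existence_of_eigenvalues}. Since the symbol $\la_2$ was already introduced there, the paper checks that the two definitions agree: for any $\gamma\in\Gamma$ the set $\gamma([-1,1])\cup(-\gamma([-1,1]))$ is compact, symmetric, of genus $\geq 2$, and $\mathcal{J}$ is even, so the genus $\la_2$ satisfies $\la_2\leq c(0)$; combined with bullet (3) and the fact that the genus $\la_2$ is an eigenvalue strictly above $\la_1$, this forces equality. This step is not needed for the theorem as literally stated, but it is used later in the Hong--Krahn--Szeg\H{o} argument, which appeals to the genus characterization of $\la_2$.
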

\begin{proof}
Take
\begin{equation*}
    \la_* := \inf_{\gamma \in \Gamma} \max_{u \in \gamma[-1,1]} \mathcal{J}(u).
\end{equation*}
Then $\la^*$ is a critical value of functional $\mathcal{J}$, and this can be proved in the same way as $c(d)$ is proved in Theorem \ref{curve}. Clearly, $\la^* > \la_1(\om)$ as $\la_1(\om)$ is the strict minimum value of functional $\mathcal{J}$ achieved only at points $c \phi$ where $c \in \real \setminus \{0 \}$ and $\phi$ is the first eigenfunction.
Observing that $\mathcal{J}$ is even, arguing as in Theorem \ref{first_non_trivial_curve}, we obtain the required properties of $\la^*$.\\
Now, if $\gamma \in \Gamma$ is any curve, then take $\gamma_0 = \{ \gamma \} \cup \{- \gamma\}$. So, $\gamma_0$ is a set with $\gamma^*(\gamma_0) \geq 2$. Since, $\mathcal{J}$ is even, by characterization of second eigenvalue in Theorem \ref{existence_of_eigenvalues} we have $\la_2(\om) \leq \la^*$. But, $\la^*$ is the first eigenvalue after $\la_1(\om)$, so we obtain $\la_2(\om) = \la^*$.
\end{proof}

\begin{theorem}[Mixed Hong-Krahn-Szeg\"{o} inequality]
Let $p \geq 2$, $\om \subset \rnn$ be an open bounded set, and $B$ be any ball of volume $|\om|/2$. Then $$ \la_2(\om) > \la_1(B).$$
Additionally, if $\{ s_k \}$ and $\{ t_k \}$ are two sequences in $\rnn$ with $\displaystyle\lim_{k \to \infty}|s_k - t_k| = \infty$ and $\om_k = B_r(s_k) \cup B_r(t_k)$, then $\displaystyle \lim_{k \to \infty} \la_2(\om_k) = \la_1(B_r)$.
\end{theorem}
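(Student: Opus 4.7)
The argument splits into the strict inequality $\lambda_2(\Omega)>\lambda_1(B)$ and the asymptotic $\lambda_2(\Omega_k)\to\lambda_1(B_r)$.

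For the strict inequality, my plan is to combine three results already established in this section: the nodal-domain bound of Theorem~\ref{thm:nodal_domains}, the Faber--Krahn inequality, and the strict domain monotonicity of $\lambda_1$ proved at the start of this section. Let $\phi$ be an eigenfunction associated with $\lambda_2(\Omega)$ and set $\Omega^{\pm}:=\{x\in\Omega\setminus\{0\}:\pm\phi(x)>0\}$. Theorem~\ref{thm:nodal_domains} yields $\lambda_2(\Omega)>\max\{\lambda_1(\Omega^+),\lambda_1(\Omega^-)\}$. Since $\Omega^+\cap\Omega^-=\emptyset$, at least one of them---say $\Omega^+$---has Lebesgue measure at most $|\Omega|/2=|B|$. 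Letting $\widetilde B$ be the ball centered at the origin with $|\widetilde B|=|\Omega^+|$, Faber--Krahn gives $\lambda_1(\Omega^+)\geq\lambda_1(\widetilde B)$, and since $\widetilde B\subseteq B$ (two concentric origin-centered balls) strict domain monotonicity yields $\lambda_1(\widetilde B)\geq\lambda_1(B)$. Chaining the inequalities then gives $\lambda_2(\Omega)>\lambda_1(B)$.

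For the asymptotic statement, the lower bound $\lambda_2(\Omega_k)>\lambda_1(B_r)$ is immediate from the first part applied to $\Omega_k$, since $|\Omega_k|/2=|B_r|$. For the matching upper bound, I would exploit the min-max characterization of $\lambda_2$ from Theorem~\ref{second_eigenvalue_characterisation} and exhibit a path $\gamma_k\in\Gamma$ on $S(\Omega_k)$ from $-\phi_1(\Omega_k)$ to $\phi_1(\Omega_k)$ whose maximal $\mathcal J$-value tends to $\lambda_1(B_r)$. Let $u_k^s,u_k^t$ be the $L^p$-normalized first eigenfunctions of $B_r(s_k),B_r(t_k)$ extended by $0$ to $\mathbb R^N$; the candidate path concatenates a short homotopy deforming $\phi_1(\Omega_k)$ (which concentrates on the ball with smaller first eigenvalue) into a signed multiple of $u_k^s$, then the sphere-valued arc $\tau\mapsto[(\cos\tau)u_k^s-(\sin\tau)u_k^t]/\|(\cos\tau)u_k^s-(\sin\tau)u_k^t\|_p$ in the spirit of the paths built in Theorem~\ref{first_non_trivial_curve}, and finally a short homotopy onto $-\phi_1(\Omega_k)$.

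The heart of the argument, and the main technical obstacle, is controlling $\mathcal J$ along $\gamma_k$. The local $p$-Laplacian and Hardy contributions split additively over the two disjoint balls and are bounded on each piece by $\lambda_1(B_r(s_k))$ or $\lambda_1(B_r(t_k))$. The delicate part is the fractional cross-term $\iint_{B_r(s_k)\times B_r(t_k)}|u_k^s(x)\pm u_k^t(y)|^p\,|x-y|^{-N-ps}\,dx\,dy$; since $|x-y|\geq|s_k-t_k|-2r\to\infty$ on this region and the $u_k^{s,t}$ are compactly supported with bounded $L^p$-norm, a direct majorization shows this term is $O(|s_k-t_k|^{-(N+ps)})$, hence vanishes in the limit. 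Feeding this decay into the path estimate yields $\max_\tau\mathcal J(\gamma_k(\tau))\leq\max\{\lambda_1(B_r(s_k)),\lambda_1(B_r(t_k))\}+o(1)$, and identifying both of those eigenvalues with $\lambda_1(B_r)+o(1)$---via translation invariance of the non-Hardy part of $\mathcal J$ together with the decay of $|x|^{-p\theta}$ on balls drifting away from the origin---completes the upper bound.
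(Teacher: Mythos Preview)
Your argument for the strict inequality $\lambda_2(\Omega)>\lambda_1(B)$ is essentially the paper's: nodal domains (Theorem~\ref{thm:nodal_domains}) followed by Faber--Krahn and domain monotonicity. No issue there.

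For the asymptotic part your route diverges from the paper's. The paper does \emph{not} use the mountain-pass characterization of Theorem~\ref{second_eigenvalue_characterisation}; instead it invokes the \emph{genus} characterization of $\lambda_2$ from Theorem~\ref{existence_of_eigenvalues} and produces, for each $k$, an odd continuous map $f_k:\mathbb S^1\to S(\Omega_k)$ built from the two first eigenfunctions $\phi_{s_k},\phi_{t_k}$ of the balls. The image $A_k=f_k(\mathbb S^1)$ is compact, symmetric, with $\gamma^*(A_k)\ge 2$, so $\lambda_2(\Omega_k)\le\max_{A_k}\mathcal J$. The cross term is then controlled using the elementary inequality $|a-b|^p\le |a|^p+|b|^p+c_p(|a|^2+|b|^2)^{(p-2)/2}|ab|$ together with the observation that $\Phi_{s_k}(x,y)\Phi_{t_k}(x,y)$ is nonzero only on $B_r(s_k)\times B_r(t_k)\cup B_r(t_k)\times B_r(s_k)$, where $|x-y|\ge |s_k-t_k|-2r$. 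The payoff of the genus approach is that it \emph{never has to touch $\pm\phi_1(\Omega_k)$}.

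This is exactly where your proposal has a gap. Your ``short homotopy deforming $\phi_1(\Omega_k)$ \dots\ into a signed multiple of $u_k^s$'' rests on the claim that $\phi_1(\Omega_k)$ concentrates on one ball. For the operator $\mathcal T$ this is not obvious: the fractional term $(-\Delta_p)^s$ couples the two disjoint balls, so the first eigenfunction of $\Omega_k$ is strictly positive on \emph{both} components and is not simply $u_k^s$ or $u_k^t$. You therefore owe an actual construction of this connecting piece with $\mathcal J$ under control. One way to repair this within your framework is to use the hidden-convexity path $\sigma_t(x)=[(1-t)\phi_1(\Omega_k)^p+t(u_k^s)^p]^{1/p}$ of Lemma~\ref{crucial_lemma_in_paths_2}: since both endpoints are nonnegative and lie in $S$, that lemma (together with the analogous convexity for the gradient and Hardy terms) bounds $\mathcal J(\sigma_t)$ by the convex combination of the endpoint values, hence by $\lambda_1(B_r(s_k))$. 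With this fix your path argument goes through and yields the same upper bound as the paper's genus argument, but the genus route is more economical precisely because it sidesteps the need to identify and connect to $\phi_1(\Omega_k)$.
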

\begin{proof}
The proof follows similarly as the proof of Theorem $1.4$ in \cite{divya_second_eigenvalue_mixed} along with the genus characterization of $\la_2(\om)$ in Theorem \ref{second_eigenvalue_characterisation}.
The sketch of the proof goes like this:
We consider the second eigenfunction $\phi_2$ and define the corresponding positive and negative regions:
\begin{align*}
    \Omega^+ = \{ x \in \Omega \setminus \{ 0 \} : \phi_2(x) > 0 \}, \quad \Omega^- = \{ x \in \Omega \setminus \{ 0 \}: \phi_2(x) < 0 \}.
\end{align*}
Using Theorem \ref{thm:nodal_domains}, we know that
\begin{align*}
    \lambda_2(\Omega) > \max \{ \lambda_1(B_{r_1}), \lambda_1(B_{r_2}) \},
\end{align*}
where $B_{r_1}$ and $B_{r_2}$ are balls satisfying $|B_{r_1}| = |\Omega^+|$ and $|B_{r_2}| = |\Omega^-|$.

\noi Then one can show that $\max \{ \lambda_1(B_{r_1}), \lambda_1(B_{r_2}) \}$ is minimized when $|B_{r_1}| = |B_{r_2}| = |\Omega|/2$.
To show that equality can be achieved asymptotically. We define a sequence of domains as:
\begin{align*}
    \Omega_k := B_r(s_k) \cup B_r(t_k),
\end{align*}
where the centres $s_k$ and $t_k$ are chosen such that $|s_k - t_k| \to \infty$ as $k \to \infty$. Taking the normalized eigenfunctions $\phi_{s_k}$ and $\phi_{t_k}$ on these balls and construct a sequence of functions \( f_k: \mathbb{S}^1 \to S \) given by 
\[
f_k(\eta_1, \eta_2) = |\eta_1|^{\frac{2-p}{p}} \eta_1 \phi_{s_k} - |\eta_2|^{\frac{2-p}{p}} \eta_2 \phi_{t_k}.
\]
Then define \( A_k = \text{Range}(f_k) \). It implies that \( A_k \) is compact, symmetric, and of genus \( \geq 2 \). Using the variational characterization of $\lambda_2(\Omega)$ in Theorem \ref{existence_of_eigenvalues}, change of variable and the inequality $|a-b|^p\leq |a|^p+|b|^p+c_p(|a|^2+|b|^2)^{\frac{p-2}{2}}|ab|$
with $a = \Phi_{s_k}(x,y) (:= \phi_{s_k}(x)-\phi_{s_k}(y))$ and $b = \Phi_{t_k}(x,y)(:= \phi_{t_k}(x)-\phi_{t_k}(y))$, we obtain
 \begin{align*}
 &\la_2(\om_k) \leq \max_{|\eta_1|^2 + |\eta_2|^2 = 1} \mathcal{J}(f_k(\eta_1, \eta_2))\\ 
 & = \max_{|\xi_1|^p+|\xi_2|^p=1} \bigg \{ \int_{\om_k}|\na (\xi_1\phi_{s_k}-\xi_2\phi_{t_k})|^p+ \int_{\mathbb{R}^N}\int_{\mathbb{R}^N} \frac{|\xi_1 \Phi_{s_k}(x,y)- \xi_2 \Phi_{t_k}(x,y)|^p}{|x-y|^{N+ps}} \\
  & \hspace{4 cm} -\mu \int_{\om_k}\frac{|\xi_1\phi_{s_k}|^p}{|x|^{p \theta}}~dx - \mu  \int_{\om_k}\frac{|\xi_2\phi_{t_k}|^p}{|x|^{p \theta}}~dx\bigg\}\\
 & \leq  \max_{|\xi_1|^p+|\xi_2|^p=1} \bigg \{ \int_{\om_k}|\na \xi_1\phi_{s_k}|^p~dx+ \int_{\om_k}|\na \xi_2\phi_{t_k}|^p ~dx + \int_{\mathbb{R}^N}\int_{\mathbb{R}^N} \frac{|\xi_1 \Phi_{s_k}(x,y)|^p}{|x-y|^{N+ps}}~dxdy\\
 &  \hspace{2.5cm} + \int_{\mathbb{R}^N}\int_{\mathbb{R}^N} \frac{|\xi_2 \Phi_{t_k}(x,y)|^p}{|x-y|^{N+ps}}~dxdy -\mu \int_{\om_k}\frac{|\xi_1\phi_{s_k}|^p}{|x|^{p \theta}}~dx - \mu  \int_{\om_k}\frac{|\xi_2\phi_{t_k}|^p}{|x|^{p \theta}}~dx\\
 &  \hspace{2.5cm} +  \int_{\mathbb{R}^N}\int_{\mathbb{R}^N} \frac{c_p(|\xi_1 \Phi_{s_k}(x,y)|^2+ |\xi_2 \Phi_{s_k}(x,y)|^2)^{\frac{p-2}{2}} |\xi_1\xi_2 \Phi_{s_k}(x,y) \Phi_{t_k}(x,y)|}{|x-y|^{N+ps}} 
 \bigg\}\\
 & \leq \la_1(B_r) + \max_{|\xi_1|^p+|\xi_2|^p=1} \int_{\mathbb{R}^N}\int_{\mathbb{R}^N}\frac{c_p(|\xi_1\Phi_{s_k}(x,y)|^2+ |\xi_2\Phi_{t_k}(x,y)|^2)^{\frac{p-2}{2}} |\xi_1\xi_2\Phi_{s_k}(x,y) \Phi_{t_k}(x,y)|}{|x-y|^{N+ps}}.
  \end{align*}
Since $\Phi_{s_k}(x,y) \Phi_{t_k}(x,y)= -\phi_{s_k}(x)\phi_{t_k}(y)- \phi_{s_k}(y)\phi_{t_k}(x)$ is nonzero only when $(x,y)\in B_r(s_k)\times B_r(t_k) \cup  B_r(t_k)\times B_r(s_k)$. And $|x-y| \geq |s_k - t_k| - 2r$ for all  $(x,y)\in B_r(s_k)\times B_r(t_k) \cup  B_r(t_k)\times B_r(s_k)$. Hence for sufficiently large $k$ such that $|s_k - t_k| - 2r > 0$, we have
\begin{align*}
	 \la_2(\om_k) & \leq  \la_1(B_r) +  \frac{2c_p}{(|s_k-t_k|-2r)^{N+ps}}\\
	 & \times\max_{|\xi_1|^p+|\xi_2|^p=1} \int_{B_r(s_k)}\int_{B_r(t_k)}(|\xi_1\Phi_{s_k}(x,y)|^2+ |\xi_2 \Phi_{t_k}(x,y)|^2)^{\frac{p-2}{2}} |\xi_1\xi_2 \Phi_{s_k}(x,y) \Phi_{t_k}(x,y)| .
\end{align*}
Since   \begin{align*}
	 2c_p\max_{|\xi_1|^p+|\xi_2|^p=1} \int_{B_r(s_k)}\int_{B_r(t_k)}(|\xi_1 \Phi_{s_k}(x,y)|^2+ |\xi_2 \Phi_{t_k}(x,y)|^2)^{\frac{p-2}{2}} |\xi_1\xi_2 \Phi_{s_k}(x,y) \Phi_{t_k}(x,y)| 
	\end{align*} is finite and $\frac{1}{(|s_k-t_k|-2r)^{N+ps}} \ra 0 $ as $k \ra \infty$. Thus $ \ds \lim_{k\ra \infty}\la_2(\om_k) \leq \la_1(B_r)$.

\end{proof}

\noindent{\bf Acknowledgment:} 
The first author is supported by the Prime Minister’s Research Fellowship(PMRF) with PMRF ID 1403226. The second author would like to thank the Science and Engineering Research Board, Department of Science and Technology, Government of India for the financial support under the grant SPG/2022/002068.\\

\bibliographystyle{plain}
\bibliography{references}
\end{document}